\numberwithin{equation}{section}
\renewcommand\labelenumi{\textup{\alph{enumi})}}
\renewcommand\theenumi\labelenumi
\newcommand\Pp{\mathds{P}}
\newcommand\nat{\mathds{N}}
\newcommand\real{{\mathds{R}}}
\newcommand\I{\mathds{1}}
\newcommand\dup{\mathrm{d}}
\newcommand\eup{\mathrm{e}}
\newcommand{\wei}{\color{green}}
\newtheorem{theorem}{Theorem}[section]
\newtheorem{lemma}[theorem]{Lemma}
\newtheorem{definition}[theorem]{Definition}
\newtheorem{proposition}[theorem]{Proposition}
\newtheorem{assumption}[theorem]{Assumption}
\newtheorem{remark}[theorem]{Remark}
\newtheorem{example}[theorem]{Example}
\numberwithin{equation}{section}
\newcommand{\be}{\begin{equation}}
\newcommand{\ee}{\end{equation}}
\newcommand{\bes}{\begin{equation*}}
\newcommand{\ees}{\end{equation*}}
\def\E{\bE}
\def\cG{\mathcal{G}}
\def\bE{\mathbb{E}}
\renewcommand{\d}{{\rm d}}
\renewcommand{\geq}{\geqslant}
\renewcommand{\leq}{\leqslant}
\def\m1{\mathbf{1}}
\title{\textbf{Finite Time Blowup of Solutions to SPDEs with Bernstein Functions of the Laplacian}}
\author{	\textbf{Chang-Song Deng} \\School of Mathematics and Statistics, Wuhan University, Wuhan 430072, China\\
Email: dengcs@whu.edu.cn
\and \textbf{Wei Liu} \\ Department of Mathematics, Shanghai Normal University, Shanghai 200234, China\\
Email: weiliu@shnu.edu.cn
\and \textbf{Erkan Nane}\\ Department of  Mathematics and Statistics, Auburn University, Alabama 36849, USA \\
Email: ezn0001@auburn.edu
}
\date{}
\begin{document}
\maketitle

\begin{abstract}
The blowup in finite time of solutions to SPDEs
\begin{equation*}
\partial_tu_t(x)=-\phi(-\Delta)u_t(x)
    +\sigma(u_t(x))\dot{\xi}(t,x),
    \quad t>0,x\in\real^d,
\end{equation*}
{\color{blue} is} investigated, where $\dot{\xi}$ could be either a white noise or a colored noise and $\phi:(0,\infty)\to (0,\infty)$ is a Bernstein function. The sufficient conditions on $\sigma$, $\dot{\xi}$ and the initial value that imply the non-existence of the global solution are discussed. The results in this paper generalise those in \cite{FLN2019}, where the fractional Laplacian case was considered, i.e. $\phi(-\Delta)=(-\Delta)^{\alpha/2}$ ($1<\alpha<2$).
\end{abstract}
{\bf Keywords:} stochastic partial differential equations, Bernstein function, blowup, space-time white noise, space colored noise.

\maketitle

\noindent

\section{Introduction} \label{Sec:Intro}

The finite time blowup or the non-existence of the global solutions for stochastic equations has been receiving interests of many scholars. For stochastic ordinary differential equations, a sufficient condition for almost sure explosion for one-dimensional equations was given by Feller in \cite{Feller1954}. The result was generalised to the multi-dimensional case by Khasminskii in \cite{Khasminskii1960}. A different approach based on the Lyapunov function to study the explosion of the solutions was discussed by Chow and Khasminskii in \cite{CK2011,CK2014}. When the driving noise is a L\'evy process, Xing and Li in \cite{XL2017} obtained some results about the explosive solutions of stochastic differential equations.

When the equations under investigation are stochastic partial differential equations, Mueller in \cite{Mueller2000} and Mueller and Sowers in \cite{MS1993} studied stochastic heat equations driven by the space-time white noise with Dirichlet boundary condition and revealed the critical value for the blowup in finite time in the pathwise sense. A series of papers by Chow \cite{Chow2009,Chow2011,Chow2012} obtained results of explosive solutions for different types of stochastic partial differential equations. Bonder and Groisman in \cite{FG2009} took a drift term into consideration and discussed the sufficient condition on the drift term that leads to the almost surely finite time blowup of solutions to stochastic reaction-diffusion equations with the space-time white noise on the spatial domain $(0,1)$. Lv and Duan in \cite{LD2015} studied similar equations with the higher dimension of the spatial domain in the moment sense and found the interplay between the drift and diffusion for the explosion. When the driving  noise is a jump process, Bao and Yuan in \cite{BY2016} investigated the blowup in the $L^p$ sense for stochastic reaction-diffusion equations. Li, Peng and Jia extended the results to stochastic partial differential equations driven by a class of L\'evy processes in \cite{LPJ2017}. When the delay effects are taken into consideration, Chow and Liu in \cite{CL2012} discussed the explosive solutions to a class stochastic functional parabolic equations of retarded type. Lv, Wang and Wang studied a class of stochastic delayed evolution equations and showed that the delay term can induce the explosion in \cite{LWW2016}. Li in \cite{Li2018} generalised the result to the case of the L\'evy noise.

More recently, stochastic partial differential equations with fractional operators have been received increasingly attentions. Foondun and Parshad studied the non-existence of global random field solutions and finite energy solutions to stochastic heat equations with the fractional Laplacian driven by white noise in \cite{FP2015}. Wang in \cite{Wang2019} investigated similar problems under different conditions. Foondun, Liu and Nane obtained some non-existence results for fractional stochastic heat equations driven by colored noise on the multi-dimensional spatial domain in \cite{FLN2019}. Asogwa, Mijena and Nane in \cite{AMN2019} considered the blowup of solutions to more general fractional stochastic partial differential equations, where the temporal operator was also fractional.

In this paper, we investigate the non-existence of global solutions to SPDEs of the following form
\begin{equation}\label{eqSec1}
\partial_tu_t(x)=-\phi(-\Delta)u_t(x)
    +\sigma(u_t(x))\dot{\xi}(t,x),
    \quad t>0,x\in\real^d,
\end{equation}
where $\dot{\xi}$ could be either a white noise or a colored noise, $\phi:(0,\infty)\to (0,\infty)$ is a Bernstein function, i.e.\ a $C^\infty$-function such that $\phi\geq 0$ and with alternating derivatives $(-1)^{n} \phi^{(n)}\leq 0$,
$n\in\nat$, and $\sigma:\real^d\rightarrow[0,\infty)$
satisfies
\begin{assumption}\label{Assumption 1} $\sigma$ is locally Lipschitz satisfying
the following growth condition: there exists some
$\gamma>0$ such that
$$
    \sigma(x)\geq|x|^{1+\gamma}\quad
    \text{for all $x\in\real^d$}.
$$
\end{assumption}

It is clear that equation \eqref{eqSec1} recovers those equations discussed in \cite{FLN2019} when $\phi(s)=s^{\alpha/2}$ with $1<\alpha<2$. In addition, by choosing
different Bernstein functions, several other different types of SPDEs can be covered by  equation \eqref{eqSec1}.

It is well known, see e.g.\ \cite[Theorem 3.2]{SSV},
that every Bernstein function enjoys a unique
L\'{e}vy--Khintchine representation
\begin{equation}\label{bern}
    \phi(s)=\phi(0+)+bs+\int_{(0,\infty)}
    \left(1-\eup^{-sx}\right)\,\nu(\dup x),\quad s>0,
\end{equation}
where $\phi(0+)\geq0$ is the killing term, $b\geq0$ is the
drift parameter, and $\nu$ is a L\'{e}vy measure,
that is, a Radon measure on $(0,\infty)$ satisfying
$\int_{(0,\infty)}
    \left(1\wedge x\right)\,\nu(\dup x)<\infty$.
Let $S_t$ be a subordinator (without killing)
whose characteristic (Laplace)
exponent is the Bernstein function $\phi$
with $\phi(0+)=0$; it is a non-decreasing L\'{e}vy
process on $[0,\infty)$ with $S_0=0$, and its
Laplace transform is of the form
$$
    \E\,\eup^{-rS_t}=\eup^{-t\phi(r)},\quad r>0,t\geq0.
$$

We will use the following assumptions on the
Bernstein function $\phi$.

\begin{assumption}\label{Assumption 2} The Bernstein function $\phi$ satisfies
$$
    \int_{0^+}\frac{\dup s}{s^\delta\phi(s)}=\infty
    \quad\text{for some $\delta<1/2$}.
$$
\end{assumption}

\begin{assumption}\label{Assumption 3} The Bernstein function $\phi$ satisfies
    $$
        \liminf_{s\rightarrow\infty}\frac{\phi(s)}{\log s}
        >0.
    $$
\end{assumption}

\begin{assumption}\label{Assumption grow} The Bernstein function $\phi$ satisfies
    $$
        \liminf_{r\to\infty}
        \frac{\phi(\lambda r)}{\phi(r)}>1
        \quad \text{for some \textup{(}hence,
        all\textup{)} $\lambda>1$}.
    $$
\end{assumption}

\begin{remark}
    Since Assumption \ref{Assumption grow} implies
    that $\phi$ grows at least like a \textup{(}fractional\textup{)}
    power \textup{(}cf. \cite[Lemma 2.3\,(i)]{DSS17}\textup{)}, Assumption \ref{Assumption grow}
    implies Assumption \ref{Assumption 3}
\end{remark}

\begin{example}
We list here some examples of  $\phi$ that
satisfy the above assumptions.
We refer the reader to \cite{SSV} for more examples of such
Bernstein functions.

\begin{enumerate}
\item   \textup{(}\textbf{Stable subordinators}\textup{)} Let $\phi(s)=s^{\alpha/2}$, $0<\alpha<2$.
    Then Assumption \ref{Assumption 2} holds iff
    $\alpha>1$, and Assumptions
    \ref{Assumption 3}-\ref{Assumption grow} always hold.

\item   \textup{(}\textbf{Relativistic stable subordinators}\textup{)} Let $\phi(s)
    =(s+m^{2/\alpha})^{\alpha/2}-m$, $0<\alpha<2$, $m>0$.
    Then Assumptions \ref{Assumption 2}-\ref{Assumption grow} always hold.

\item   \textup{(}\textbf{Gamma subordinators}\textup{)} Let $\phi(s)
    =\log(1+s)$.
    Then Assumptions \ref{Assumption 2}-\ref{Assumption 3} hold, but
    Assumption \ref{Assumption grow} does not hold.

\item   \textup{(}\textbf{Geometric stable subordinators}\textup{)} Let $\phi(s)
    =\log(1+s^{\alpha/2})$, $0<\alpha<2$.
    Then Assumption \ref{Assumption 2} hold
    iff $\alpha>1$, Assumption \ref{Assumption 3}
    always hold, and Assumption \ref{Assumption grow} does not hold.

\item   Let $\phi(s)
    =\log\left(1+s+\sqrt{(1+s)^2-1}\right)$.
    Then Assumption \ref{Assumption 3} holds, but
    Assumptions \ref{Assumption 2} and
    \ref{Assumption grow} do not hold.

\item   Let $\phi(s)
    =\log^2\left(1+s+\sqrt{(1+s)^2-1}\right)$.
    Then Assumptions \ref{Assumption 2}-\ref{Assumption grow} hold.

\item
    Let $\phi(s)=s^{\alpha/2}\log^{\beta/2}(1+s)$,
    $0<\alpha<2$, $0<\beta\leq2-\alpha$.
    Then Assumption \ref{Assumption 2} holds iff $\alpha+\beta>1$, and Assumptions
    \ref{Assumption 3}-\ref{Assumption grow} always hold.

\item
    Let $\phi(s)=s^{\alpha/2}\log^{-\beta/2}(1+s)$,
    $0<\beta\leq\alpha<2$.
    Then Assumption \ref{Assumption 2} holds iff $\alpha-\beta>1$, and Assumptions
    \ref{Assumption 3}-\ref{Assumption grow} always hold.

\item
    Let $\phi(s)=s(1+s)^{-\alpha/2}$,
    $0<\alpha<2$.
    Then Assumptions \ref{Assumption 2}-\ref{Assumption grow} always hold.
    \end{enumerate}
\end{example}

With suitable requirements imposed on $\phi$, we discuss the finite time blowup of the solutions of  \eqref{eqSec1} in $L^2$ sense where  the driving  noise is  white or colored in space. We also investigate the influence of the initial values. For such a general setting with the Bernstein function, new ideas and techniques are needed for proofs of reuslts in our paper.
Other related results
concerning Bernstein functions of the Laplacian
can be found in \cite{HIL12, HL12, KM18}.

This paper is organized  as follows. The main results are presented in Section 2, where the case of white noise goes to Subsection 2.1 and the case of colored noise is discussed in Subsection 2.2. In order to prove our results, we make some preparations in Section 3. The proofs of main theorems are given  in Section 4. Section 5 concludes this paper and discusses some possible future research directions.

Throughout the paper, we denote by $B(\zeta, R)$ the open
Euclidean ball of radius $R>0$ centered
at $\zeta\in\real^d$.

\section{Main results}

Main theorems of this paper are stated in this section which is divided into two parts for cases of the white noise and the space colored noise. In each part, we discuss two different conditions on the initial values.

\subsection{Case I: white noise} \label{SubSecWN}

Consider
\begin{equation}\label{eq:white}
    \partial_tu_t(x)=-\phi(-\Delta)u_t(x)
    +\sigma(u_t(x))\dot{W}(t,x),
    \quad t>0,x\in\real,
\end{equation}
where $\dot{W}(t,x)$ is a white noise, and
$\phi:(0,\infty)\to (0,\infty)$ is a
Bernstein function.
\par
A mild solution to \eqref{eq:white} in the sense of Walsh \cite{walsh} is given  by
\begin{equation}\label{mild}
u_t(x)=
(\cG u)_t(x)+ \int_{\real^d}\int_0^t p_{t-s}(x-y)\sigma(u_s(y))\,W(\d s\,\d y),
\end{equation}
where
\begin{equation}\label{deter}
(\cG u)_t(x):=\int_{\real^d} p_t(x-y)u_0(y)\,\d y,
\end{equation}
and $p_t(x-y)=p_t(x,y)$ denotes the heat kernel of $-\phi(-\Delta)$. If we further have
\begin{equation}\label{moments}
\sup_{t\in[0,\,T]} \E|u_t(x)|^k<\infty \quad\text{for all
$T>0$, {\color{blue}$k\geq2$}, and $x\in\real$}
\end{equation}
then we say that $u$ is a {\it random field} solution to equation \eqref{eq:white}.
A sufficient condition of the existence of solutions
when $\sigma $ is globally Lipschitz is the Dalangs
condition (see \cite{foondun-davar-09}) that boils down to the following in our case
\begin{equation}
\int_{\real}\frac{1}{1+\phi(|\xi|^2)}\,\dup\xi<\infty.
\end{equation}

Since we investigate the finite time blowup, the global existence of the solution is not expected. By using the technique of  stopping times, it is clear to see the existence and uniqueness of a local solution \cite{Davar,walsh} given Assumption \ref{Assumption 1}. So from now on, by saying $u_t(x)$ is the solution to equations investigated in this paper, we mean the local solution up to a stopping time.

Throughout this paper, the initial condition $u_0$ will always be a non-negative bounded deterministic function.

We first  assume that the initial condition is bounded below by a positive constant.

\begin{theorem}\label{theorem-white}
    Let $d=1$ and $u_t$ be be the solution to \eqref{eq:white}. Suppose that  the Assumptions \ref{Assumption 1}  and \ref{Assumption 2} hold, and that
    $\kappa:=\inf_{x\in\real^d}u_0(x)>0$. Then there
    exists a $t_0>1$ such that for all
    $t\geq t_0$ and $x\in\real$,
    $$
        \E|u_t(x)|^2=\infty.
    $$
\end{theorem}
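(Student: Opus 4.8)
\emph{Overview.} The plan is to reduce the statement to a nonlinear Volterra (renewal) inequality for the second moment, extract from Assumption~\ref{Assumption 2} a single scalar fact about the on‑diagonal heat kernel, and then combine an ODE comparison with a short self‑improvement step that both produces a finite blow‑up time and propagates it to all later times.

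\emph{Step 1: a renewal inequality.} From the mild formulation \eqref{mild} and Walsh's isometry, for $d=1$,
$$\E|u_t(x)|^2=|(\cG u)_t(x)|^2+\int_0^t\!\!\int_\real p_{t-s}(x-y)^2\,\E\!\left[\sigma(u_s(y))^2\right]\dup y\,\dup s.$$
I would then insert three elementary bounds: (i) $(\cG u)_t(x)\ge\kappa$, since $u_0\ge\kappa$ and $\int_\real p_t(x-y)\,\dup y=1$ ($p_t$ is the transition density of the symmetric subordinate Brownian motion driven by $S_t$); (ii) $\E[\sigma(u_s(y))^2]\ge\E[|u_s(y)|^{2(1+\gamma)}]\ge(\E|u_s(y)|^2)^{1+\gamma}$, by Assumption~\ref{Assumption 1} and Jensen; (iii) $\int_\real p_r(x-y)^2\,\dup y=p_{2r}(0)$, by symmetry of $p_r$ and Chapman--Kolmogorov. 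Writing $h(t):=\inf_{x\in\real}\E|u_t(x)|^2$ (so $h\ge\kappa^2$), this yields
$$h(t)\ \ge\ \kappa^2+\int_0^t p_{2(t-s)}(0)\,h(s)^{1+\gamma}\,\dup s .$$
(For the only-local solution this should be written for the stopped process and passed to the limit by Fatou; I regard this and Step~2 as the two points really requiring care.)

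\emph{Step 2: the heat‑kernel input.} By Fourier inversion $p_{2r}(0)=\frac1{2\pi}\int_0^\infty\lambda^{-1/2}\eup^{-2r\phi(\lambda)}\,\dup\lambda$, which is non‑increasing in $r$; truncating the integral at $\lambda_*=\phi^{-1}(1/(2r))$ gives $p_{2r}(0)\ge\frac{\eup^{-1}}{\pi}\sqrt{\phi^{-1}(1/(2r))}$. I claim $\sup_{T>0}T\,p_{2T}(0)=\infty$. Indeed, if $T\sqrt{\phi^{-1}(1/(2T))}\le M$ for all $T$, then $\phi^{-1}(v)\le 4M^2v^2$ for all $v>0$, i.e.\ $\phi(\mu)\ge(2M)^{-1}\sqrt\mu$ for all $\mu>0$; but then $\int_{0^+}\mu^{-\delta}\phi(\mu)^{-1}\,\dup\mu\le 2M\int_{0^+}\mu^{-\delta-1/2}\,\dup\mu<\infty$ for \emph{every} $\delta<1/2$, contradicting Assumption~\ref{Assumption 2}. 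This is exactly where the strict inequality $\delta<1/2$ (rather than $\delta\le1/2$) is used; for borderline Bernstein functions such as $\phi(s)=s^{1/2}$, Assumption~\ref{Assumption 2} fails and $T\,p_{2T}(0)$ stays bounded. This scalar fact is presumably isolated as a lemma in Section~3.

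\emph{Step 3: finite‑time blow‑up and persistence.} By Step~2 fix $T$ with $T\,p_{2T}(0)\ge(\gamma\kappa^{2\gamma})^{-1}$. For $0\le s\le t\le T$, monotonicity gives $p_{2(t-s)}(0)\ge p_{2T}(0)$, so on $[0,T]$ one has $h(t)\ge\kappa^2+p_{2T}(0)\int_0^t h(s)^{1+\gamma}\,\dup s=:z(t)$. Then $h\ge z$ and $z'\ge p_{2T}(0)z^{1+\gamma}$, $z(0)=\kappa^2$, whence $z(t)^{-\gamma}\le\kappa^{-2\gamma}-\gamma p_{2T}(0)t$; since the right side vanishes at $T^\ast:=(\gamma\kappa^{2\gamma}p_{2T}(0))^{-1}\le T$, the map $t\mapsto\int_0^t h^{1+\gamma}$ must be infinite by some $t_\dagger\le T^\ast<\infty$. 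Finally, for every $t>t_\dagger$, using $p_{2(t-s)}(0)\ge p_{2t}(0)>0$,
$$h(t)\ \ge\ \int_0^t p_{2(t-s)}(0)\,h(s)^{1+\gamma}\,\dup s\ \ge\ p_{2t}(0)\int_0^{t_\dagger}h(s)^{1+\gamma}\,\dup s\ =\ \infty .$$
Hence $\E|u_t(x)|^2\ge h(t)=\infty$ for all $x\in\real$ and all $t>t_\dagger$, and taking $t_0:=\max(t_\dagger,1)+1>1$ completes the proof. The ODE comparison and the persistence trick $p_{2(t-s)}(0)\ge p_{2t}(0)$ are routine; the substantive content is Steps~1 and~2.
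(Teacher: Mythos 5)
Your argument is correct, and while it follows the paper's overall strategy (renewal inequality from the Walsh isometry, a lower bound on the on-diagonal heat kernel, ODE comparison), it replaces the paper's technical core with a genuinely more elementary device. The paper establishes a general comparison lemma (Lemma~\ref{explode}): it substitutes $h(t)=g(t)\bigl[\phi^{-1}(1/(2t))\bigr]^{-1/2}$, solves a time-varying Bernoulli ODE, and proves divergence of the integral $\int_1^\infty \bigl[\phi^{-1}(1/(2s))\bigr]^{(1+\varepsilon)/2}\,\dup s$ by a change of variables, Fubini and integration by parts, with a two-case split according to whether $\varepsilon\le 1-2\delta$. You instead extract from Assumption~\ref{Assumption 2} the single scalar fact $\sup_{T}T\,p_{2T}(0)=\infty$ via a clean contradiction (if $T\sqrt{\phi^{-1}(1/(2T))}$ were bounded then $\phi(\mu)\gtrsim\sqrt\mu$ near $0$, forcing convergence of the defining integral of Assumption~\ref{Assumption 2} for every $\delta<1/2$), then run a constant-coefficient ODE comparison on a single compact interval $[0,T]$ chosen so that blow-up occurs before $T$, followed by a monotonicity/persistence step $p_{2(t-s)}(0)\ge p_{2t}(0)$ to propagate $h=\infty$ to all later times. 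This avoids the global in-time ODE analysis and the $\varepsilon\le 1-2\delta$ case distinction entirely, at the cost of giving a less explicit $t_0$. Your observations that $\int_\real p_r(x-y)^2\,\dup y=p_{2r}(0)$ (used in place of Lemma~\ref{heat}) and the truncation lower bound $p_{2r}(0)\ge\tfrac{\eup^{-1}}{\pi}\sqrt{\phi^{-1}(1/(2r))}$ are exactly the content of the paper's Lemma~\ref{heat} (second, Plancherel-based proof) specialised to $d=1$, so no ground is lost there. The only minor caveats are the same ones implicit in the paper: finiteness/definiteness of $p_{2t}(0)$ and of $\phi^{-1}$ on $(0,\infty)$, which are guaranteed under the standing Dalang-type condition, and the treatment of the local-in-time solution via stopping and Fatou, which you correctly flag.
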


\begin{remark}
The above result states that provided that the initial function is bounded below, the second moment will eventually cease to be finite  for equations driven
by \textup{(}space-time\textup{)} white noise
\end{remark}
Next, we remove the assumption that the initial condition is
bounded below by a constant, i.e. $\kappa > 0$, and
impose the following requirement on the initial values instead.

Let
\begin{equation}\label{eqK0}
K_{u_0}:=\int_{B(0,\,1)}u_0(x)\,\d x.
\end{equation}
We have taken $B(0,\,1)$ as a matter of convenience.

\begin{theorem}\label{theorem2-white}
   Suppose that $u_t$ is a solution of \eqref{eq:white},and  Assumptions \ref{Assumption 1} and \ref{Assumption 3} hold. Then there exists
    $t_0>0$ and $K>0$ such that for
    all $t\geq t_0$ and $x\in\real^d$
    $$
        \E|u_t(x)|^2=\infty\quad
        \text{whenever $K_{u_0}\geq K$}.
    $$
\end{theorem}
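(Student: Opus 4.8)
The plan is to adapt the moment-blowup strategy used for the fractional Laplacian in \cite{FLN2019} to the general Bernstein-function setting, replacing the explicit fractional heat-kernel estimates with the subordination representation $p_t(x,y) = \int_0^\infty q^{(\text{Brownian})}_s(x,y)\,\Pp(S_t \in \dup s)$ of the heat kernel of $-\phi(-\Delta)$. First I would set up the standard reduction: fix $x$, apply Jensen/Minkowski and the Walsh isometry to the mild formulation \eqref{mild}, and obtain a renewal-type inequality for $F(t) := \Ee|u_t(x)|^2$ of the schematic form $F(t) \geq (\cG u)_t(x)^2 + c\int_0^t \big(\!\int p_{t-s}(x-y)^2\,\dup y\big)\,F(s)^{1+\gamma}\,\dup s$, where Assumption \ref{Assumption 1} (the $|u|^{1+\gamma}$ lower bound on $\sigma$, together with $\Ee|u_s(y)|^{2(1+\gamma)} \geq (\Ee|u_s(y)|^2)^{1+\gamma}$ by Jensen) produces the superlinear nonlinearity. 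One then argues by contradiction: if $\Ee|u_t(x)|^2 < \infty$ for all $t$, the right-hand side must stay finite, but the superlinear feedback forces blowup once the ``mass'' entering the integral is large enough.

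The key difference from the bounded-below case (Theorem \ref{theorem-white}) is that here we only control $K_{u_0} = \int_{B(0,1)} u_0$, so the lower bound on the linear term $(\cG u)_t(x) = \int p_t(x-y) u_0(y)\,\dup y \geq \int_{B(0,1)} p_t(x-y) u_0(y)\,\dup y$ must be expressed through $K_{u_0}$ times an infimum of the heat kernel over $B(x,1)\times B(0,1)$. Here Assumption \ref{Assumption 3} ($\liminf_{s\to\infty}\phi(s)/\log s > 0$) enters: via the subordination formula and the Gaussian lower bound $q_s(x,y) \geq c s^{-d/2} e^{-|x-y|^2/(cs)}$, one needs $\int_0^\infty s^{-d/2} e^{-R^2/(cs)}\Pp(S_t\in\dup s)$ to be bounded below by something that does not decay too fast in $t$; the condition $\phi(s)\gtrsim \log s$ controls the small-time (equivalently, the $r\to\infty$ Laplace-exponent) behaviour of the subordinator and guarantees the kernel has enough mass near the diagonal — essentially $p_t(x,y)\gtrsim t^{-\text{const}}$ or a comparable lower bound on bounded spatial regions, uniformly for $t$ in compact sets away from $0$. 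I would extract the precise quantitative statement (a lower bound of the form $\inf_{|x-y|\leq 2} p_t(x,y) \geq g(t)$ with $\int^\infty$-type integrability properties of $g$) from the preparatory Section 3 results, which presumably contain exactly such subordination-based kernel estimates.

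With the linear lower bound $(\cG u)_t(x)^2 \gtrsim K_{u_0}^2\, g(t)^2$ in hand, the argument proceeds: for $t \geq t_0$ the quantity $F(t)$ is at least a constant multiple of $K_{u_0}^2 g(t)^2$, and feeding this into the renewal inequality and iterating (a Gronwall-type / bootstrap argument on the superlinear integral inequality, as in \cite[proof of the analogous theorem]{FLN2019}) shows that if $K_{u_0}$ exceeds a threshold $K$ depending on $\gamma$, $\phi$ and the noise constant, then $F$ cannot remain finite — there is a finite time beyond which the integral inequality has no finite solution. The standard device is: suppose $F(t) \le \infty$ on $[0,T]$; derive that $F$ dominates the solution of an ODE $y' = c\, m(t)\, y^{1+\gamma}$ (where $m(t)$ comes from $\int p_{t-s}^2$), whose solutions blow up in finite time once $y(t_0)$ is large, and $y(t_0) \gtrsim K_{u_0}^2$ is forced large by choosing $K_{u_0}\ge K$.

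The main obstacle I expect is obtaining the right two-sided control of the heat kernel $p_t(x,y)$ purely from the Bernstein-function assumptions — specifically, (i) the spatial $L^2$ quantity $\int_{\real} p_t(x-y)^2\,\dup y = \int_{\real}\frac{1}{2\pi}e^{-t\phi(\xi^2)\cdot 0}\ldots$ — more precisely $\int p_t^2 = \frac{1}{2\pi}\int_\real e^{-2t\phi(\xi^2)}\,\dup\xi$ via Plancherel — must be shown finite and estimated from below for $t$ in a compact range, and (ii) the near-diagonal lower bound on $p_t$ needed for the $(\cG u)_t$ term. Both require translating the growth/regularity conditions on $\phi$ (Assumptions \ref{Assumption 2} and \ref{Assumption 3}) into kernel asymptotics through the subordinator $S_t$; in the fractional case these are explicit scaling computations, whereas here one must use L\'evy--Khintchine structure and the Laplace-transform identity $\Ee e^{-rS_t} = e^{-t\phi(r)}$ together with Tauberian-type arguments. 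I would lean on the estimates assembled in Section 3 to black-box this, and devote the proof body to the renewal-inequality bootstrap, which is then essentially formal.
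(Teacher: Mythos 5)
Your general strategy (reduce to a superlinear renewal inequality via the Walsh isometry and the $\sigma(u)\geq |u|^{1+\gamma}$ hypothesis, then force blowup once $K_{u_0}$ is large) is the right one, and you correctly identify the need to control the kernel via the subordination/L\'evy--Khintchine structure. But there is a structural gap in how you set up the renewal inequality, and it matters.

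You propose to fix $x$ and run the bootstrap on $F(t):=\Ee|u_t(x)|^2$. The problem is that after applying the Walsh isometry once, the nonlinear term is $\int_0^t\int p_{t-s}(x-y)^2(\Ee|u_s(y)|^2)^{1+\gamma}\,\dup y\,\dup s$, which involves $\Ee|u_s(y)|^2$ for a whole range of $y$, not the single value $y=x$. With only $K_{u_0}$ (and no pointwise lower bound on $u_0$) you cannot take $\inf_{x\in\real^d}$ as in Theorem~\ref{theorem-white}, since $(\cG u)_t(x)\to 0$ as $|x|\to\infty$. So the inequality for fixed $x$ does not close. The paper resolves this with a \emph{localize-then-propagate} two-stage argument (Proposition~\ref{local} and the short proof of Theorem~\ref{theorem2-white}): first work with the localized quantity $G(t):=\inf_{x\in B(0,1)}\Ee|u_{t+t_0}(x)|^2$, which does satisfy a closed renewal inequality $G(t)\geq c_1K_{u_0}^2+c_2\bigl[\phi^{-1}(1/(2t_0))\bigr]^d\int_0^tG(s)^{1+\gamma}\,\dup s$ because the spatial integral is restricted to $B(0,1)$ and the heat kernel between points of $B(0,1)$ is bounded below (Lemma~\ref{lower}, enabled by the time-shift $t_0$ from Proposition~\ref{dw3d}); this gives $\inf_{x\in B(0,1)}\Ee|u_t(x)|^2=\infty$ for $t\geq t_0$. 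Then, for an \emph{arbitrary} $x\in\real^d$, one returns once more to the Walsh isometry and restricts the time integral to $[t_0,t]$ and the spatial integral to $B(0,1)$: the integrand is already infinite, so $\Ee|u_t(x)|^2=\infty$. Your proposal contains no counterpart of this localization or of the propagation step, and without them the bootstrap cannot be made rigorous.

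A smaller inaccuracy: you describe Assumption~\ref{Assumption 3} as controlling small-time / near-diagonal mass. In the paper it is used in the opposite regime: via Lemma~\ref{moment} it guarantees $\Ee S_t^{-d/2}\to 0$ as $t\to\infty$, i.e.\ $p_{t_0}(0)<1$ for $t_0$ large, which is the hypothesis of the factorization Lemma~\ref{factorlower}. That lemma, combined with the semigroup property, is how Proposition~\ref{dw3d} extracts the lower bound $(\cG u)_{t+t_0}(x)\geq cK_{u_0}$ on $B(0,1)$, and it is precisely this deterministic bound (not a kernel estimate for small $t$) that seeds the localized renewal inequality.
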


\subsection{Case II: colored noise} \label{SubSecCdN}
In this section, we study equations driven by  space colored noise {\color{blue} $\dot F(t,x)$}. Consider
\begin{equation}\label{eq:colored}
\partial_t u_t(x)=
 -\phi(-\Delta)u_t(x)+ \sigma (u_t(x)) \dot F(t,x),
 \quad t>0,x\in \real^d.
\end{equation}
The corresponding mild solution in the sense of Walsh \cite{walsh} is given by
\begin{equation}\label{mildcod}
u_t(x)=
(\cG u)_t(x)+ \int_{\real^d}\int_0^t p_{t-s}(x-y)\sigma(u_s(y))\,F(\d s\,\d y).
\end{equation}
Here, we are interested in the random field solution as well. But we need to impose some conditions on the noise term:
\begin{align*}
\E[\dot F(s,x)\dot F(t,y)]=\delta_0(t-s)f(x,\,y),
\end{align*}
where ${\color{blue} 0 <} f(x,\,y)\leq g(x-y)$ and $g$ is a locally integrable function on $\real^d$ with a possible singularity at $0$ and  satisfies
\begin{equation*}
\int_{\real^d}\frac{\hat{g}(\xi)}{1+\phi(|\xi|^2)}\,
\d \xi<\infty,
\end{equation*}
where $\hat{g}$ denotes the Fourier transform of $g$.
Since the $\sigma$ in this paper is allowed to grow polynomially, the equations discussed here only have unique local random field solutions as we have already mentioned at the beginning of Subsection \ref{SubSecWN}.
\par

For equations with colored noise, our results need more conditions on the spatial correlation of the noise.
\begin{assumption}\label{color}
There exists $R>0$ such that
\begin{equation*}
K_{R,f}:=\inf_{x,\,y\in B(0,\,R)}f(x,\,y)>0.
\end{equation*}
\end{assumption}

This assumption is quite mild. It is not hard to see that all the following  examples of $f(x,\,y)$ satisfy
Assumption \ref{color}.
\begin{itemize}
\item Riesz kernel: \begin{equation*}f(x,\,y)=g(x-y)=\frac{1}{|x-y|^\beta}.
    \end{equation*}
    Here we require $0<\beta<d$ and, since $\hat{g}(\xi)=C\,|\xi|^{\beta-d}$ for
    some constant $C=C(\beta,d)>0$, we also require that
  \begin{equation}\label{riesz-bound-dalang-cond}
    \int_0^\infty\frac{r^{\beta-1}}{1+\phi(r^2)}\,
    \dup r<\infty.
   \end{equation}
    When $\phi(s)=s^{\alpha/2}$ with $\alpha\in(0,2)$,
    the inequality \eqref{riesz-bound-dalang-cond} holds
    if $0<\beta< d \wedge \alpha.$

\item The exponential-type kernel: $f(x,\,y)=\exp[-(x\cdot y)].$

\item The Ornstein-Uhlenbeck-type kernel: $f(x,\,y)=\exp[-|x-y|^\alpha]$ with $\alpha\in(0,\,2].$

\item Poisson kernel: $$f(x,\,y)=g(x-y)=\left(\frac{1}{|x-y|^2+1}\right)^{(d+1)/2}.$$
In this case, since $\hat{g}(\xi)=C\,\eup^{-|\xi|}$ for
some constant $C=C(d)>0$, the Dalang condition boils down to the requirement that
$$
\int_{0}^\infty \frac{r^{d-1}\eup^{-r}}{1+\phi(r^2)}\,
\dup r<\infty.
$$

\item Cauchy kernel: $$f(x,\,y)=\sum_{j=1}^d\left(\frac{1}{1+(x_j-y_j)^2}\right).$$
\end{itemize}

\begin{theorem}\label{theo2}
Let $u_t$ be the solution to \eqref{eq:colored} and suppose that Assumption \ref{color} holds. For any $t_0>0$,
there exists a positive number
$\kappa_0=\kappa_0(t_0)$ such that
for all $t\geq t_0$ and $ x \in \real^d$,
\begin{equation*}
\E|u_t(x)|^2=\infty, \quad \text{whenever
$\kappa:=\inf_{x\in\real^d}u_0(x)\geq \kappa_0$}.
\end{equation*}
\end{theorem}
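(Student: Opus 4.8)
The plan is to derive a nonlinear integral inequality for the second moment $M_t(x):=\E|u_t(x)|^2$ using the mild formulation \eqref{mildcod} and It\^o's isometry for the colored noise, and then show this inequality forces $M_t(x)=\infty$ for $t$ large once the initial data is sufficiently big. First I would apply the isometry to the stochastic-integral part of \eqref{mildcod}: since $\E[\dot F(s,x)\dot F(t,y)]=\delta_0(t-s)f(x,y)$ and $f\geq 0$, we get
\begin{align*}
\E|u_t(x)|^2
&\geq \frac12\bigl((\cG u)_t(x)\bigr)^2
   + \int_0^t\!\!\int_{\real^d}\!\!\int_{\real^d}
   p_{t-s}(x-y)p_{t-s}(x-z)f(y,z)\,\E[\sigma(u_s(y))\sigma(u_s(z))]\,\d y\,\d z\,\d s .
\end{align*}
Using Assumption \ref{Assumption 1} ($\sigma(x)\geq |x|^{1+\gamma}$), the Cauchy--Schwarz-type bound $\E[\sigma(u_s(y))\sigma(u_s(z))]\geq \E[|u_s(y)|^{1+\gamma}|u_s(z)|^{1+\gamma}]$, and then restricting both spatial integrals to $y,z\in B(x+\cdot,R)$ where Assumption \ref{color} gives $f(y,z)\geq K_{R,f}>0$, I would reduce the double spatial integral to something controlled from below by a power of $M_s$ at nearby points, after an application of Jensen's inequality in the form $\E[|u_s(y)|^{1+\gamma}]\geq (\E|u_s(y)|^2)^{(1+\gamma)/2}$ (valid since $1+\gamma$ could be $\le$ or $\ge 2$; one splits cases or uses that the relevant exponent is handled by Jensen together with the growth condition — this is the point where one must be slightly careful).

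The second step is to pass from a pointwise-in-$x$ inequality to a spatially-averaged quantity. I would integrate against a test function supported in a ball, or evaluate at a fixed $x$ and use the lower bound $\kappa:=\inf_x u_0(x)>0$ to control the deterministic term: $(\cG u)_t(x)=\int p_t(x-y)u_0(y)\,\d y\geq \kappa$ because $p_t$ is a probability density (the subordinated heat kernel is conservative, $\int p_t = 1$). This gives a clean lower bound $M_t(x)\geq \kappa^2/2$ for all $t,x$ to seed the iteration. Then, feeding the constant lower bound back into the integral inequality and using that $\int_{B(0,R)}p_{t-s}(w)\,\d w$ is bounded below for, say, $s\in[t-1,t]$ and $t\ge t_0$ — this is where Assumption \ref{color}'s radius $R$ interacts with the heat kernel mass — one obtains
\[
M_t(x)\ \geq\ \frac{\kappa^2}{2} + C\,K_{R,f}\int_{t_0}^{t}\Bigl(\inf_{|x-y|\le R'} M_s(y)\Bigr)^{1+\gamma}\,\d s
\]
for suitable constants. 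Setting $m(t):=\inf_{t_0\le s\le t,\ x} M_s(x)$ (or a localized infimum), this yields $m(t)\geq c_1\kappa^2 + c_2\int_{t_0}^t m(s)^{1+\gamma}\,\d s$ with $c_1,c_2>0$ depending on $t_0$ through the heat-kernel mass but \emph{not} on $\kappa$.

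The final step is the ODE comparison: the function $y(t)$ solving $y'=c_2 y^{1+\gamma}$, $y(t_0)=c_1\kappa^2$, blows up at time $t_0 + \frac{1}{\gamma c_2 (c_1\kappa^2)^{\gamma}}$, which is finite and, crucially, tends to $t_0$ as $\kappa\to\infty$. Hence by choosing $\kappa_0=\kappa_0(t_0)$ large enough that this blowup time is less than any prescribed $t\ge t_0$, a standard comparison argument for integral inequalities (if $M_t$ were finite it would dominate $y(t)$, contradiction) forces $\E|u_t(x)|^2=\infty$ for all $t\ge t_0$ and all $x$, which is exactly the claim — here the quantifier order ``for any $t_0>0$ there is $\kappa_0(t_0)$'' comes out naturally because the constants $c_1,c_2$ are the ones controlling the blowup time. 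I expect the main obstacle to be the second step: making the spatial localization rigorous so that the infimum $\inf_{|x-y|\le R'}M_s(y)$ genuinely closes the loop — i.e., ensuring the lower bound produced at point $x$ after integrating $p_{t-s}$ over a ball is again an infimum over a ball of comparable radius, uniformly in $x$ — which requires a uniform-in-$x$ lower bound on $\int_{B(x,R)}p_{t-s}(x-y)\,\d y$; by translation invariance of $p_t$ this is $\int_{B(0,R)}p_{t-s}(w)\,\d w$, and one needs this bounded below for $s$ in an interval of fixed length below $t$, which follows from continuity and positivity of the (subordinated) heat kernel but should be stated carefully, possibly invoking a preparatory lemma from Section 3.
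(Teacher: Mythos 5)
There is a genuine gap. Your plan is to close the nonlinear renewal inequality at the level of the second moment $M_t(x)=\E|u_t(x)|^2$, but with a colored noise the Walsh isometry produces the two-point quantity $\E[\sigma(u_s(y))\sigma(u_s(z))]$ for $y\neq z$, and there is no valid lower bound for $\E[|u_s(y)|^{1+\gamma}|u_s(z)|^{1+\gamma}]$ in terms of $M_s(y)$ and $M_s(z)$. The step you sketch implicitly factors the expectation, i.e.\ $\E[|u_s(y)|^{1+\gamma}|u_s(z)|^{1+\gamma}]\geq\E[|u_s(y)|^{1+\gamma}]\,\E[|u_s(z)|^{1+\gamma}]$, which requires positive association and simply does not hold in general for the dependent random variables $u_s(y),u_s(z)$; a negative correlation makes it false. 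Moreover the single-point Jensen inequality you quote, $\E[|u_s(y)|^{1+\gamma}]\geq(\E|u_s(y)|^2)^{(1+\gamma)/2}$, is reversed when $\gamma<1$ (the map $r\mapsto r^{(1+\gamma)/2}$ is then concave), so even the diagonal case does not survive. This is exactly the obstruction the paper flags when it says the renewal inequality for the second moment is not applicable in the colored case, and the loop cannot be closed by the localization considerations you identify as the main obstacle.

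The paper's route is to change the monitored quantity: set
\[
G(t):=\inf_{x,y\in B(0,R)}\E\bigl|u_t(x)u_t(y)\bigr|.
\]
The isometry gives the product form for $\E[u_t(x)u_t(y)]$, and since $\sigma\geq0$ and $\sigma(r)\geq|r|^{1+\gamma}$, Jensen with the genuinely convex map $r\mapsto r^{1+\gamma}$ applied to the \emph{scalar} $|u_s(z)u_s(w)|$ yields
\[
\E\bigl[\sigma(u_s(z))\sigma(u_s(w))\bigr]\geq\E\bigl[|u_s(z)u_s(w)|^{1+\gamma}\bigr]\geq\bigl(\E|u_s(z)u_s(w)|\bigr)^{1+\gamma},
\]
which is exactly the missing step — it only works because the object being raised to the power $1+\gamma$ is the two-point quantity, not a single second moment. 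Combined with Proposition~\ref{prop-kernel} (the uniform kernel--correlation lower bound, valid for $t$ up to the fixed time scale $\Phi(R/2)$), one gets the closed inequality $G(t)\geq\kappa^2+cK_{R,f}\int_0^tG(s)^{1+\gamma}\,\d s$ on $(0,\Phi(R/2)]$, and Lemma~\ref{comparison} then forces $G(t)=\infty$ for $t\geq t_0$ once $\kappa$ is large (this is Proposition~\ref{prop-colored}). Finally, $\E|u_t(x)|^2$ itself is the $y=x$ case of the same mild-solution identity, whose stochastic term for $s\geq t_0$ contains the now-infinite $(\E|u_s(y)u_s(w)|)^{1+\gamma}$ multiplied by strictly positive kernels and $f$, hence $\E|u_t(x)|^2=\infty$. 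Your proposal, lacking the switch to the two-point functional, cannot be repaired along the lines you describe.
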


Similar to the case of white noise in Section \ref{SubSecWN}, we remove the assumption that the initial condition is bounded below by a constant now.

{
\begin{theorem}\label{energy-colored}
Let $u_t$ be the solution to \eqref{eq:colored}. Then, under Assumptions \ref{color}, there exists a $t_0\geq 0$ such that for all $t\geq t_0$ and $x\in \real^d$,
\begin{equation*}
\E|u_t(x)|^2=\infty\quad\text{whenever}\quad K_{u_0}\geq K,
\end{equation*}
where $K_{u_0}$ has the same format as that in \eqref{eqK0} with $B(0,\,1)\subset \real^d $ in this case and $K$ is a positive constant.
\end{theorem}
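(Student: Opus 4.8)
The plan is to test the equation against $\mathbf 1_{B(0,R)}$, where $B(0,R)$ is the ball furnished by Assumption~\ref{color}, so as to collapse the problem to a scalar Fujita-type inequality; this is the same strategy that underlies the white-noise theorems, but here the spatial correlation $f$ has to be controlled, and the spatial averaging is exactly what does it. Set $D:=B(0,R)$ and $Y_t:=\int_D u_t(y)\,\d y$. Integrating the mild formula \eqref{mildcod} over $x\in D$ and using the stochastic Fubini theorem,
\[ Y_t=\int_D(\cG u)_t(y)\,\d y+\int_0^t\!\!\int_{\real^d}\psi_{t-s}(y)\,\sigma(u_s(y))\,F(\d s\,\d y),\qquad \psi_r(y):=\int_D p_r(x-y)\,\d x . \]
Applying the Walsh isometry and using the mean-zero property of the stochastic integral, then restricting the $(y,z)$-integration to $D\times D$ (where $f\ge K_{R,f}>0$) and using $\sigma\ge0$, one obtains
\[ \E\,Y_t^{2}\ \ge\ \Big(\int_D(\cG u)_t(y)\,\d y\Big)^{2}+K_{R,f}\int_0^t\E\!\left[\Big(\int_D\psi_{t-s}(y)\,\sigma(u_s(y))\,\d y\Big)^{2}\right]\d s . \]
The point is that after averaging in space the unpleasant double integral $\int\!\!\int p_{t-s}\,p_{t-s}\,f\,\E[\sigma\sigma]$ has become a perfect square, so the growth of $\sigma$ can now be pushed through Jensen's inequality with no cross terms to fight. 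Using a heat-kernel lower bound $\psi_r(y)\ge q_0>0$ for $y\in D$, $r\in(0,t_0]$ (a consequence of the strict positivity and continuity of $p_r$, which I would isolate as a short lemma), then Assumption~\ref{Assumption 1} and Jensen in $y$ over $D$,
\[ \Big(\int_D\psi_{t-s}(y)\,\sigma(u_s(y))\,\d y\Big)^{2}\ \ge\ q_0^{2}|D|^{-2\gamma}\Big(\int_D|u_s(y)|\,\d y\Big)^{2(1+\gamma)}\ \ge\ q_0^{2}|D|^{-2\gamma}\big(Y_s^{2}\big)^{1+\gamma}; \]
taking expectations and applying Jensen once more ($\E[(Y_s^{2})^{1+\gamma}]\ge(\E\,Y_s^{2})^{1+\gamma}$) one arrives, with $c_1:=K_{R,f}q_0^{2}|D|^{-2\gamma}>0$, at the renewal inequality
\[ \E\,Y_t^{2}\ \ge\ h(t)+c_1\int_0^t\big(\E\,Y_s^{2}\big)^{1+\gamma}\,\d s,\qquad h(t):=\Big(\int_D(\cG u)_t(y)\,\d y\Big)^{2}. \]

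Now fix $\varepsilon_0=1$ and $t_0=2$. Since $u_0\ge0$, $\int_D(\cG u)_t(y)\,\d y=\int_{\real^d}u_0(w)\,\psi_t(w)\,\d w\ge\big(\inf_{w\in B(0,1)}\psi_t(w)\big)K_{u_0}$, and $\inf_{t\in[1,2]}\inf_{w\in B(0,1)}\psi_t(w)>0$ (again by positivity of $p_t$), so $h(t)\ge c_2K_{u_0}^{2}$ on $[1,2]$ for some constant $c_2>0$. On $[1,2]$ we then have $\E\,Y_t^{2}\ge c_2K_{u_0}^{2}+c_1\int_{1}^{t}(\E\,Y_s^{2})^{1+\gamma}\,\d s$; by the standard nonlinear Gronwall comparison (the right-hand side $G$ satisfies $G'\ge c_1G^{1+\gamma}$ with $G(1)=c_2K_{u_0}^{2}$), $\E\,Y_t^{2}$ must become infinite before time $1+(\gamma c_1)^{-1}(c_2K_{u_0}^{2})^{-\gamma}$. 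Choosing $K$ so large that for $K_{u_0}\ge K$ this time is $<t_0$ contradicts the finiteness of the second moments that a random field solution on $[0,t_0]$ would have, so the (local) solution cannot be continued to a random field solution up to $t_0$. Since $\E\,Y_t^{2}\le|D|\int_D\E|u_t(y)|^{2}\,\d y$, this forces $\E|u_t(x)|^{2}=\infty$ for some $x$ and some $t\le t_0$, hence for all $t\ge t_0$ and all $x\in\real^d$. (As usual, the displayed identities are first derived for the solution stopped at $\tau_m:=\inf\{t:\sup_x|u_t(x)|\ge m\}$, with the bounds passed to the limit $m\to\infty$ afterwards; equivalently one argues by contradiction, assuming that a random field solution on $[0,t_0]$ exists.)

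The main obstacle — and the one place where colored noise differs in substance from the white-noise theorems — is the first step: one has to arrange matters so that $f$ enters only through its uniform lower bound $K_{R,f}$ on $B(0,R)\times B(0,R)$ while the surviving cross terms reassemble into a square, which is exactly what integrating against $\mathbf 1_{B(0,R)}$ achieves. The cost is that the forcing term $h$ is no longer uniform in $x$ — it is not even attached to a single point — so $h(t)\ge c_2K_{u_0}^{2}$ holds only on a $t$-interval bounded away from $0$; this is why the statement asserts merely the existence of some $t_0$, in contrast with Theorem~\ref{theo2}, where $\inf u_0=\kappa>0$ yields the uniform bound $h(t)\ge(\kappa|D|)^{2}$ for every $t>0$.
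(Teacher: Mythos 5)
Your route is genuinely different from the paper's. The paper proves the theorem through Proposition~\ref{prop5.2}, which drives the nonlinear renewal inequality on $G(s)=\inf_{x,y\in B(0,R)}\E|u_{s+t_0}(x)u_{s+t_0}(y)|$ and therefore needs the \emph{pointwise} kernel lower bounds of Lemma~\ref{lower} and Proposition~\ref{prop-kernel} (hence Assumption~\ref{Assumption grow}), together with the free-evolution estimate of Proposition~\ref{dw3d} (hence Assumption~\ref{Assumption 3}). You instead integrate the mild formula against $\1_{B(0,R)}$ and track the single scalar $\E Y_t^2=\int_D\int_D\E[u_t(y)u_t(z)]\,\d y\,\d z$; the covariance $f$ then enters only through its lower bound $K_{R,f}$ and the quadratic form collapses to $\E\bigl[\bigl(\int_D\psi_{t-s}\sigma(u_s)\bigr)^2\bigr]$, a perfect square. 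The only heat-kernel input you need is that $\psi_r(y)=\int_D p_r(x-y)\,\d x\ge q_0>0$ for $y\in D$, $r\in(0,t_0]$, which is a soft positivity fact requiring no growth/scaling assumption on $\phi$. So your argument, once completed, would actually establish the theorem under \emph{weaker} hypotheses than the paper's proof silently uses. Two places, however, need to be tightened.

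First, the bound $\psi_r(y)\ge q_0$ is less immediate than ``positivity and continuity of $p_r$'' suggests: the infimum is over the non-compact set $D\times(0,t_0]$, and both $y\to\partial D$ with $r$ fixed and $r\to 0$ with $y$ fixed leave a bounded lower limit, but the joint limit has to be controlled. A clean way is geometric: for every $y\in D$ the shifted ball $D-y=B(-y,R)$ contains the truncated cone $\{z: z\cdot y\le -\tfrac12|y||z|,\ |z|<R\}$, so by isotropy of $B_{S_r}$ one gets $\psi_r(y)\ge c_d\,\Pp(|B_{S_r}|<R)\ge c_d\,\Pp(|B_{S_{t_0}}|<R)=:q_0>0$, uniformly in $y\in D$ and $r\le t_0$. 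This should be stated and proved as a lemma, since none of the paper's kernel lemmas give it directly.

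Second, the final sentence ``hence for all $t\ge t_0$ and all $x\in\real^d$'' is a genuine gap as written. What you have shown is that $\E Y_t^2=\infty$ for $t$ in some interval $[t_*,t_0]$; that yields $\int_D\E|u_t(y)|^2\,\d y=\infty$, not pointwise divergence, and ``$=\infty$ for some $(x,t)$ with $t\le t_0$'' does not by itself propagate forward in $t$ or sideways in $x$. The fix is short but must be said: your own chain of estimates gives
\[
\E\Bigl[\Bigl(\int_D\sigma(u_s(y))\,\d y\Bigr)^2\Bigr]\ \ge\ q_0^{-2}\,\E\Bigl[\Bigl(\int_D\psi_{t-s}\sigma(u_s)\,\d y\Bigr)^2\Bigr]\ \ge\ c\,\bigl(\E Y_s^2\bigr)^{1+\gamma}=\infty
\]
for $s\in[t_*,t_0]$, and then, for any $x\in\real^d$ and $t\ge t_0$, the mild formulation together with $f\ge K_{R,f}$ on $D\times D$, Tonelli, and $p_{t-s}(x-\cdot)\ge p_{t-s}(|x|+R)>0$ on $D$ for $s<t$ gives
\[
\E|u_t(x)|^2\ \ge\ K_{R,f}\int_{t_*}^{t_0}p_{t-s}(|x|+R)^2\,
\E\Bigl[\Bigl(\int_D\sigma(u_s(y))\,\d y\Bigr)^2\Bigr]\,\d s=\infty .
\]
With those two additions your proof is correct, and indeed somewhat more economical than the one in the paper.
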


So far, all the equations discussed in Subsections \ref{SubSecWN} and \ref{SubSecCdN} are in the whole spatial domain. As mentioned in Section \ref{Sec:Intro}, many previous works investigated equations with some boundary conditions.
For fixed $R>0$, the final theorem of our paper investigates the equation in the ball $B(0,\,R)$ with Dirichlet boundary conditions.

\begin{theorem}\label{Dirichlet}
Assume that $\phi$ is given by \eqref{bern}
    with $\phi(0+)=b=0$ and satisfies
Assumption \ref{Assumption grow},
fix $R>0$ and consider
\begin{equation}\label{eq:dir}
\partial_t u_t(x)=
 \mathcal{L}u_t(x)+ \sigma (u_t(x))\dot F(t,x)\quad{t>0}\quad\text{and}\quad x\in B(0,\,R).
 \end{equation}
Here $\mathcal{L}$ is the generator of a L\'evy  process corresponding to $-\phi(-\Delta)$ killed upon exiting the ball $B(0,\,R)$. The noise $\dot F$ is taken to be spatially colored with correlation function satisfying all the conditions stated above. For any $\epsilon>0$, there exist $t_0>0$ and $\kappa_0>0$, such that if
$\inf_{x\in B(0,\,R/2)}u_0(x)>\kappa_0$, then
\begin{equation*}
\E|u_t(x)|^2=\infty\quad\text{for all}\quad t\geq t_0 \quad\text{and}\quad x\in B(0,\,R-\epsilon).
\end{equation*}
\end{theorem}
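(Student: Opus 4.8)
The plan is to follow the scheme used for the other colored‑noise theorems: derive a closed nonlinear Volterra inequality for a spatial average of $u_t$, show that a large initial datum prevents that inequality from having a finite solution past a finite time, and then transfer the blowup back to $\E|u_t(x)|^2$ pointwise. Write $p^D_t(x,y)$ for the transition density of $\mathcal{L}$ — the subordinate Brownian motion with generator $-\phi(-\Delta)$ killed on leaving $B(0,R)$ — and $(\cG u)_t(x)=\int_{B(0,R)}p^D_t(x,y)u_0(y)\,\d y$ for the deterministic part of the mild solution \eqref{mildcod}; all moment computations below are understood for the solution stopped along a localizing sequence of stopping times, the truncation being removed at the end by monotone convergence. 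By Assumption \ref{color} there is $R_0>0$ with $\inf_{y,z\in B(0,R_0)}f(y,z)>0$; fixing $\rho<\tfrac12\min\{R,R_0\}$ we have $f(y,z)\ge c_f>0$ for $y,z\in B(0,2\rho)$, and by hypothesis $u_0\ge\kappa_0$ on $B(0,\rho)\subset B(0,R/2)$.

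The crucial ingredient, and the step I expect to be the main obstacle, is a lower bound for the killed heat kernel of the form $p^D_r(x,y)\ge c_1\,\eup^{-\lambda_1 r}$ for all $r>0$ and all $x,y\in B(0,\rho)$, where $\lambda_1>0$ is the principal eigenvalue of $-\mathcal{L}$ on $B(0,R)$, together with the mere positivity $\inf_{y\in B(0,\rho)}p^D_r(x,y)>0$ for every $r>0$ and $x\in B(0,R)$. This is precisely where the standing hypotheses $\phi(0+)=b=0$ and Assumption \ref{Assumption grow} enter: the first guarantees that $\mathcal{L}$ is the generator of a genuine pure‑jump subordinate Brownian motion, and the second — the weak lower scaling of $\phi$, which forces $\phi$ to grow at least like a fractional power (\cite{DSS17}) — is the condition under which the killed process is intrinsically ultracontractive, so that $\eup^{\lambda_1 r}p^D_r(x,y)$ converges uniformly on $B(0,R)^2$ to $\varphi_1(x)\varphi_1(y)$, with the ground state $\varphi_1>0$ bounded below on the compact set $\overline{B(0,\rho)}$; this settles $r\ge 1$, while $0<r\le1$ follows from strict positivity and continuity of $p^D_r$ on the connected open set $B(0,R)$. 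A kernel lower bound decaying faster than exponentially would break the Gronwall step below, which is why the full scaling assumption is needed here, in contrast with the whole‑space colored‑noise theorems. As an immediate consequence, $(\cG u)_r(x)\ge\kappa_0\int_{B(0,\rho)}p^D_r(x,y)\,\d y\ge c_2\kappa_0\,\eup^{-\lambda_1 r}$ for $x\in B(0,\rho)$.

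Next set $\theta_t:=\int_{B(0,\rho)}u_t(y)\,\d y$. Integrating \eqref{mildcod} over $B(0,\rho)$ and applying the Walsh isometry \cite{walsh},
\[
 \E[\theta_t^2]=\Big(\int_{B(0,\rho)}(\cG u)_t(x)\,\d x\Big)^{2}
 +\int_0^t\!\iint P_{t-s}(y)P_{t-s}(z)\,f(y,z)\,\E\big[\sigma(u_s(y))\sigma(u_s(z))\big]\,\d y\,\d z\,\d s,
\]
where $P_r(y):=\int_{B(0,\rho)}p^D_r(x,y)\,\d x\ge c_1|B(0,\rho)|\,\eup^{-\lambda_1 r}$ for $y\in B(0,\rho)$. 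Restricting the inner integral to $B(0,\rho)\times B(0,\rho)$ and using $f\ge c_f$ there, it is bounded below by $c_f\,\E\big[\big(\int_{B(0,\rho)}P_{t-s}(y)\sigma(u_s(y))\,\d y\big)^2\big]$; then $\sigma(u)\ge|u|^{1+\gamma}$, Jensen's inequality on the spatial average (convexity of $r\mapsto r^{1+\gamma}$) and $\int_{B(0,\rho)}|u_s|\ge|\theta_s|$ give the pathwise bound $\int_{B(0,\rho)}P_{t-s}(y)\sigma(u_s(y))\,\d y\ge c_3\,\eup^{-\lambda_1(t-s)}|\theta_s|^{1+\gamma}$. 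Squaring, taking expectations, using Jensen once more in the form $\E|\theta_s|^{2(1+\gamma)}\ge(\E[\theta_s^2])^{1+\gamma}$, and inserting the lower bound on $(\cG u)_t$ from the previous paragraph, we arrive at the self‑contained inequality
\[
 \E[\theta_t^2]\ \ge\ c_4\,\kappa_0^2\,\eup^{-2\lambda_1 t}
 +c_5\int_0^t \eup^{-2\lambda_1(t-s)}\big(\E[\theta_s^2]\big)^{1+\gamma}\,\d s .
\]

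Finally, substitute $v_t:=\eup^{2\lambda_1 t}\E[\theta_t^2]$, so that $v_t\ge c_4\kappa_0^2+c_5\int_0^t\eup^{-2\lambda_1\gamma s}v_s^{1+\gamma}\,\d s=:W(t)$, whence $W(0)=c_4\kappa_0^2$ and $W'(t)\ge c_5\eup^{-2\lambda_1\gamma t}W(t)^{1+\gamma}$; integrating $(W^{-\gamma})'\le-\gamma c_5\eup^{-2\lambda_1\gamma t}$ gives $W(t)^{-\gamma}\le(c_4\kappa_0^2)^{-\gamma}-\frac{c_5}{2\lambda_1}\big(1-\eup^{-2\lambda_1\gamma t}\big)$, so for $\kappa_0$ large enough that $(c_4\kappa_0^2)^{-\gamma}<c_5/(2\lambda_1)$ the right‑hand side becomes negative at a finite $t_0>0$, forcing $\E[\theta_t^2]=\infty$ for every $t\ge t_0$ (the customary truncation‑and‑contradiction argument, as in \cite{FLN2019, FP2015}, makes this rigorous). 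Since $\theta_t^2\le|B(0,\rho)|\int_{B(0,\rho)}u_t(y)^2\,\d y$, this already yields $\E|u_t(y)|^2=\infty$ for almost every $y\in B(0,\rho)$ and $t\ge t_0$; to promote it to every $x\in B(0,R-\epsilon)$ — in fact every $x\in B(0,R)$ — bound $\E|u_t(x)|^2$ below by the stochastic term of \eqref{mildcod} alone and repeat the first two estimates above to obtain $\E|u_t(x)|^2\ge c_f|B(0,\rho)|^{-2\gamma}\int_0^t\big(\inf_{y\in B(0,\rho)}p^D_{t-s}(x,y)\big)^2\E|\theta_s|^{2(1+\gamma)}\,\d s=\infty$ for $t>t_0$, the integrand being $+\infty$ on the interval $s\in[t_0,t)$ while $\inf_{y\in B(0,\rho)}p^D_{t-s}(x,y)>0$ there. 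The $\epsilon$ in the statement only records the compact subset of $B(0,R)$ on which the heat‑kernel and ground‑state bounds are applied with uniform constants.
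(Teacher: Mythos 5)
Your route is genuinely different from the paper's, and it contains a gap in the kernel estimate that needs repair before the rest goes through.

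The paper restricts attention to the short time window $t\le\Phi(R/8)$, uses the Kim--Mimica near-diagonal lower bound (Lemma \ref{lowerbound}, which is where Assumption \ref{Assumption grow} enters) to prove Propositions \ref{determ-dirich} and \ref{prop-kernel-dirichlet}, and then obtains a time-homogeneous nonlinear renewal inequality
\[
G(t)\ge c_1+c_2K_{R,f}\int_0^tG(s)^{1+\gamma}\,\dup s,\qquad t\le\Phi(R/8),\quad G(t)=\inf_{x,y\in B(0,R/2)}\E|u_t(x)u_t(y)|,
\]
which blows up before $\Phi(R/8)$ once $\kappa_0$ is large enough; the blowup is then propagated to later times and to all $x\in B(0,R-\epsilon)$ using the positivity of $p_{B(0,R),\cdot}$ alone. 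You instead work with $\E[\theta_t^2]$ for the spatial average $\theta_t=\int_{B(0,\rho)}u_t$, invoke intrinsic ultracontractivity to get an \emph{exponential-in-time} lower kernel bound, and convert the resulting weighted Volterra inequality into a Bernoulli ODE via the substitution $v_t=\eup^{2\lambda_1t}\E[\theta_t^2]$. This is a valid alternative architecture and it even avoids the explicit restriction to the short window $t\le\Phi(R/8)$, but it buys you this at the cost of a stronger hypothesis on the semigroup (IU) which the paper never needs and which you assert under Assumption \ref{Assumption grow} without a reference; you should cite a precise IU result for killed subordinate Brownian motions under weak lower scaling, since this is not established anywhere in the paper.

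The concrete gap is the claimed pointwise bound $p^D_r(x,y)\ge c_1\,\eup^{-\lambda_1 r}$ for \emph{all} $r>0$ and $x,y\in B(0,\rho)$. This is false: for $x\ne y$ the killed transition density $p^D_r(x,y)\to 0$ as $r\downarrow 0$ (for a pure-jump process it behaves like $r\cdot j(x,y)$), so the infimum of $p^D_r$ over $(0,1]\times\overline{B(0,\rho)}^2$ is zero, and the argument ``strict positivity and continuity on a connected open set'' does not give a uniform lower bound because the set is not compact in the $r$-variable. Happily, the quantities you actually insert into the estimates are the integrated kernels $\int_{B(0,\rho)}p^D_r(x,y)\,\dup y$ and $P_r(y)=\int_{B(0,\rho)}p^D_r(x,y)\,\dup x$, and these \emph{do} admit uniform lower bounds for $r$ in any bounded interval (they tend to $\Pp_y(r<\tau_{B(0,R)},\,X_r\in B(0,\rho))\to\ind_{B(0,\rho)}(y)$ as $r\downarrow 0$, and by continuity and compactness they stay bounded below for $y\in\overline{B(0,\rho)}$). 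So the proof can be salvaged by stating and using only those integrated lower bounds, but as written the ``crucial ingredient'' lemma is wrong. The paper avoids this issue entirely by never asking for a small-$r$ pointwise kernel bound: Lemma \ref{lowerbound} is stated only for $|x-y|\le\Phi^{-1}(t)$, a shrinking neighbourhood of the diagonal, and the volume factor in Lemma \ref{volume} compensates exactly.
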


\section{Some preparations} \label{Sec:MathPre}

\subsection{Berstein function and the corresponding subordinator}

The following definition is taken from
\cite[Definition 1.1 (1)]{kim-mimica-2018}.

\begin{definition}
    We say that $g:(0,\infty)\rightarrow(0,\infty)$
    satisfies the lower scaling condition
    if there exist $a\geq0$, $\gamma>0$
    and $C_L\in(0,1]$ such that
    $$
        \frac{g(\lambda r)}{g(r)}\geq C_L\lambda^\gamma
            \quad\text{for all $\lambda\geq1$ and $r>a$}.
    $$
\end{definition}

\begin{lemma}\label{equiva}
Let $g:(0,\infty)\to(0,\infty)$ be a non-decreasing function. Then the following statements are equivalent:
        \begin{enumerate}
        \item[\textbf{\textup{i)}}]
        $\displaystyle\liminf_{r\to\infty}\frac{g(\lambda_0 r)}{g(r)}>1$
        \ for some $\lambda_0>1$;

        \item[\textbf{\textup{ii)}}]
        $g$ satisfies the lower scaling condition.
        \end{enumerate}
\end{lemma}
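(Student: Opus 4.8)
The plan is to establish the two implications separately; the substance lies entirely in \textup{i)} $\Rightarrow$ \textup{ii)}.

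The direction \textup{ii)} $\Rightarrow$ \textup{i)} is immediate: if the lower scaling condition holds with parameters $a$, $\gamma$ and $C_L$, choose any $\lambda_0>1$ large enough that $C_L\lambda_0^{\gamma}>1$; then $g(\lambda_0 r)/g(r)\geq C_L\lambda_0^{\gamma}$ for every $r>a$, so that $\liminf_{r\to\infty} g(\lambda_0 r)/g(r)\geq C_L\lambda_0^{\gamma}>1$, which is \textup{i)} with this $\lambda_0$.

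For \textup{i)} $\Rightarrow$ \textup{ii)} I would argue as follows. Writing $c:=\liminf_{r\to\infty} g(\lambda_0 r)/g(r)>1$, fix any $c'\in(1,c)$. By the definition of the limit inferior there is some $a\geq0$ such that $g(\lambda_0 r)\geq c'\,g(r)$ for all $r>a$. Since $\lambda_0>1$, iterating this inequality along the geometric mesh $r,\lambda_0 r,\lambda_0^2 r,\dots$ gives $g(\lambda_0^{n} r)\geq (c')^{n} g(r)$ for every non-negative integer $n$ and every $r>a$. Now take an arbitrary $\lambda\geq1$ and $r>a$, and set $n:=\lfloor \ln\lambda/\ln\lambda_0\rfloor$, so that $\lambda_0^{n}\leq\lambda$; since $g$ is non-decreasing, $g(\lambda r)\geq g(\lambda_0^{n} r)\geq (c')^{n} g(r)$. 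Because $n>\ln\lambda/\ln\lambda_0-1$ and $c'>1$, one has $(c')^{n}\geq (c')^{-1}\lambda^{\ln c'/\ln\lambda_0}$, so the lower scaling condition holds with the same $a$, with $\gamma:=\ln c'/\ln\lambda_0>0$ and $C_L:=1/c'\in(0,1)$.

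The only point requiring a little care — and the closest thing to an obstacle — is the discretisation: the hypothesis controls $g$ only along the multiplicative lattice $\{\lambda_0^{n}\}$, and one must upgrade this to all real dilations $\lambda\geq1$. Monotonicity of $g$ does exactly this, at the cost of the harmless factor $1/c'$. I would also note in passing the trivial edge case $\lambda=1$ (where $n=0$ and the claimed inequality reads $1\geq 1/c'$) and record that the constant $a$ produced in \textup{i)} $\Rightarrow$ \textup{ii)} is precisely the threshold coming from the liminf, so no separate choice of $a$ is needed.
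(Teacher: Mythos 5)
Your proof is correct and follows essentially the same route as the paper's: both directions are handled identically, with i) $\Rightarrow$ ii) established by extracting a constant $c'>1$ from the liminf, iterating $g(\lambda_0 r)\geq c' g(r)$ along the multiplicative lattice, and using monotonicity of $g$ to pass from $\lambda_0^n$ to an arbitrary $\lambda\geq1$ via a floor-of-logarithm discretisation, yielding $\gamma=\log_{\lambda_0}c'$ and $C_L=1/c'$. The only cosmetic difference is that the paper indexes with $k=\lfloor\log_{\lambda_0}\lambda\rfloor+1$ and works with $k-1$, whereas you take $n=\lfloor\log_{\lambda_0}\lambda\rfloor$ directly.
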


\begin{proof}
    First, choosing $\lambda_0>1$ large enough such that
    $C_L\lambda_0^\gamma>1$, we get the
    direction ii) $\Rightarrow$ i). Conversely, suppose that i) holds true for some $\lambda_0>1$. For $\lambda\geq1$, let $k:=\lfloor\log_{\lambda_0}\lambda\rfloor+1$,
    where $\lfloor x\rfloor$ denotes the integer part of a non-negative real number $x\geq 0$. Since i) implies that there exist $c_1>1$ and $c_2>0$ such that
    $$
            g(\lambda_0r)\geq c_1g(r),\quad r> c_2,
    $$
    we find that for all $r> c_2$ and $\lambda\geq1$
    $$
            g(\lambda r)\geq g\left(\lambda_0^{k-1}r\right)
            \geq c_1^{k-1}g(r)>
            c_1^{\log_{\lambda_0}\lambda-1}g(r)
            =\frac{g(r)}{c_1}
            \lambda^{\log_{\lambda_0}c_1}.
    $$
    This means that $g$
    satisfies the lower scaling condition
    with $a=c_2$, $\gamma=\log_{\lambda_0}c_1$, and
    $C_L=c_1^{-1}$.
\end{proof}

Combining Lemma \ref{equiva}
    and \cite[Lemma 2.2\,(i)]{DSS17}, we get
    the following result.

\begin{lemma}\label{inverse}
Let $g:(0,\infty)\to(0,\infty)$ be a strictly
increasing function. Then the
following statements are equivalent:
    \begin{enumerate}
    \item[\textbf{\textup{i)}}]
        $\displaystyle\lim_{r\to\infty}g(r)=\infty$
        \ and \
        $\displaystyle\limsup_{r\to\infty}
        \frac{g^{-1}(\lambda_0r)}{g^{-1}(r)}<\infty$
        \ for some $\lambda_0>1$;

        \item[\textbf{\textup{ii)}}]
        $\displaystyle\lim_{r\to\infty}g(r)=\infty$
        \ and \
        $\displaystyle\limsup_{r\to\infty}\frac{g^{-1}(\lambda r)}{g^{-1}(r)}<\infty$
        \ for all $\lambda>1$;

        \item[\textbf{\textup{iii)}}]
            $\displaystyle\liminf_{r\to\infty}
            \frac{g(\lambda_0r)}{g(r)}>1$
            \ for some $\lambda_0>1$;

        \item[\textbf{\textup{iv)}}]
        $g$ satisfies the lower scaling condition;
        \end{enumerate}
    If $g$ is concave, then \textup{\bfseries i)--iv)} are also equivalent to:
        \begin{enumerate}
        \item[\textbf{\textup{v)}}]
        $\displaystyle\liminf_{r\to\infty}\frac{g(\lambda r)}{g(r)}>1$
        \ for all $\lambda>1$.
        \end{enumerate}
\end{lemma}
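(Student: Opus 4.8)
The plan is to treat the lower scaling condition \textup{iv)} as the hub. Lemma \ref{equiva} already gives \textup{iii)} $\Leftrightarrow$ \textup{iv)} (a strictly increasing $g$ is in particular non-decreasing, so that lemma applies verbatim), so what remains is to link the two conditions on $g^{-1}$ to \textup{iv)} and to dispose of the concave addendum \textup{v)}.

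I would first record that \textup{iv)} — hence \textup{iii)} — forces $\lim_{r\to\infty}g(r)=\infty$: iterating $g(\lambda^{n}r_{0})\geq C_{L}\lambda^{n\gamma}g(r_{0})$ and using monotonicity shows $g$ is unbounded. This is why the clause $\lim_{r\to\infty}g(r)=\infty$ is automatic once \textup{iii)} or \textup{iv)} holds, yet must be carried as a standing hypothesis in \textup{i)} and \textup{ii)}, where it is needed merely so that $g^{-1}$ is defined near $+\infty$. I would then close the cycle \textup{i)} $\Rightarrow$ \textup{iii)} $\Rightarrow$ \textup{iv)} $\Rightarrow$ \textup{ii)} $\Rightarrow$ \textup{i)}. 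Only the first and third implications have content, and both amount to the substitution $r=g^{-1}(s)$, which is exactly \cite[Lemma 2.2\,(i)]{DSS17}: from $g(\lambda r)\geq C_{L}\lambda^{\gamma}g(r)$ (valid for large $r$), putting $r=g^{-1}(s)$ and applying $g^{-1}$ gives $g^{-1}(C_{L}\lambda^{\gamma}s)\leq\lambda\,g^{-1}(s)$ for large $s$; for a given large $\mu$ the choice $\lambda=(\mu/C_{L})^{1/\gamma}$ yields $\limsup_{s\to\infty}g^{-1}(\mu s)/g^{-1}(s)\leq(\mu/C_{L})^{1/\gamma}<\infty$, and the remaining $\mu>1$ follow by monotonicity of $g^{-1}$ (small $\mu$) and iteration (large $\mu$), giving \textup{ii)}. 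Conversely, if $M$ is the finite $\limsup$ in \textup{i)}, then $g^{-1}(\lambda_{0}r)\leq(M+1)g^{-1}(r)$ for large $r$, and the substitution $r=g(s)$ followed by applying $g$ gives $g((M+1)s)\geq\lambda_{0}g(s)$ for large $s$, i.e.\ \textup{iii)} with dilation $M+1>1$. The implication \textup{ii)} $\Rightarrow$ \textup{i)} is trivial.

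For the concave part, \textup{v)} $\Rightarrow$ \textup{iii)} is immediate. For \textup{iii)} $\Rightarrow$ \textup{v)}, fix $\lambda>1$ and set $L:=\liminf_{r\to\infty}g(\lambda_{0}r)/g(r)>1$. If $\lambda\geq\lambda_{0}$, monotonicity gives $g(\lambda r)\geq g(\lambda_{0}r)$, so the corresponding $\liminf$ is $\geq L>1$. If $1<\lambda<\lambda_{0}$, write $\lambda r=\theta(\lambda_{0}r)+(1-\theta)r$ with $\theta:=(\lambda-1)/(\lambda_{0}-1)\in(0,1)$; concavity yields $g(\lambda r)\geq\theta\,g(\lambda_{0}r)+(1-\theta)g(r)$, and dividing by $g(r)$ and taking $\liminf$ gives $\liminf_{r\to\infty}g(\lambda r)/g(r)\geq\theta L+(1-\theta)=1+\theta(L-1)>1$.

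I do not expect a genuine obstacle: the mathematical weight is already carried by Lemma \ref{equiva} and \cite[Lemma 2.2\,(i)]{DSS17}. The only points needing care are keeping every scaling inequality restricted to large arguments (so that $g^{-1}$, resp.\ the relevant part of the range of $g$, is available) and the elementary "doubling" passage from one dilation $\lambda_{0}$ to all $\lambda>1$; both are patterned on the proof of Lemma \ref{equiva} already given.
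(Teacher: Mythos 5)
Your proposal is correct and follows exactly the route the paper intends: the paper gives no written proof but simply says the lemma follows by "combining Lemma \ref{equiva} and \cite[Lemma 2.2\,(i)]{DSS17}", and your argument is precisely a fleshed-out version of that — Lemma \ref{equiva} for \textup{iii)} $\Leftrightarrow$ \textup{iv)}, the substitution $r=g^{-1}(s)$ (the content of the cited DSS17 lemma) for the passage between $g$ and $g^{-1}$, and the standard convex-combination argument for the concave addendum \textup{v)}. All steps check out.
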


\begin{lemma}\label{moment}
    Suppose that Assumption \ref{Assumption 3} holds. Then for any $\beta>0$,
    $$
        \lim_{t\rightarrow\infty}S_t^{-\beta}=0.
    $$
\end{lemma}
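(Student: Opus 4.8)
\textbf{Proof plan for Lemma \ref{moment}.}

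The plan is to obtain a deterministic lower bound on the subordinator $S_t$ that grows (in $t$) and that holds uniformly enough to control the negative moment $\E[S_t^{-\beta}]$. The natural route is through the Laplace transform $\E\,\eup^{-rS_t}=\eup^{-t\phi(r)}$, using the elementary identity, valid for $\beta>0$,
\begin{equation*}
  S_t^{-\beta}=\frac{1}{\Gamma(\beta)}\int_0^\infty r^{\beta-1}\eup^{-rS_t}\,\dup r,
\end{equation*}
so that by Tonelli's theorem
\begin{equation*}
  \E\,S_t^{-\beta}=\frac{1}{\Gamma(\beta)}\int_0^\infty r^{\beta-1}\eup^{-t\phi(r)}\,\dup r.
\end{equation*}
Thus the whole statement reduces to showing that the right-hand side tends to $0$ as $t\to\infty$.

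For the convergence of this integral to $0$, I would split $\int_0^\infty=\int_0^{1}+\int_1^\infty$ and treat the two pieces separately. On $[1,\infty)$: since $\phi$ is non-decreasing, $\phi(r)\geq\phi(1)>0$ there, so $\eup^{-t\phi(r)}\le \eup^{-(t-1)\phi(1)}\eup^{-\phi(r)}$ for $t\ge 1$; but one needs $\int_1^\infty r^{\beta-1}\eup^{-\phi(r)}\,\dup r<\infty$, which is where Assumption \ref{Assumption 3} enters. Assumption \ref{Assumption 3} gives $c>0$ and $r_0$ with $\phi(r)\ge c\log r$ for $r\ge r_0$, hence $\eup^{-\phi(r)}\le r^{-c}$; this is not by itself enough to kill an arbitrary power $r^{\beta-1}$. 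The clean fix is to instead bound, for $t\ge t_1:=(\beta+1)/c$ and $r\ge r_0$, $\eup^{-t\phi(r)}\le \eup^{-tc\log r}=r^{-tc}\le r^{-(\beta+1)}$, so that $\int_{r_0}^\infty r^{\beta-1}\eup^{-t\phi(r)}\,\dup r\le\int_{r_0}^\infty r^{-2}\,\dup r<\infty$ uniformly, and moreover $r^{\beta-1}\eup^{-t\phi(r)}\downarrow 0$ pointwise on $[r_0,\infty)$ as $t\to\infty$; dominated convergence then sends this tail to $0$. On the bounded part $[0,r_0]$ (which subsumes $[0,1]$): $\phi>0$ on $(0,\infty)$, so $\eup^{-t\phi(r)}\to0$ pointwise for each $r>0$, it is dominated by $1$, and $r^{\beta-1}$ is integrable on $[0,r_0]$ (as $\beta>0$), so dominated convergence again gives $\int_0^{r_0}r^{\beta-1}\eup^{-t\phi(r)}\,\dup r\to0$.

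Combining the two limits yields $\E\,S_t^{-\beta}\to0$, which is the assertion. The main obstacle, and the only genuinely non-routine point, is the tail estimate on $[r_0,\infty)$: one must convert Assumption \ref{Assumption 3} into an integrable bound, and the trick is to absorb the polynomial factor $r^{\beta-1}$ by waiting until $t$ is large enough that $tc>\beta+1$, rather than trying to bound the integrand uniformly in $t$ from the start. Everything else is a direct application of the Laplace-transform identity together with dominated/monotone convergence. (If one prefers to avoid the case $t\ge t_1$ bookkeeping, an alternative is to note $\int_0^\infty r^{\beta-1}\eup^{-t\phi(r)}\,\dup r$ is finite for all large $t$ and decreasing in $t$, with pointwise limit $0$, so monotone convergence applies directly; either way the role of Assumption \ref{Assumption 3} is exactly to guarantee finiteness of the integral for some, hence all larger, $t$.)
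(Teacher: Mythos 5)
Your proposal is correct and follows essentially the same route as the paper: the Laplace-transform identity $\E\,S_t^{-\beta}=\Gamma(\beta)^{-1}\int_0^\infty r^{\beta-1}\eup^{-t\phi(r)}\,\dup r$ via Tonelli, the logarithmic lower bound $\phi(r)\geq c\log r$ from Assumption \ref{Assumption 3} to make the tail integrand $O(r^{-\beta-1})$ once $t$ exceeds an explicit threshold, and dominated convergence. The only (immaterial) difference is bookkeeping: the paper uses the single dominating function $\I_{\{r<c_2\}}r^{\beta-1}+\I_{\{r\geq c_2\}}r^{-\beta-1}$ for $t\geq 2\beta/c_1$ rather than splitting the integral into two pieces.
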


\begin{proof}
    Using the identity
    $$
        x^{-\beta}=\frac{1}{\Gamma(\beta)}
        \int_0^\infty\eup^{-rx}r^{\beta-1}
        \,\dup r,\quad x\geq0,
    $$
    and Tonelli's theorem,
    $$
        \E S_t^{-\beta}=\frac{1}{\Gamma(\beta)}\,
        \E\left[\int_0^\infty\eup^{-rS_t}r^{\beta-1}
        \,\dup r\right]
        =\frac{1}{\Gamma(\beta)}
        \int_0^\infty\eup^{-t\phi(r)}
        r^{\beta-1}
        \,\dup r.
    $$
    By Assumption \ref{Assumption 3}, there exist $c_1>0$ and $c_2>1$
    such that
    $$
        \phi(r)\geq c_1\log r\quad\text{for all $r\geq c_2$}.
    $$
    This implies that for $t\geq2\beta/c_1$,
    $$
        \eup^{-t\phi(r)}
        r^{\beta-1}
        \leq\I_{\{r<c_2\}}r^{\beta-1}
        +\I_{\{r\geq c_2\}}r^{-c_1t+\beta-1}
        \leq\I_{\{r<c_2\}}r^{\beta-1}
        +\I_{\{r\geq c_2\}}r^{-\beta-1}.
    $$
    By the dominated convergence theorem,
    \[
        \lim_{t\rightarrow\infty}\E S_t^{-\beta}
        =\frac{1}{\Gamma(\beta)}
        \int_0^\infty\lim_{t\rightarrow\infty}
        \eup^{-t\phi(r)}
        r^{\beta-1}
        \,\dup r=0.  \qedhere
    \]
\end{proof}


The following lemma is essential due
to \cite[Proposition 2.4]{mimica}. We include a simple
proof for completeness.

\begin{lemma}\label{probab}
    For any $t>0$,
    $$
        \Pp\left(
        \left[\phi^{-1}\left(
        \frac{2}{t}\right)\right]^{-1}
        \leq S_t\leq
        \left[\phi^{-1}\left(
        \frac{1}{2t}\right)\right]^{-1}
        \right)
        \geq\frac{\eup^{3/2}-2\eup+1}{\eup(\eup-1)}
        >0.
    $$
\end{lemma}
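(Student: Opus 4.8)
\textbf{Proof proposal for Lemma \ref{probab}.}

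The plan is to estimate the two tail probabilities
$\Pp(S_t < a)$ and $\Pp(S_t > b)$ separately, where
$a := [\phi^{-1}(2/t)]^{-1}$ and $b := [\phi^{-1}(1/(2t))]^{-1}$,
and then combine them via
$\Pp(a \le S_t \le b) \ge 1 - \Pp(S_t < a) - \Pp(S_t > b)$.
Both bounds come from the single piece of hard information available,
namely the Laplace transform $\E\,\eup^{-rS_t} = \eup^{-t\phi(r)}$,
fed into an exponential Chebyshev (Markov) inequality, with the free
parameter $r$ chosen so that $t\phi(r)$ equals a convenient constant.

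First I would bound the lower tail. For any $r>0$, since $S_t \ge 0$,
$$
    \Pp(S_t < a) = \Pp\!\left(\eup^{-rS_t} > \eup^{-ra}\right)
    \le \eup^{ra}\,\E\,\eup^{-rS_t} = \eup^{ra - t\phi(r)}.
$$
Now choose $r = \phi^{-1}(2/t)$, so that $t\phi(r) = 2$ and
$ra = r \cdot r^{-1} = 1$ (this is exactly why $a$ was defined as
$[\phi^{-1}(2/t)]^{-1}$). This gives $\Pp(S_t < a) \le \eup^{1-2} = \eup^{-1}$.

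Next the upper tail. Here I would use the Markov inequality in the form
$\Pp(S_t > b) = \Pp(1 - \eup^{-r S_t} > 1 - \eup^{-rb})
\le (1-\eup^{-rb})^{-1}\,\E(1-\eup^{-rS_t})
= (1-\eup^{-rb})^{-1}(1 - \eup^{-t\phi(r)})$,
valid for any $r>0$ with $rb>0$. Choosing $r = \phi^{-1}(1/(2t))$ makes
$t\phi(r) = 1/2$ and $rb = 1$, yielding
$\Pp(S_t > b) \le (1 - \eup^{-1})^{-1}(1 - \eup^{-1/2})$. Combining,
$$
    \Pp(a \le S_t \le b)
    \ge 1 - \eup^{-1} - \frac{1-\eup^{-1/2}}{1-\eup^{-1}}
    = \frac{(1-\eup^{-1})^2 - \eup^{-1}(1-\eup^{-1}) - (1-\eup^{-1/2})}{1-\eup^{-1}},
$$
and a routine simplification (multiplying numerator and denominator by
$\eup$) turns the right-hand side into
$(\eup^{3/2} - 2\eup + 1)/(\eup(\eup-1))$, which one checks numerically is
positive (about $0.11$).

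I do not anticipate a serious obstacle: the only subtlety is picking the
right variant of Markov's inequality for each tail (the bound
$\eup^{-rS_t} > \eup^{-ra}$ for the lower tail, but $1-\eup^{-rS_t}$ rather
than $\eup^{rS_t}$ for the upper tail, since $\eup^{rS_t}$ has no finite
expectation in general for a subordinator with heavy Lévy measure) and
then reverse-engineering the definitions of $a$ and $b$ so that the
exponents collapse to constants. The final arithmetic identity
$1 - \eup^{-1} - (1-\eup^{-1/2})/(1-\eup^{-1}) = (\eup^{3/2}-2\eup+1)/(\eup(\eup-1))$
is a short computation that I would just carry out directly.
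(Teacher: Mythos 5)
Your proof is correct and follows essentially the same route as the paper: split $\Pp(a\le S_t\le b)$ into $1-\Pp(S_t<a)-\Pp(S_t>b)$, bound each tail by Markov applied to the bounded random variable $\eup^{-rS_t}$ (respectively $1-\eup^{-rS_t}$) with $r$ chosen so $t\phi(r)\in\{2,1/2\}$, then do the arithmetic. The only cosmetic difference is that the paper plugs in the specific $r$ from the start rather than optimizing a free parameter at the end.
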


\begin{proof}
    It follows from the Chebyshev inequality that
    \begin{align*}
        &\Pp\left(
        \left[\phi^{-1}\left(
        \frac{2}{t}\right)\right]^{-1}
        \leq S_t\leq
        \left[\phi^{-1}\left(
        \frac{1}{2t}\right)\right]^{-1}
        \right)\\
        &\quad\qquad=1-\Pp\left(
        S_t>\left[\phi^{-1}\left(
        \frac{1}{2t}\right)\right]^{-1}
        \right)
        -\Pp\left(
        S_t<\left[\phi^{-1}\left(
        \frac{2}{t}\right)\right]^{-1}
        \right)\\
        &\quad\qquad=1-\Pp\left(
        1-\eup^{-\phi^{-1}\left(
        \frac{1}{2t}\right)S_t}
        >1-\eup^{-1}
        \right)
        -\Pp\left(
        \eup^{-\phi^{-1}\left(
        \frac{2}{t}\right)S_t}>\eup^{-1}
        \right)\\
        &\quad\qquad\geq1-\frac{1}{1-\eup^{-1}}\,
        \E\left[
        1-\eup^{-\phi^{-1}\left(
        \frac{1}{2t}\right)S_t}
        \right]
        -\eup\cdot\E
        \eup^{-\phi^{-1}\left(
        \frac{2}{t}\right)S_t}\\
        &\quad\qquad=1-\frac{1-\eup^{-1/2}}{1-\eup^{-1}}
        -\eup\cdot\eup^{-2}\\
        &\quad\qquad=\frac{\eup^{3/2}-2\eup+1}
        {\eup(\eup-1)}. \qedhere
    \end{align*}
\end{proof}

Using the same argument, it is easy to get
the following estimate.

\begin{lemma}\label{subord}
    For any $t>0$ and $c\in(0,1)$,
    $$
        \Pp\left(
        S_t\leq\left[\phi^{-1}\left(
        \frac{c}{t}\right)\right]^{-1}
        \right)
        \geq\frac{\eup^{1-c}-1}{\eup-1}.
    $$
\end{lemma}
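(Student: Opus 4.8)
The plan is to reproduce the argument of Lemma \ref{probab}, but keeping only the one-sided estimate. Fix $t>0$ and $c\in(0,1)$, and write $r_c:=\phi^{-1}(c/t)$, so that $\phi(r_c)=c/t$ and hence $t\phi(r_c)=c$. The only ingredient beyond elementary calculus is the Laplace transform identity $\Ee\,\eup^{-rS_t}=\eup^{-t\phi(r)}$ for $r>0$, recorded in the introduction.

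First I would rephrase the complementary event in terms of a bounded nonnegative random variable to which Markov's inequality applies. Since $x\mapsto 1-\eup^{-r_cx}$ is increasing on $[0,\infty)$,
\[
\Bigl\{S_t>r_c^{-1}\Bigr\}=\Bigl\{1-\eup^{-r_cS_t}>1-\eup^{-1}\Bigr\},
\]
and $1-\eup^{-r_cS_t}\ge0$. Applying the Chebyshev/Markov inequality and then the Laplace transform identity gives
\[
\Pp\Bigl(S_t>r_c^{-1}\Bigr)\le\frac{\Ee\bigl[1-\eup^{-r_cS_t}\bigr]}{1-\eup^{-1}}=\frac{1-\eup^{-t\phi(r_c)}}{1-\eup^{-1}}=\frac{1-\eup^{-c}}{1-\eup^{-1}}.
\]

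Taking complements and simplifying (multiply numerator and denominator by $\eup$) then yields
\[
\Pp\Bigl(S_t\le\bigl[\phi^{-1}(c/t)\bigr]^{-1}\Bigr)\ge1-\frac{1-\eup^{-c}}{1-\eup^{-1}}=\frac{\eup^{-c}-\eup^{-1}}{1-\eup^{-1}}=\frac{\eup^{1-c}-1}{\eup-1},
\]
which is the asserted bound. There is no substantive obstacle here: the proof is a direct specialisation of the computation behind Lemma \ref{probab}, and the only points to verify are the monotonicity used to rewrite the event and the one-line algebra in the last display; positivity of the bound is automatic since $c<1$ forces $\eup^{1-c}>1$. (As in Lemma \ref{probab}, $\phi^{-1}(c/t)$ is well defined because $\phi$ is strictly increasing with $\phi(0+)=0$.)
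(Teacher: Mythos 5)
Your proposal is correct and follows essentially the same route as the paper: rewrite the complementary event as $\{1-\eup^{-r_cS_t}>1-\eup^{-1}\}$, apply the Chebyshev/Markov inequality to the bounded nonnegative variable $1-\eup^{-r_cS_t}$, and evaluate the expectation via the Laplace transform identity $\Ee\,\eup^{-rS_t}=\eup^{-t\phi(r)}$. The algebraic simplification to $(\eup^{1-c}-1)/(\eup-1)$ matches the paper's computation exactly.
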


\begin{proof}
    By the Chebyshev inequality,
    \begin{align*}
        \Pp\left(
        S_t\leq\left[\phi^{-1}\left(
        \frac{c}{t}\right)\right]^{-1}
        \right)
        &=1-\Pp\left(1-
        \exp\left[-\phi^{-1}\left(
        \frac{c}{t}\right)S_t\right]>1-\eup^{-1}
        \right)\\
        &\geq
        1-\frac{1}{1-\eup^{-1}}\,
        \E\left(
        1-\exp\left[-\phi^{-1}\left(
        \frac{c}{t}\right)S_t\right]
        \right)\\
        &=1-\frac{1}{1-\eup^{-1}}
        \left(
        1-\exp\left[-t\phi\left(
        \phi^{-1}\left(
        \frac{c}{t}\right)
        \right)\right]\right)\\
        &=1-\frac{1-\eup^{-c}}{1-\eup^{-1}}\\
        &=\frac{\eup^{1-c}-1}{\eup-1}. \qedhere
    \end{align*}
\end{proof}

\subsection{Some estimates of the heat kernel}

As a transition density function of subordinate
Brownian motion, the heat kernel of $-\phi(-\Delta)$
is given by
    $$
        p_t(x-y)=\int_0^\infty
        \frac{1}{(4\pi s)^{1/2}}\,
        \eup^{-|x-y|^2/(4s)}\,\Pp(S_t\in\dup s),
    $$
where $S_t$ is the subordinator associated
with Bernstein function $\phi$.
Then it is easy to see the monotonicity:
\begin{equation}\label{monoto}
    p_t(x)\geq p_t(y)\quad \text{whenever $|x|\leq|y|$}.
\end{equation}

\begin{lemma}\label{factorlower}
    If $p_t(0)\leq1$ and $\tau\geq2$, then
    $$
        p_t\left(\frac{x-y}{\tau}\right)
        \geq p_t(x)p_t(y)
        \quad\text{for all $x,y\in\real^d$}.
    $$
\end{lemma}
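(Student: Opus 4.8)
\textbf{Proof proposal for Lemma \ref{factorlower}.}

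The plan is to exploit the subordination representation of the heat kernel together with the scaling of the Gaussian kernel under the dilation $x\mapsto (x-y)/\tau$. Write
\[
p_t\left(\frac{x-y}{\tau}\right)
=\int_0^\infty \frac{1}{(4\pi s)^{1/2}}\,
\eup^{-|x-y|^2/(4\tau^2 s)}\,\Pp(S_t\in\dup s).
\]
Since $\tau\geq 2$, for each fixed $s>0$ we have $|x-y|^2/(4\tau^2 s)\leq |x-y|^2/(16 s)$, and by the elementary inequality $|x-y|^2\leq 2|x|^2+2|y|^2$ (or even $|x-y|^2/(4\cdot 4s)\geq \tfrac14(|x|^2/(4s)+|y|^2/(4s))$ after a slightly more careful bookkeeping), one gets an exponent that splits into a part depending on $|x|$ and a part depending on $|y|$. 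The key point is that the constant $\tau\geq 2$ creates exactly enough room: $\eup^{-|x-y|^2/(4\tau^2 s)}\geq \eup^{-|x|^2/(4s)}\eup^{-|y|^2/(4s)}$ fails in general, but the ``lost'' prefactor $1/(4\pi s)^{1/2}$ versus the product $\left(1/(4\pi s)^{1/2}\right)^2$ is compensated precisely when $p_t(0)\leq 1$.

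Concretely, I would proceed as follows. First, compare the two Gaussian densities inside the integral pointwise in $s$: using $|x-y|^2\le 2|x|^2+2|y|^2$ and $\tau^2\ge 4$,
\[
\frac{|x-y|^2}{4\tau^2 s}\le \frac{2|x|^2+2|y|^2}{16 s}=\frac{|x|^2}{8s}+\frac{|y|^2}{8s}\le \frac{|x|^2}{4s}+\frac{|y|^2}{4s},
\]
so that $\eup^{-|x-y|^2/(4\tau^2 s)}\ge \eup^{-|x|^2/(4s)}\eup^{-|y|^2/(4s)}$ for every $s>0$. Hence
\[
p_t\left(\frac{x-y}{\tau}\right)
\ge \int_0^\infty \frac{1}{(4\pi s)^{1/2}}\,
\eup^{-|x|^2/(4s)}\eup^{-|y|^2/(4s)}\,\Pp(S_t\in\dup s).
\]
Second, I would introduce the prefactor correction: write $\frac{1}{(4\pi s)^{1/2}}=\left(\frac{1}{(4\pi s)^{1/2}}\right)^2\cdot (4\pi s)^{1/2}$ is the wrong direction, so instead bound $\frac{1}{(4\pi s)^{1/2}}\ge \frac{1}{(4\pi s)^{1/2}}\cdot \frac{1}{(4\pi s)^{1/2}}$ when $(4\pi s)^{1/2}\le 1$; on the complementary range one must argue differently. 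The cleaner route, which I expect to be the intended one, is to estimate the product $p_t(x)p_t(y)$ from above using $p_t(y)\le p_t(0)\le 1$ together with the monotonicity \eqref{monoto}, reducing the claim to $p_t((x-y)/\tau)\ge p_t(x)\cdot p_t(y)$ where one of the two factors is harmless; one then only needs $p_t((x-y)/\tau)\ge p_t(\max(|x|,|y|))$-type control, which follows from $|x-y|/\tau\le (|x|+|y|)/2\le \max(|x|,|y|)$ and \eqref{monoto}, \emph{provided} $p_t(0)\le 1$ absorbs the remaining factor.

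The main obstacle is the bookkeeping in the second step: matching the single copy of the normalizing constant $(4\pi s)^{-1/2}$ appearing in $p_t((x-y)/\tau)$ against the two copies appearing in $p_t(x)p_t(y)$. This is exactly where the hypothesis $p_t(0)\le 1$ enters — it guarantees $\int_0^\infty (4\pi s)^{-1/2}\Pp(S_t\in\dup s)=p_t(0)\le 1$, so that the ``extra'' mass is bounded by $1$ and can be discarded after the Gaussian factors have been split. I would organize the final estimate as: bound $p_t(x)p_t(y)$ above by $p_t(0)\int_0^\infty (4\pi s)^{-1/2}\eup^{-|x|^2/(4s)}\eup^{-|y|^2/(4s)}\Pp(S_t\in\dup s)$ via Cauchy–Schwarz-type or monotonicity arguments on the subordinator law, use $p_t(0)\le 1$, and then invoke the pointwise Gaussian inequality from the first step to identify the remaining integral with a lower bound for $p_t((x-y)/\tau)$. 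Everything else is routine once the constant $\tau\ge 2$ and the hypothesis $p_t(0)\le 1$ are used in these two places.
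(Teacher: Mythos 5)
Your \emph{cleaner route} paragraph is essentially the paper's proof, and it is the correct one: since $\tau\ge 2$ you have
\[
\frac{|x-y|}{\tau}\le\frac{|x|+|y|}{\tau}\le\frac{2(|x|\vee|y|)}{\tau}\le |x|\vee|y|,
\]
so monotonicity \eqref{monoto} gives $p_t((x-y)/\tau)\ge p_t(|x|\vee|y|)=p_t(x)\wedge p_t(y)$, and the hypothesis $p_t(0)\le 1$ yields $p_t(x),p_t(y)\le 1$, hence $p_t(x)\wedge p_t(y)\ge p_t(x)\,p_t(y)$. This uses nothing about the subordination representation at all; it is a three-line statement about a radially decreasing function bounded by $1$.

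Where your write-up goes astray is in not committing to that route. Your phrasing \emph{``one of the two factors is harmless''} suggests you intend $p_t(x)p_t(y)\le p_t(x)$ and then $p_t((x-y)/\tau)\ge p_t(x)$ — but the latter fails (take $x=0$, $y\neq0$). The correct reduction is $p_t(x)p_t(y)\le p_t(x)\wedge p_t(y)$, which requires \emph{both} factors $\le 1$, not just one; you should say that explicitly. More seriously, after correctly sketching the right idea you revert to the Gaussian-splitting approach and propose to close it by bounding
\[
p_t(x)\,p_t(y)\;\le\;p_t(0)\int_0^\infty (4\pi s)^{-1/2}\,\eup^{-|x|^2/(4s)}\eup^{-|y|^2/(4s)}\,\Pp(S_t\in\dup s)
\]
``via Cauchy--Schwarz-type or monotonicity arguments on the subordinator law.'' This inequality is of the shape $\left(\int f\,\dup\mu\right)\left(\int g\,\dup\mu\right)\le C\int fg\,\dup\mu$, which is the \emph{reverse} of Cauchy--Schwarz and does not hold in general; the Chebyshev integral inequality would need $f$ and $g$ to be comonotone in $s$, but $s\mapsto(4\pi s)^{-1/2}\eup^{-|x|^2/(4s)}$ is not monotone when $x\neq 0$. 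The ``prefactor bookkeeping'' you describe as the main obstacle is an artifact of this unnecessary detour: in the paper's argument the subordination integral never appears, so there is no single-versus-double copy of $(4\pi s)^{-1/2}$ to reconcile. Drop the first and last paragraphs, keep the middle one, and make the $\wedge$ step explicit, and you have the paper's proof.
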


\begin{proof}
    Noting that
    $$
        \frac{|x-y|}{\tau}\leq\frac{|x|+|y|}{\tau}
        \leq\frac{2(|x|\vee|y|)}{\tau}
        \leq|x|\vee|y|,
    $$
    it holds from the monotonicity \eqref{monoto}
    that for any $x,y\in\real^d$,
    $$
        p_t\left(\frac{x-y}{\tau}\right)\geq
        p_t(|x|\vee|y|)\geq p_t(|x|)\wedge
        p_t(|y|)\geq
        p_t(|x|)
        p_t(|y|),
    $$
    where in the last inequality we have used the
    fact that
    \[
        p_t(|x|)\vee
        p_t(|y|)\leq p_t(0)\leq1,
        \quad x,y\in\real^d. \qedhere
    \]
\end{proof}

\begin{lemma}\label{lower}
    There exists $c=c(d)>0$ such that for any $t>0$,
    $$
        p_t(x)\geq c
        \left[\phi^{-1}\left(
        \frac{1}{2t}\right)\right]^{d/2}
        \quad\text{provided}\quad
        |x|\leq\left[\phi^{-1}\left(
        \frac{2}{t}\right)\right]^{-1/2}.
    $$
\end{lemma}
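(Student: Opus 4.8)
The plan is to bound $p_t(x)$ from below by discarding most of the subordination integral and keeping only the event on which $S_t$ lies in the window supplied by Lemma~\ref{probab}. Starting from the $d$-dimensional subordination formula
$$
p_t(x)=\int_0^\infty (4\pi s)^{-d/2}\,\eup^{-|x|^2/(4s)}\,\Pp(S_t\in\dup s)
\ \geq\ \int_{I_t}(4\pi s)^{-d/2}\,\eup^{-|x|^2/(4s)}\,\Pp(S_t\in\dup s),
$$
where $I_t:=\big[\,[\phi^{-1}(2/t)]^{-1},\ [\phi^{-1}(1/(2t))]^{-1}\,\big]$, I will estimate the integrand uniformly for $s\in I_t$, pull the resulting constant out, and then use Lemma~\ref{probab} to say $\Pp(S_t\in I_t)$ is bounded below by a universal positive constant.

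For the Gaussian prefactor: on $I_t$ we have $s\le[\phi^{-1}(1/(2t))]^{-1}$, hence $s^{-d/2}\ge[\phi^{-1}(1/(2t))]^{d/2}$ and so $(4\pi s)^{-d/2}\ge(4\pi)^{-d/2}[\phi^{-1}(1/(2t))]^{d/2}$. For the exponential factor: the hypothesis $|x|\le[\phi^{-1}(2/t)]^{-1/2}$ gives $|x|^2\le[\phi^{-1}(2/t)]^{-1}$, while on $I_t$ we have $s\ge[\phi^{-1}(2/t)]^{-1}$; combining these yields $|x|^2/(4s)\le 1/4$, so $\eup^{-|x|^2/(4s)}\ge\eup^{-1/4}$. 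Therefore, for every $s\in I_t$,
$$
(4\pi s)^{-d/2}\,\eup^{-|x|^2/(4s)}\ \ge\ (4\pi)^{-d/2}\,\eup^{-1/4}\,\big[\phi^{-1}(1/(2t))\big]^{d/2}.
$$

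Inserting this pointwise bound into the restricted integral and applying Lemma~\ref{probab},
$$
p_t(x)\ \ge\ (4\pi)^{-d/2}\,\eup^{-1/4}\,\big[\phi^{-1}(1/(2t))\big]^{d/2}\,\Pp(S_t\in I_t)
\ \ge\ \frac{\eup^{3/2}-2\eup+1}{\eup(\eup-1)}\,(4\pi)^{-d/2}\,\eup^{-1/4}\,\big[\phi^{-1}(1/(2t))\big]^{d/2},
$$
which is the assertion with $c=c(d):=\tfrac{\eup^{3/2}-2\eup+1}{\eup(\eup-1)}(4\pi)^{-d/2}\eup^{-1/4}$. There is no genuine obstacle: the only point requiring care is that the window $I_t$ is chosen so that its left endpoint $[\phi^{-1}(2/t)]^{-1}$ matches exactly the quantity bounding $|x|^2$ in the hypothesis — this is precisely what forces $|x|^2/(4s)\le 1/4$ — and that $\phi^{-1}$ is read as the generalized inverse, exactly as in Lemmas~\ref{probab}--\ref{subord}.
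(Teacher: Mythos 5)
Your proof is correct and follows essentially the same route as the paper's: restrict the subordination integral to the window $I_t=\bigl[[\phi^{-1}(2/t)]^{-1},\,[\phi^{-1}(1/(2t))]^{-1}\bigr]$, bound the Gaussian factor pointwise from below on that window (using the two endpoints to control $s^{-d/2}$ and $\eup^{-|x|^2/(4s)}$, with the hypothesis on $|x|$ giving $|x|^2/(4s)\leq 1/4$), and then invoke Lemma~\ref{probab} to lower-bound $\Pp(S_t\in I_t)$ by a universal constant. The only cosmetic difference is that the paper first writes the exponential bound as $\eup^{-|x|^2\phi^{-1}(2/t)/4}$ and then substitutes the hypothesis, whereas you fold those two steps into one; the argument and the resulting constant are the same.
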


\begin{proof}
    First, for any $t>0$ and $x\in\real^d$,
    \begin{align*}
        p_t(x)&\geq(4\pi)^{-d/2}
        \int_{\left[\phi^{-1}\left(
        \frac{2}{t}\right)\right]^{-1}}
        ^{\left[\phi^{-1}\left(
        \frac{1}{2t}\right)\right]^{-1}}
        s^{-d/2}\eup^{-|x|^2/(4s)}
        \,\Pp\left(S_t\in\dup s\right)\\
        &\geq(4\pi)^{-d/2}
        \left[\phi^{-1}\left(
        \frac{1}{2t}\right)\right]^{d/2}
        \eup^{-|x|^2\phi^{-1}\left(
        \frac{2}{t}\right)/4}
        \Pp\left(
        \left[\phi^{-1}\left(
        \frac{2}{t}\right)\right]^{-1}
        \leq S_t\leq
        \left[\phi^{-1}\left(
        \frac{1}{2t}\right)\right]^{-1}
        \right).
    \end{align*}
    If $|x|\leq\left[\phi^{-1}\left(
    \frac{2}{t}\right)\right]^{-1/2}$, it holds from
    Lemma \ref{probab} that
    \[
        p_t(x)\geq(4\pi)^{-d/2}\,
        \eup^{-1/4}\,
        \frac{\eup^{3/2}-2\eup+1}{\eup(\eup-1)}
        \left[\phi^{-1}\left(
        \frac{1}{2t}\right)\right]^{d/2}.  \qedhere
    \]
\end{proof}

\begin{lemma}
    There exists $c=c(d)>0$ such that for any $t>0$,
    $$
        \int_{|y|\leq\left[\phi^{-1}\left(
        \frac{1}{2t}\right)\right]^{-1/2}}
        p_t(y)\,\dup y\geq c.
    $$
\end{lemma}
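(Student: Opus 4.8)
\section*{Proof proposal}

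The plan is to recognise $\int_{|y|\le r_0}p_t(y)\,\dup y$, with $r_0:=\big[\phi^{-1}(1/(2t))\big]^{-1/2}$, as the probability that a subordinate Brownian motion lies in the ball $B(0,r_0)$ at time $t$, and then to bound that probability from below by confining the subordinator $S_t$ via Lemma~\ref{subord}. Since $(4\pi s)^{-d/2}\eup^{-|y|^2/(4s)}$ is the density at $y$ of a centred Gaussian vector with covariance $2sI_d$, the heat-kernel representation of $p_t$ exhibits it as the density of $X_t=W_{S_t}$, where $W$ is such a Brownian motion and $S$ is the subordinator with Laplace exponent $\phi$. Hence, with $G$ a standard $d$-dimensional Gaussian vector,
$$\int_{|y|\le r_0}p_t(y)\,\dup y=\Pp\big(|W_{S_t}|\le r_0\big),\qquad \text{while}\qquad \Pp\big(|W_s|\le r_0\big)=\Pp\big(|G|\le r_0/\sqrt{2s}\big)\ \ \text{for each }s>0.$$

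First I would condition on $S_t$ and restrict to the event $\{S_t\le r_0^2\}$: there $r_0/\sqrt{2S_t}\ge 1/\sqrt2$, so $\Pp(|W_{S_t}|\le r_0\mid S_t)\ge \vartheta_d:=\Pp(|G|\le 1/\sqrt2)$, a strictly positive constant depending only on $d$. This gives
$$\int_{|y|\le r_0}p_t(y)\,\dup y\ \ge\ \vartheta_d\,\Pp\big(S_t\le r_0^2\big).$$
Next, by the definition of $r_0$ one has $r_0^2=\big[\phi^{-1}(1/(2t))\big]^{-1}=\big[\phi^{-1}(c/t)\big]^{-1}$ with $c=\tfrac12\in(0,1)$, so Lemma~\ref{subord} gives $\Pp(S_t\le r_0^2)\ge \frac{\eup^{1/2}-1}{\eup-1}>0$. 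Combining the two bounds proves the statement with $c(d)=\vartheta_d\cdot\frac{\eup^{1/2}-1}{\eup-1}$, which is independent of $t$. (The two-sided estimate of Lemma~\ref{probab} could be used in place of Lemma~\ref{subord} to the same effect.)

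The one point requiring care is the level at which $S_t$ is truncated. It is tempting to apply the pointwise lower bound of Lemma~\ref{lower} on all of $B(0,r_0)$ and multiply by the volume of the ball; but Lemma~\ref{lower} only controls $p_t$ on the strictly smaller ball $B\big(0,[\phi^{-1}(2/t)]^{-1/2}\big)$, and the ratio $\phi^{-1}(1/(2t))/\phi^{-1}(2/t)$ may degenerate to $0$ as $t\to0$ (for instance when $\phi(s)=\log(1+s)$), so that crude ``value $\times$ volume'' bound is not uniform in $t$. Passing to the probabilistic representation and truncating $S_t$ precisely at $r_0^2$ — the scale at which Lemma~\ref{subord} is available with an admissible constant — is exactly what removes this obstruction; the remaining steps are routine.
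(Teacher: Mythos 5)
Your proof is correct and follows essentially the same route as the paper: both condition on the subordinator $S_t$, truncate at the level $S_t \le \big[\phi^{-1}(1/(2t))\big]^{-1}$, and invoke Lemma~\ref{subord} with $c=1/2$. Your probabilistic phrasing (viewing $p_t$ directly as the density of $W_{S_t}$ and bounding the conditional Gaussian probability by $\Pp(|G|\le 1/\sqrt 2)$) is a cleaner packaging of the same computation that the paper carries out by explicit changes of variables in the integral, and your remark about why a crude ``Lemma~\ref{lower} $\times$ volume'' bound would not be uniform in $t$ is apt.
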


\begin{proof}
    Applying Lemma \ref{subord} with $c=1/2$, it
    is easy to obtain that
    \begin{align*}
        &\int_{|y|\leq\left[\phi^{-1}\left(
        \frac{1}{2t}\right)\right]^{-1/2}}
        p_t(y)\,\dup y\\
        &\qquad\quad=(4\pi)^{-d/2}\int_0^\infty
        \left(
        \int_{|y|\leq\left[\phi^{-1}\left(
        \frac{1}{2t}\right)\right]^{-1/2}}
        \eup^{-|y|^2/(4s)}\,\dup y
        \right)s^{-d/2}\,\Pp\left(S_t\in\dup s\right)\\
        &\qquad\quad=\frac{1}{2^{d-1}\Gamma(d/2)}
        \int_0^\infty
        \left(
        \int_0^{\left[\phi^{-1}\left(
        \frac{1}{2t}\right)\right]^{-1/2}}r^{d-1}
        \eup^{-r^2/(4s)}\,\dup r
        \right)s^{-d/2}\,\Pp\left(S_t\in\dup s\right)\\
        &\qquad\quad=\frac{1}{\Gamma(d/2)}
        \int_0^\infty
        \left(
        \int_0^{\frac{1}{4s}\left[\phi^{-1}\left(
        \frac{1}{2t}\right)\right]^{-1}}r^{-1/2}
        \eup^{-r}\,\dup r
        \right)\,\Pp\left(S_t\in\dup s\right)\\
        &\qquad\quad=\frac{1}{\Gamma(d/2)}
        \int_0^\infty
        r^{-1/2}
        \eup^{-r}
        \Pp\left(
        S_t\leq\frac{1}{4r}\left[\phi^{-1}\left(
        \frac{1}{2t}\right)\right]^{-1}
        \right)
        \,\dup r\\
        &\qquad\quad\geq\frac{1}{\Gamma(d/2)}
        \Pp\left(
        S_t\leq\left[\phi^{-1}\left(
        \frac{1}{2t}\right)\right]^{-1}
        \right)
        \int_0^{1/4}
        r^{-1/2}
        \eup^{-r}
        \,\dup r\\
        &\qquad\quad\geq\frac{1}{\Gamma(d/2)}
        \frac{\sqrt{\eup}-1}{\eup-1}
        \int_0^{1/4}
        r^{-1/2}
        \,\dup r
        =\frac{1}{\Gamma(d/2)}
        \frac{\sqrt{\eup}-1}{\eup-1}.  \qedhere
    \end{align*}
\end{proof}

\begin{lemma}\label{heat}
    There exists $c=c(d)>0$ such that
    $$
        \int_{\real^d}p_t(y)^2\,\dup y\geq
        c\left[\phi^{-1}\left(
        \frac{1}{2t}\right)\right]^{d/2},
        \quad t>0.
    $$
\end{lemma}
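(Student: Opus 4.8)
The plan is to estimate the spatial $L^2$-norm of the heat kernel from below by exploiting the pointwise lower bound already established in Lemma \ref{lower} together with the volume of the region on which that bound holds. Concretely, I would write
$$
\int_{\real^d}p_t(y)^2\,\dup y
\geq
\int_{|y|\leq\left[\phi^{-1}\left(\frac{2}{t}\right)\right]^{-1/2}}
p_t(y)^2\,\dup y,
$$
and then apply Lemma \ref{lower} to bound $p_t(y)$ from below by $c\,[\phi^{-1}(1/(2t))]^{d/2}$ on precisely this ball. Squaring gives an integrand bounded below by $c^2\,[\phi^{-1}(1/(2t))]^{d}$, and the domain of integration is a Euclidean ball of radius $[\phi^{-1}(2/t)]^{-1/2}$, whose volume is a dimensional constant times $[\phi^{-1}(2/t)]^{-d/2}$.

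Multiplying, the lower bound becomes a dimensional constant times
$$
\left[\phi^{-1}\!\left(\tfrac{1}{2t}\right)\right]^{d}
\left[\phi^{-1}\!\left(\tfrac{2}{t}\right)\right]^{-d/2},
$$
so the remaining task is to show this is at least $c\,[\phi^{-1}(1/(2t))]^{d/2}$, i.e.\ that
$$
\left[\phi^{-1}\!\left(\tfrac{1}{2t}\right)\right]^{d/2}
\geq
c'\left[\phi^{-1}\!\left(\tfrac{2}{t}\right)\right]^{d/2}
$$
for some constant $c'>0$ independent of $t$. Equivalently, one needs $\phi^{-1}(1/(2t))\geq c''\,\phi^{-1}(2/t)$. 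Since $\phi$ (hence $\phi^{-1}$) is increasing and $1/(2t)<2/t$, this inequality goes the "wrong" way at first glance, so the key point is that the ratio $\phi^{-1}(2/t)/\phi^{-1}(1/(2t))$ stays bounded as $t$ ranges over $(0,\infty)$; the two arguments differ only by the fixed multiplicative factor $4$. This is exactly a doubling-type property of $\phi^{-1}$, which holds for any Bernstein function: since $\phi$ is concave and nonnegative with $\phi(0+)\geq 0$, one has $\phi(4r)\leq 4\phi(r)$, hence $\phi^{-1}(4s)\leq 4\phi^{-1}(s)$ — wait, one must be careful, so let me restate: concavity of $\phi$ gives $\phi(\lambda r)\leq \lambda\phi(r)$ for $\lambda\geq 1$, which translates via $\phi^{-1}$ into $\phi^{-1}(\lambda s)\leq \lambda\,\phi^{-1}(s)$ for $\lambda\geq1$ (because $\phi^{-1}$ applied to both sides of $\phi(\lambda r)\leq \lambda \phi(r)$, with $s=\phi(r)$, yields $\lambda r \geq \phi^{-1}(\lambda s)$, i.e.\ $\phi^{-1}(\lambda s)\leq \lambda \phi^{-1}(s)$). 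Applying this with $\lambda=4$ and $s=1/(2t)$ gives $\phi^{-1}(2/t)\leq 4\,\phi^{-1}(1/(2t))$, which is precisely what is needed.

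Putting the pieces together yields
$$
\int_{\real^d}p_t(y)^2\,\dup y
\geq c_d\left[\phi^{-1}\!\left(\tfrac{1}{2t}\right)\right]^{d}
\left[\phi^{-1}\!\left(\tfrac{2}{t}\right)\right]^{-d/2}
\geq c_d\,4^{-d/2}\left[\phi^{-1}\!\left(\tfrac{1}{2t}\right)\right]^{d}
\left[\phi^{-1}\!\left(\tfrac{1}{2t}\right)\right]^{-d/2}
= c_d\,4^{-d/2}\left[\phi^{-1}\!\left(\tfrac{1}{2t}\right)\right]^{d/2},
$$
which is the claim with $c=c_d\,4^{-d/2}$. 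I expect the only genuinely delicate step to be the doubling estimate $\phi^{-1}(2/t)\le 4\,\phi^{-1}(1/(2t))$: it requires knowing that $\phi^{-1}$ is well-defined (monotonicity of $\phi$) and invoking concavity of $\phi$ in the slightly counterintuitive direction. Everything else — restricting the integral, squaring the pointwise bound, computing the volume of a ball — is routine. An alternative to the concavity argument, should one wish to avoid it, is to observe that the previous lemma already shows $\int_{|y|\leq[\phi^{-1}(1/(2t))]^{-1/2}}p_t(y)\,\dup y\geq c$, and combine this with the pointwise upper-type control coming from the Gaussian representation; but the concavity route is cleaner and self-contained.
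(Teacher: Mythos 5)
Your argument contains a sign error at the crucial step, and this error is fatal to the approach. You claim that concavity of $\phi$, i.e.\ $\phi(\lambda r)\leq\lambda\phi(r)$ for $\lambda\geq1$, yields $\phi^{-1}(\lambda s)\leq\lambda\,\phi^{-1}(s)$. But substituting $r=\phi^{-1}(s)$ into $\phi(\lambda r)\leq\lambda s$ and applying the increasing map $\phi^{-1}$ to both sides gives
$$
\lambda\,\phi^{-1}(s)\leq\phi^{-1}(\lambda s),
$$
which is the \emph{reverse} inequality: $\phi^{-1}$ is super-linear, not sub-linear, as one should expect since it is the inverse of a concave function. Applied with $\lambda=4$ and $s=1/(2t)$ this yields $\phi^{-1}(2/t)\geq4\,\phi^{-1}(1/(2t))$, which only bounds the ratio $\phi^{-1}(1/(2t))/\phi^{-1}(2/t)$ from \emph{above} by $1/4$, whereas your argument needs a \emph{lower} bound by a positive constant. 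Such a lower bound is a genuine doubling-type condition on $\phi^{-1}$ (equivalent, by Lemma \ref{inverse}, to the lower scaling condition, i.e.\ Assumption \ref{Assumption grow}) and is not available here: the lemma is asserted for every Bernstein function. Concretely, for the gamma subordinator $\phi(s)=\log(1+s)$ one has $\phi^{-1}(s)=\eup^{s}-1$, so as $t\to0^+$,
$$
\frac{\phi^{-1}(1/(2t))}{\phi^{-1}(2/t)}\sim\eup^{-3/(2t)}\longrightarrow0,
$$
and your lower bound $c_d\,[\phi^{-1}(1/(2t))]^{d}[\phi^{-1}(2/t)]^{-d/2}$ tends to $0$ while the target $[\phi^{-1}(1/(2t))]^{d/2}$ tends to $\infty$. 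So the approach cannot produce the claimed estimate without further hypotheses. The difficulty is that Lemma \ref{lower} only controls $p_t$ on the ball of radius $[\phi^{-1}(2/t)]^{-1/2}$, which is strictly smaller than the ball of radius $[\phi^{-1}(1/(2t))]^{-1/2}$ that actually governs the $L^2$ norm, and the gap between the two radii is uncontrolled in general.

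The paper sidesteps this mismatch entirely. The cleanest route is Plancherel: since $\hat p_t(\xi)=\eup^{-t\phi(|\xi|^2)}$, one has $\int p_t^2=\int\eup^{-2t\phi(|\xi|^2)}\,\dup\xi$, and restricting to $|\xi|\leq[\phi^{-1}(1/(2t))]^{1/2}$ and using monotonicity of $\phi$ makes the integrand at least $\eup^{-1}$, so the integral is bounded below by a dimensional constant times the volume of that ball, namely $c\,[\phi^{-1}(1/(2t))]^{d/2}$. (The paper's primary proof achieves the same thing on the physical side using Jensen's inequality and the subordinator tail bound of Lemma \ref{subord} with $c=1/2$, which is taken at the level $1/(2t)$ rather than $2/t$ and hence again avoids the ratio $\phi^{-1}(1/(2t))/\phi^{-1}(2/t)$.) The moral is that the lower bound must be organised around the scale $\phi^{-1}(1/(2t))$ throughout; importing the pointwise lower bound of Lemma \ref{lower}, which lives at the smaller scale $\phi^{-1}(2/t)$, introduces a loss that only a scaling assumption on $\phi$ could recover.
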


\begin{proof}
    By the Jensen inequality and Tonelli's theorem,
    \begin{align*}
        &\left[\phi^{-1}\left(
        \frac{1}{2t}\right)\right]^{-d/2}
        \int_{\real}p_t(y)^2\,\dup y\\
        &\qquad\geq\left[\phi^{-1}\left(
        \frac{1}{2t}\right)\right]^{-d/2}(4\pi)^{-d}
        \int_{|y|\leq\left[\phi^{-1}\left(
        \frac{1}{2t}\right)\right]^{-1/2}}
        \left(\int_0^\infty
        s^{-d/2}\eup^{-|y|^2/(4s)}
        \,\Pp(S_t\in\dup s)
        \right)^2\,\dup y\\
        &\qquad\geq\frac{d\Gamma(d/2)}{2\pi^{d/2}}\,
        (4\pi)^{-d}
        \left(\int_0^\infty\left(
        \int_{|y|\leq\left[\phi^{-1}\left(
        \frac{1}{2t}\right)\right]^{-1/2}}
        \eup^{-|y|^2/(4s)}\,\dup y
        \right)s^{-d/2}\,\Pp(S_t\in\dup s)
        \right)^2\\
        &\qquad=\frac{2d}{4^d\pi^{d/2}\Gamma(d/2)}
        \left(\int_0^\infty\left(
        \int_0^{\left[\phi^{-1}\left(
        \frac{1}{2t}\right)\right]^{-1/2}}
        \eup^{-r^2/(4s)}r^{d-1}\,\dup r
        \right)s^{-d/2}\,\Pp(S_t\in\dup s)
        \right)^2\\
        &\qquad=\frac{d}{2\pi^{d/2}\Gamma(d/2)}
        \left(\int_0^\infty\left(
        \int_0^{\frac{1}{4s}\left[\phi^{-1}\left(
        \frac{1}{2t}\right)\right]^{-1}}
        \eup^{-z}z^{d/2-1}\,\dup z
        \right)\,\Pp(S_t\in\dup s)
        \right)^2\\
        &\qquad=\frac{d}{2\pi^{d/2}\Gamma(d/2)}
        \left(\int_0^\infty\eup^{-z}z^{d/2-1}
        \Pp\left(S_t\leq \frac{1}{4z}
        \left[\phi^{-1}\left(
        \frac{1}{2t}\right)\right]^{-1}\right)
        \,\dup z\right)^2\\
        &\qquad\geq\frac{d}{2\pi^{d/2}\Gamma(d/2)}
        \left(
        \Pp\left(S_t\leq
        \left[\phi^{-1}\left(
        \frac{1}{2t}\right)\right]^{-1}\right)
        \int_0^{1/4}\eup^{-z}z^{d/2-1}
        \,\dup z\right)^2\\
        &\qquad\geq\frac{d}{2\pi^{d/2}\Gamma(d/2)}
        \left(
        \frac{\sqrt{\eup}-1}{\eup-1}
        \int_0^{1/4}\eup^{-z}z^{d/2-1}
        \,\dup z\right)^2,
    \end{align*}
    where in the last inequality we have
    used Lemma \ref{subord} with
    $c=1/2$. This implies the desired lower bound.
\end{proof}

\begin{proof}[Alternate proof of Lemma \ref{heat}]
By Plancharel's theorem, we obtain
$$
\int_{\real^d}p_t(y)^2\,\dup y=\int_{\real^d}\hat{p}_t(\xi)^2\,
\dup \xi=\int_{\real^d}\eup^{-2t\phi(|\xi|^2)}\,\dup \xi
=\frac{2\pi^{d/2}}{\Gamma(d/2)}
\int_{0}^\infty\eup^{-2t\phi(r^2)}r^{d-1}\,\dup r.
$$
Since $\phi$ is an increasing function, this implies that
\begin{align*}
    \frac{\Gamma(d/2)}{2\pi^{d/2}}
    \int_{\real^d}p_t(y)^2\,\dup y
    &\geq\int_0^
    {\left[\phi^{-1}\left(
        \frac{1}{2t}\right)\right]^{1/2}}
        \eup^{-2t\phi(r^2)}r^{d-1}\,\dup r\\
    &\geq\exp\left[
        -2t\phi\circ\phi^{-1}\left(
        \frac{1}{2t}\right)
        \right]
        \int_0^
    {\left[\phi^{-1}\left(
        \frac{1}{2t}\right)\right]^{1/2}}
        r^{d-1}\,\dup r\\
    &=\frac{1}{\eup d}
    \left[\phi^{-1}\left(
        \frac{1}{2t}\right)\right]^{d/2},
\end{align*}
as required.
\end{proof}

\begin{proposition}\label{dw3d}
    Suppose that  Assumptions \ref{Assumption 3} holds.
    Then there exists
    $t_0>0$ and $c=c(d)>0$ such that
    $$
        (\mathcal{G}u)_{t+t_0}(x)\geq cK_{u_0}
        \quad\text{for all $t\in(0,t_0]$ and
        $x\in B(0,1)$}.
    $$
\end{proposition}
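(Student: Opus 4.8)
The plan is to bound $(\mathcal{G}u)_{t+t_0}(x)$ from below by discarding everything outside the unit ball in the defining integral and then invoking the pointwise heat-kernel estimate of Lemma \ref{lower}. Since $u_0\geq 0$,
\[
(\mathcal{G}u)_{t+t_0}(x)=\int_{\real^d}p_{t+t_0}(x-y)u_0(y)\,\dup y\geq\int_{B(0,1)}p_{t+t_0}(x-y)u_0(y)\,\dup y,
\]
so it suffices to exhibit a constant $c=c(d)>0$ with $p_{t+t_0}(x-y)\geq c$ for all $x,y\in B(0,1)$ and all $t\in(0,t_0]$; the asserted bound then follows with $K_{u_0}=\int_{B(0,1)}u_0$.

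First I would fix the time scale by setting $t_0:=2/\phi(1/4)$ (any $t_0\geq 2/\phi(1/4)$ works, and one may take the maximum with $1$ if a lower bound on $t_0$ is wanted elsewhere). For $t\in(0,t_0]$ put $s:=t+t_0\in(t_0,2t_0]$. If $x,y\in B(0,1)$ then $|x-y|\leq 2$, and since $\phi$ is increasing the inequality $s\geq 2/\phi(1/4)$ is equivalent to $\big[\phi^{-1}(2/s)\big]^{-1/2}\geq 2$; hence $|x-y|\leq\big[\phi^{-1}(2/s)\big]^{-1/2}$, which is exactly the hypothesis of Lemma \ref{lower}. That lemma therefore yields
\[
p_s(x-y)\geq c(d)\left[\phi^{-1}\!\left(\tfrac{1}{2s}\right)\right]^{d/2}.
\]

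Next I would make this lower bound uniform in $t$. Because $s\leq 2t_0$ and $\phi^{-1}$ is increasing, $\phi^{-1}(1/(2s))\geq\phi^{-1}(1/(4t_0))>0$; here Assumption \ref{Assumption 3} is what guarantees $\phi(s)\to\infty$, so that $\phi^{-1}$ is genuinely defined on all of $(0,\infty)$ and this value is legitimate (for $1/(4t_0)=\phi(1/4)/8$ it is in any case automatic, as $\phi(0+)=0$ in the subordinator normalization used for the heat kernel). Consequently $p_{t+t_0}(x-y)\geq c(d)\,[\phi^{-1}(1/(4t_0))]^{d/2}=:c>0$, a constant depending only on $d$ and the fixed function $\phi$. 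Substituting into the integral bound of the first paragraph gives $(\mathcal{G}u)_{t+t_0}(x)\geq c\int_{B(0,1)}u_0(y)\,\dup y=cK_{u_0}$ for all $t\in(0,t_0]$ and $x\in B(0,1)$, as required.

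I do not expect a genuine obstacle: the argument is essentially a correct bookkeeping of Lemma \ref{lower}. The one delicate point is the calibration of $t_0$ — it must be large enough that the radius $2$ falls inside the effective spatial range $[\phi^{-1}(2/(t+t_0))]^{-1/2}$ of Lemma \ref{lower}, and this has to hold uniformly over the whole interval $t\in(0,t_0]$, which is precisely why $t_0$ is pinned to $1/\phi(1/4)$ and why the times really in play are $t+t_0\in(t_0,2t_0]$ rather than $(0,t_0]$.
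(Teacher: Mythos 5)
Your proof is correct, and it takes a genuinely different and shorter route than the paper's. The paper's argument first works at the single time $t_0$: it chooses $t_0$ large enough that $p_{t_0}(0)<1$ — and this is precisely where Assumption~\ref{Assumption 3} enters, via Lemma~\ref{moment} applied to $p_{t_0}(0)=(4\pi)^{-d/2}\E S_{t_0}^{-d/2}$ — so that Lemma~\ref{factorlower} factorizes $p_{t_0}(x-y)\geq p_{t_0}(2x)p_{t_0}(2y)$; it then bounds $(\mathcal{G}u)_{t_0}(x)$, and propagates the estimate from $t_0$ to $t+t_0$ through the monotonicity \eqref{monoto} and the Chapman--Kolmogorov identity, finishing with a second application of Lemma~\ref{lower} to $p_{t+t_0}(2x)$. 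You bypass both the factorization and the semigroup step: since the conclusion only concerns $x\in B(0,1)$, restricting the convolution to $y\in B(0,1)$ forces $|x-y|\le 2$, and the calibration $t_0=2/\phi(1/4)$ makes this radius sit below the threshold $\left[\phi^{-1}\left(2/(t+t_0)\right)\right]^{-1/2}$ uniformly over $t+t_0\in(t_0,2t_0]$, so a single application of Lemma~\ref{lower} to $p_{t+t_0}(x-y)$ already gives a uniform positive lower bound. A pleasant byproduct, which you observe, is that your route does not actually invoke Assumption~\ref{Assumption 3}: the only values of $\phi^{-1}$ that appear lie strictly between $0$ and $\phi(1/4)$, hence are always in the range of $\phi$; the paper needed the assumption only to secure $p_{t_0}(0)<1$ for the factorization lemma, a step you never take. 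Both proofs are sound; yours is tighter and formally proves the same conclusion under a weaker hypothesis.
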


\begin{proof}
    Since
    $$
        p_t(0)=(4\pi)^{-d/2}\int_0^\infty s^{-d/2}
        \,\Pp\left(S_t\in\dup s\right)
        =(4\pi)^{-d/2}\E S_t^{-d/2},
    $$
    it follows from Assumption \ref{Assumption 3} and
    Lemma \ref{moment} that we can pick up
    $t_0\geq\frac{2}{\phi(1/4)}$ (and so, $\left[\phi^{-1}\left(
    2/t_0\right)\right]^{-1/2}\geq2$)
    such that $p_{t_0}(0)<1$. By Lemma \ref{factorlower},
    $$
        p_{t_0}(x-y)=p_{t_0}
        \left(\frac{2x-2y}{2}
        \right)
        \geq p_{t_0}(2x)p_{t_0}(2y),
    $$
    which, together with Lemma \ref{lower}, implies that
    \begin{align*}
        (\mathcal{G}u)_{t_0}(x)&\geq
        p_{t_0}(2x)\int_{\real^d}
        p_{t_0}(2y)u_0(y)\,\dup y\\
        &\geq p_{t_0}(2x)\int_{B(0,1)}
        p_{t_0}(2y)u_0(y)\,\dup y\\
        &\geq c
        \left[\phi^{-1}\left(
        \frac{1}{2t_0}\right)\right]^{d/2}
        p_{t_0}(2x)K_{u_0}
    \end{align*}
    for some constant $c=c(d)>0$.
    By the monotonicity \eqref{monoto} and
    the semigroup property,
    \begin{align*}
        (\mathcal{G}u)_{t_0+t}(x)
        &=\int_{\real^d}p_t(x-z)
        (\mathcal{G}u)_{t_0}(z)\,\dup z\\
        &\geq cK_{u_0}
        \left[\phi^{-1}\left(
        \frac{1}{2t_0}\right)\right]^{d/2}
        \int_{\real^d}p_t(x-z)
        p_{t_0}(2z)\,\dup z\\
        &\geq cK_{u_0}
        \left[\phi^{-1}\left(
        \frac{1}{2t_0}\right)\right]^{d/2}
        \int_{\real^d}p_t(2x-2z)
        p_{t_0}(2z)\,\dup z\\
        &=c_1K_{u_0}2^{-d}
        \left[\phi^{-1}\left(
        \frac{1}{2t_0}\right)\right]^{d/2}
        \int_{\real^d}p_t(2x-y)
        p_{t_0}(y)\,\dup y\\
        &=cK_{u_0}2^{-d}
        \left[\phi^{-1}\left(
        \frac{1}{2t_0}\right)\right]^{d/2}
        p_{t+t_0}(2x).
    \end{align*}
    For $x\in B(0,1)$ and $t\in(0,t_0]$,
    $$
        |2x|\leq2\leq\left[\phi^{-1}\left(
        \frac{2}{t_0}\right)\right]^{-1/2}
        \leq\left[\phi^{-1}\left(
        \frac{2}{t+t_0}\right)\right]^{-1/2},
    $$
    which, together with Lemma \ref{lower}, yields that
    $$
        p_{t+t_0}(2x)\geq c
        \left[\phi^{-1}\left(
        \frac{1}{2(t+t_0)}\right)\right]^{d/2}
        \geq c
        \left[\phi^{-1}\left(
        \frac{1}{4t_0}\right)\right]^{d/2}.
    $$
    Thus, for all $x\in B(0,1)$ and $t\leq t_0$,
    \[
        (\mathcal{G}u)_{t_0+t}(x)
        \geq c^2K_{u_0}2^{-d}
        \left[\phi^{-1}\left(
        \frac{1}{2t_0}\right)\phi^{-1}\left(
        \frac{1}{4t_0}\right)\right]^{d/2}. \qedhere
    \]
\end{proof}

Denote by $|\mathcal{A}|$ the Lebesgue measure of a
measurable subset $\mathcal{A}\subset\real^d$.

\begin{lemma}\label{volume}
    Let $0<r\leq R/2$, $x\in B(0,R)$, and $\mathcal{A}:=B(0,R)\cap B(x,r)$. Then there
    exist $c_i=c_i(d)>0$, $i=1,2$, such that
    $$
        c_1r^d\leq|\mathcal{A}|\leq c_2r^d.
    $$
\end{lemma}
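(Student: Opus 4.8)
The plan is to give two-sided volume bounds on the ``lens'' $\mathcal{A}=B(0,R)\cap B(x,r)$ with $0<r\le R/2$ and $x\in B(0,R)$, with constants depending only on the dimension $d$. The upper bound is trivial: $\mathcal{A}\subset B(x,r)$, so $|\mathcal{A}|\le |B(x,r)|=\omega_d r^d$, where $\omega_d:=|B(0,1)|$; take $c_2=\omega_d$. The content is the lower bound, and the key geometric observation is that, because $r\le R/2$, the ball $B(x,r)$ cannot poke ``too far'' outside $B(0,R)$: a definite fraction of $B(x,r)$ (a fraction depending only on $d$) always lies inside $B(0,R)$.

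To make this precise I would distinguish two cases according to the location of $x$. First, if $|x|\le R/2$, then for any $z\in B(x,r/2)$ we have $|z|\le |x|+r/2\le R/2+R/4<R$, so $B(x,r/2)\subset B(0,R)$ and hence $B(x,r/2)\subset\mathcal{A}$, giving $|\mathcal{A}|\ge \omega_d (r/2)^d = 2^{-d}\omega_d\, r^d$. Second, if $R/2<|x|<R$, consider the point $x':=(1-\tfrac{r}{2R})x$ obtained by pulling $x$ slightly toward the origin; then $|x'|=|x|-\tfrac{r}{2}|x|/R\le |x|-\tfrac{r}{4}$ since $|x|>R/2$, so actually I will instead move inward by a fixed amount: set $x' := \bigl(1-\tfrac{r}{2|x|}\bigr)x$, so that $x'$ lies on the segment from $0$ to $x$ with $|x-x'|=r/2$ and $|x'|=|x|-r/2<R-r/2$. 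Then for $z\in B(x',r/2)$ we get $|z|\le |x'|+r/2 = |x| < R$, so $B(x',r/2)\subset B(0,R)$; and $|z-x|\le |z-x'|+|x'-x|\le r/2+r/2=r$, so $B(x',r/2)\subset B(x,r)$. Hence $B(x',r/2)\subset\mathcal{A}$ and again $|\mathcal{A}|\ge 2^{-d}\omega_d\, r^d$. (One must check $|x|>0$ in this case, which holds since $|x|>R/2>0$.) In both cases $c_1:=2^{-d}\omega_d$ works, and combining with the upper bound gives $c_1 r^d\le |\mathcal{A}|\le c_2 r^d$ with $c_i=c_i(d)$.

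The only mild obstacle is making sure the inward-shifted ball $B(x',r/2)$ is genuinely contained in \emph{both} $B(0,R)$ and $B(x,r)$ simultaneously; this is exactly why the radius is halved (so the shift by $r/2$ and the residual radius $r/2$ together stay within the original radius $r$ of $B(x,r)$) and why the hypothesis $r\le R/2$ is used (to guarantee $|x'|=|x|-r/2$ is comfortably below $R$, and in the first case to guarantee $R/2+r/2\le R$). No further estimates are needed; the argument is elementary and dimension-uniform.
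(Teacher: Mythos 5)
Your proof is correct and follows essentially the same approach as the paper: upper bound from $\mathcal{A}\subset B(x,r)$, and for the lower bound, split on whether $|x|$ is below or above $R/2$, in the latter case shifting to a point $\zeta$ on the segment from $0$ to $x$ at distance $r/2$ from $x$ and checking $B(\zeta,r/2)\subset\mathcal{A}$. The only cosmetic difference is that in the case $|x|<R/2$ the paper notes that actually $\mathcal{A}=B(x,r)$ (since $B(x,r)\subset B(0,R)$ when $r\le R/2$), whereas you use the slightly weaker but equally valid inclusion $B(x,r/2)\subset\mathcal{A}$.
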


\begin{proof}
    Since $\mathcal{A}\subset B(x,r)$, the upper bound
    is clear. It remains to prove the lower bound.
    If $|x|<R/2$, then it is easy to see that
    $\mathcal{A}=B(x,r)$ and the claim follows. If
    $R/2\leq |x|<R$, taking $\zeta$ on the line segment
    between $0$ and $x$ such
    that $|x-\zeta|=r/2$, then we have
    $B(\zeta,r/2)\subset\mathcal{A}$
    and thus $|\mathcal{A}|\geq|B(\zeta,r/2)|
    =c\,r^d$ for some $c=c(d)>0$; see Figure \ref{fig:illustration} for the illustration.
    \begin{figure}\label{fig:illustration}
    \begin{center}
        \includegraphics[width = .6\textwidth]{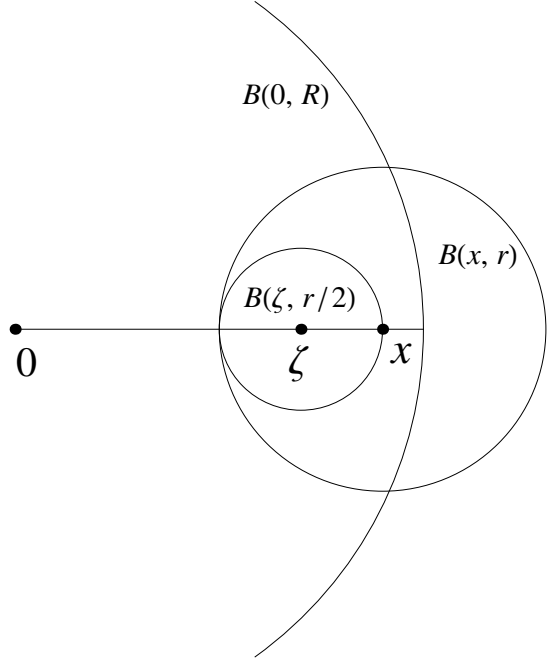}
    \end{center}
    \caption{Illustration of the proof of Lemma \ref{volume}}
    \end{figure}
\end{proof}

Let $\Phi:(0,\infty)\rightarrow(0,\infty)$ be
the strictly increasing function defined by
$$
    \Phi(r):=\frac{1}{\phi(r^{-2})},\quad r>0.
$$

\begin{proposition}\label{prop-kernel}
Suppose that Assumptions \ref{Assumption grow} and \ref{color} hold for some $R>0$.
Then there exists $c=c(d,R)>0$ such that
\begin{equation*}
\int_{B(0,\,R)\times B(0,\,R)}p_{t-s}(x_1-y_1)p_{t-s}(x_2-y_2)f(y_1,\,y_2)\,\d y_1\d y_2\geq c\,K_{R,f}
\end{equation*}
for all $x_1,\,x_2 \in B(0,\,R)$
and  $0\leq s\leq t\leq 2\Phi(R/2)$, where R is the same as that in Assumption \ref{color}.
\end{proposition}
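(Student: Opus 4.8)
The plan is to reduce the double integral to a product of one‑dimensional heat‑kernel integrals over a suitable small ball, then bound each factor below using the lower heat‑kernel estimate of Lemma \ref{lower}, with the time argument controlled by the hypothesis $t-s\leq 2\Phi(R/2)$. First I would use Assumption \ref{color} to pull out the correlation: since $f(y_1,y_2)\geq K_{R,f}$ for $y_1,y_2\in B(0,R)$, the left‑hand side is bounded below by
\[
K_{R,f}\left(\int_{B(0,R)}p_{t-s}(x_1-y_1)\,\d y_1\right)\left(\int_{B(0,R)}p_{t-s}(x_2-y_2)\,\d y_2\right),
\]
so it suffices to show that for each $x\in B(0,R)$ and each $0\le \tau:=t-s\le 2\Phi(R/2)$ one has $\int_{B(0,R)}p_\tau(x-y)\,\d y\ge c=c(d,R)>0$.

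For this one‑factor estimate I would localize: by the monotonicity \eqref{monoto}, $p_\tau(x-y)\ge c'[\phi^{-1}(1/(2\tau))]^{d/2}$ whenever $|x-y|\le[\phi^{-1}(2/\tau)]^{-1/2}$ (Lemma \ref{lower}), so restricting the integral to the set $\mathcal{A}:=B(0,R)\cap B(x,r_\tau)$ with $r_\tau:=\tfrac12[\phi^{-1}(2/\tau)]^{-1/2}$ gives
\[
\int_{B(0,R)}p_\tau(x-y)\,\d y\ \ge\ c'\,[\phi^{-1}(1/(2\tau))]^{d/2}\,|\mathcal{A}|.
\]
The key point is that the time constraint makes $r_\tau$ comparable to $R$: from $\tau\le 2\Phi(R/2)=2/\phi((R/2)^{-2})$ we get $2/\tau\ge\phi((R/2)^{-2})$, hence $\phi^{-1}(2/\tau)\ge (R/2)^{-2}$ and therefore $[\phi^{-1}(2/\tau)]^{-1/2}\le R/2$, i.e.\ $r_\tau\le R/4\le R/2$. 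This lets me invoke Lemma \ref{volume} (with $r=r_\tau$) to obtain $|\mathcal{A}|\ge c_1 r_\tau^d = c_1 2^{-d}[\phi^{-1}(2/\tau)]^{-d/2}$. Combining, the factor is bounded below by $c''\,[\phi^{-1}(1/(2\tau))]^{d/2}[\phi^{-1}(2/\tau)]^{-d/2}$.

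It remains to show this ratio is bounded below by a positive constant uniformly over $\tau\in(0,2\Phi(R/2)]$, and this is where Assumption \ref{Assumption grow} enters — this is the main obstacle, since a priori $\phi^{-1}(1/(2\tau))$ and $\phi^{-1}(2/\tau)$ differ by a factor $4$ in the argument. By Lemma \ref{inverse}, Assumption \ref{Assumption grow} (condition iii) for $\phi$) is equivalent to the doubling‑type bound $\limsup_{r\to\infty}\phi^{-1}(4r)/\phi^{-1}(r)<\infty$; more precisely, since here we only need the comparison for $r=1/(2\tau)\ge 1/(4\Phi(R/2))$ bounded away from $0$ but possibly large, the scaling condition for $\phi^{-1}$ gives $\phi^{-1}(4r)\le C\phi^{-1}(r)$ for all such $r$ with $C=C(R)$ absorbing the behavior near the (bounded) lower endpoint by continuity and strict monotonicity of $\phi^{-1}$. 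Hence $[\phi^{-1}(1/(2\tau))]^{d/2}[\phi^{-1}(2/\tau)]^{-d/2}\ge C^{-d/2}$, and tracing the constants back yields $\int_{B(0,R)}p_\tau(x-y)\,\d y\ge c(d,R)>0$, which completes the proof. One should double‑check that the $\limsup$ statement can be upgraded to a genuine uniform bound on the relevant range of $\tau$; the compactness of $[\epsilon,2\Phi(R/2)]$ for the large‑$\tau$ end together with the scaling inequality for the small‑$\tau$ end handles this.
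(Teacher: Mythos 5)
Your proposal is correct and takes essentially the same route as the paper's: pull out $K_{R,f}$, restrict each heat-kernel factor to a small ball via Lemmas~\ref{lower} and~\ref{volume}, use the time constraint $\tau\le 2\Phi(R/2)$ to ensure $[\phi^{-1}(2/\tau)]^{-1/2}\le R/2$, and then control the ratio $\phi^{-1}(1/(2\tau))/\phi^{-1}(2/\tau)$ uniformly via Lemma~\ref{inverse} and Assumption~\ref{Assumption grow}. The upgrade from the $\limsup$ bound to a uniform doubling constant that you flag at the end is indeed the only point requiring care, and is handled exactly as you say (compactness of the bounded $r$-range together with the scaling inequality for large $r$); the paper leaves this step implicit, and aside from the harmless extra factor $\tfrac12$ in your $r_\tau$, the two arguments coincide.
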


\begin{proof}
First, Assumption \ref{color} gives
\begin{align*}
\int_{B(0,\,R)\times B(0,\,R)}&p_{t-s}(x_1-y_1)p_{t-s}(x_2-y_2)f(y_1,\,y_2)\,\d y_1\d y_2\\
&\geq K_{R,f}\int_{B(0,\,R)\times B(0,\,R)}p_{t-s}(x_1-y_1)p_{t-s}(x_2-y_2)\,\d y_1\d y_2.
\end{align*}
Let $0\leq s\leq t\leq 2\Phi(R/2)$. For $i=1,2$, set
\begin{equation*}
\mathcal{A}_i:=B(0,R)\cap B\Big(
x_i,\left[\phi^{-1}\left( 2/(t-s)\right)\right]^{-1/2}
\Big).
\end{equation*}
Noting that
  \begin{equation}\label{jh5fd}
      \frac{1}{2(t-s)}\geq\frac{1}{2t}
      \geq\frac{1}{4\Phi(R/2)}
      =\frac14\phi((2/R)^2),
  \end{equation}
one has
$$
    \left[\phi^{-1}\left( 2/(t-s)\right)\right]^{-1/2}
    \leq \frac R2.
$$
By Lemma \ref{volume}, there exists $c_1=c_1(d)>0$
such that
$$
|\mathcal{A}_i|\geq c_1
\left[\phi^{-1}\left( 2/(t-s)\right)\right]^{-d/2}.
$$
Then it follows from the heat kernel estimate
given in Lemma  \ref{lower} that
for some $c_2=c_2(d)>0$
\begin{align*}
\int_{B(0,\,R)\times B(0,\,R)}&p_{t-s}(x_1-y_1)p_{t-s}(x_2-y_2)\,\d y_1\d y_2\\
&\geq \int_{\mathcal{A}_1\times \mathcal{A}_2}p_{t-s}(x_1-y_1)p_{t-s}(x_2-y_2)\,\d y_1\d y_2\\
&\geq c_2\left[\phi^{-1}(1/(2(t-s)))\right]^d
|\mathcal{A}_1|\cdot|\mathcal{A}_2|\\
&\geq c_1^2c_2\left[\frac{\phi^{-1}(1/(2(t-s)))}
{\phi^{-1}(2/(t-s))}\right]^d.
\end{align*}
 By Lemma \ref{inverse}, there exists
  $c_3=c_3(R)>0$ such that
  $$
      \frac{\phi^{-1}(4r)}{\phi^{-1}(r)}\leq c_3
      \quad \text{for all $r\geq\frac14\phi((2/R)^2)$},
  $$
  which, together with \eqref{jh5fd}, implies
  $$
      \frac{\phi^{-1}(1/(2(t-s)))}{\phi^{-1}(2/(t-s))}
      \geq\frac{1}{c_3}.
  $$
 Combining all the estimates above,
 we get the required inequality.
\end{proof}

For an open subset $D\subset\real^d$, let
$p_{D,t}(x,y)$ denote the heat kernel of the
process (associated with the heat kernel $p_t(x,y)$)
killed upon exiting $D$.
According to Lemma \ref{equiva}, the following
lemma is a particular
case of \cite[Proposition 3.4]{kim-mimica-2018}.

\begin{lemma}\label{lowerbound}
    Suppose that $\phi$ is given by \eqref{bern}
    with $\phi(0+)=b=0$ and satisfies
    Assumption \ref{Assumption grow}.
    For any $R>0$, if
    Then
    for any $R>0$, there exists $c=c(d,R)>0$ such that
    $$
        p_{B(0,R),t}(x,y)\geq c\left[\Phi^{-1}(t)\right]^{-d}
    $$
    for all $0<t\leq\Phi(R/8)$ and $x,y\in B(0,R/2)$
    with $|x-y|\leq\Phi^{-1}(t)$.
\end{lemma}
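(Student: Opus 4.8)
The plan is to obtain this near-diagonal lower bound as a direct consequence of the Dirichlet heat kernel estimate of Kim and Mimica \cite[Proposition 3.4]{kim-mimica-2018}, which is established for subordinate Brownian motions whose Laplace exponent obeys a weak lower scaling condition. Since $\phi$ in \eqref{bern} with $\phi(0+)=b=0$ is precisely the Laplace exponent of the driftless, killing-free subordinator $S_t$, the process generated by $\mathcal{L}=-\phi(-\Delta)$ and killed on exiting $B(0,R)$ is exactly the killed subordinate Brownian motion to which that result applies; the whole task is to check that Assumption \ref{Assumption grow} supplies the required scaling hypothesis and then to match notation.

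First I would verify the scaling. Being a Bernstein function with $\phi(0+)=0$, the function $\phi$ is non-decreasing, concave and vanishes at the origin, so it automatically satisfies the upper scaling $\phi(\lambda r)\le\lambda\,\phi(r)$ for all $\lambda\ge1$, $r>0$ (write $r=\lambda^{-1}(\lambda r)+(1-\lambda^{-1})\cdot 0$ and use concavity). For the lower scaling, apply Lemma \ref{equiva} to the non-decreasing function $g=\phi$: Assumption \ref{Assumption grow}, i.e.\ $\liminf_{r\to\infty}\phi(\lambda r)/\phi(r)>1$ for some $\lambda>1$, is equivalent to the existence of $a\ge0$, $\gamma>0$ and $C_L\in(0,1]$ with $\phi(\lambda r)/\phi(r)\ge C_L\lambda^\gamma$ for all $\lambda\ge1$ and $r>a$. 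This is exactly the weak lower scaling condition used in \cite{kim-mimica-2018}, so their Proposition 3.4 is available for our $\phi$.

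Next I would translate the conclusion into the stated form. With $\Phi(r)=1/\phi(r^{-2})$, the function $\Phi$ is strictly increasing on $(0,\infty)$ (as recorded in the excerpt) and its inverse satisfies $\Phi^{-1}(t)=[\phi^{-1}(1/t)]^{-1/2}$; this is the intrinsic spatial scale at time $t$ that appears in the Kim--Mimica estimate, whose diagonal lower bound reads $c\,[\Phi^{-1}(t)]^{-d}=c\,[\phi^{-1}(1/t)]^{d/2}$ on the near-diagonal set $|x-y|\le\Phi^{-1}(t)$. One then only has to line up the admissible ranges: the small-time restriction becomes $0<t\le\Phi(R/8)$, and requiring $x,y\in B(0,R/2)$ keeps them at distance $\gtrsim R$ from $\partial B(0,R)$ while $\Phi^{-1}(t)\le R/8$, so the killing has not yet destroyed the mass and the constant $c=c(d,R)$ is uniform. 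As a consistency check, Lemma \ref{lower} already gives the corresponding free-kernel bound $p_t(x)\ge c\,[\phi^{-1}(1/(2t))]^{d/2}$ with the same scaling, which is the $R=\infty$ version of the claim.

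The main obstacle here is bookkeeping rather than substance: one must make sure the precise hypotheses of \cite[Proposition 3.4]{kim-mimica-2018} (their weak scaling, together with the structural facts $\phi(0+)=b=0$) are met in the exact form they demand, and that the various thresholds — $R/8$ here versus the radius used there, the time cutoff, and the near-diagonal constraint — are consistent after the change of variables $\Phi(r)=1/\phi(r^{-2})$. If a self-contained argument were preferred, one could instead combine the interior lower bound of Lemma \ref{lower} with the strong Markov identity $p_{B(0,R),t}(x,y)=p_t(x-y)-\Ee_x\!\big[p_{t-\tau}(X_\tau-y);\,\tau\le t\big]$, where $\tau$ is the exit time from $B(0,R)$; the crux would then be a uniform bound $\Pp_x(\tau\le t)\le 1/2$ for $x\in B(0,R/2)$ and $t\le\Phi(R/8)$, coming from the exit-time estimate $\Pp_x(\tau_{B(x,\rho)}\le t)\lesssim t\,\phi(\rho^{-2})$. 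Since making that route rigorous is essentially the content of the cited proposition, invoking it directly is the cleaner option.
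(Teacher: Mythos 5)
Your approach is exactly the one the paper takes: the authors state that, by Lemma~\ref{equiva}, Assumption~\ref{Assumption grow} gives the weak lower scaling condition required by Kim and Mimica, and then cite \cite[Proposition 3.4]{kim-mimica-2018} directly, offering no further proof. Your additional remarks on the change of variables $\Phi(r)=1/\phi(r^{-2})$ and the alternative strong-Markov route are fine but go beyond what the paper records.
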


For $R>0$, $t\geq0$ and $x\in\real^d$, let
\begin{equation*}
(\cG_{B(0,R)}u)_t(x):=\int_{B(0,\,R)}p_{D,t}(x,y)u_0(y)\,\d y.
\end{equation*}

\begin{proposition}\label{determ-dirich}
Suppose that $\phi$ is given by \eqref{bern}
    with $\phi(0+)=b=0$ and
satisfies Assumption \ref{Assumption grow}.
Let $R>0$ and $\varrho:=\inf_{x\in B(0,\,R/2)}u_0(x)>0$.
Then there exists $c=c(d,R)>0$ such that
for all $x\in B(0,\,R/2)$ and $t\leq\Phi(R/8)$,
$$
(\cG_{B(0,R)}u)_t(x)\geq c.
$$
\end{proposition}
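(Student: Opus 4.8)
The plan is to bound the integral defining $(\cG_{B(0,R)}u)_t(x)$ from below by restricting the domain of integration to the region where simultaneously $u_0$ is controlled from below and the killed heat kernel enjoys the on-diagonal lower bound of Lemma \ref{lowerbound}. Since $u_0\geq\varrho$ on $B(0,R/2)$, for every $x\in B(0,R/2)$ and $t>0$ we have
\[
(\cG_{B(0,R)}u)_t(x)=\int_{B(0,R)}p_{B(0,R),t}(x,y)u_0(y)\,\d y
\geq\varrho\int_{B(0,R/2)}p_{B(0,R),t}(x,y)\,\d y.
\]
(For $t=0$ the claim is trivial, as $(\cG_{B(0,R)}u)_0(x)=u_0(x)\geq\varrho$, so from now on assume $t\in(0,\Phi(R/8)]$.)

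Next I would fix such a $t$ and set $r:=\Phi^{-1}(t)$. Because $\Phi$ is strictly increasing, $0<r\leq\Phi^{-1}(\Phi(R/8))=R/8\leq R/4$. Introduce the set $\mathcal{A}:=B(0,R/2)\cap B(x,r)$. On the one hand, applying Lemma \ref{volume} with the ball $B(0,R/2)$ in place of $B(0,R)$ and radius $r\leq R/4=(R/2)/2$ yields $c_1=c_1(d)>0$ with $|\mathcal{A}|\geq c_1 r^d=c_1[\Phi^{-1}(t)]^d$. On the other hand, every $y\in\mathcal{A}$ lies in $B(0,R/2)$ and satisfies $|x-y|\leq r=\Phi^{-1}(t)$, so Lemma \ref{lowerbound} applies and gives $p_{B(0,R),t}(x,y)\geq c\,[\Phi^{-1}(t)]^{-d}$ for such $y$, with $c=c(d,R)>0$. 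Hence
\[
\int_{B(0,R/2)}p_{B(0,R),t}(x,y)\,\d y
\geq\int_{\mathcal{A}}p_{B(0,R),t}(x,y)\,\d y
\geq c\,[\Phi^{-1}(t)]^{-d}|\mathcal{A}|\geq c\,c_1,
\]
and combining with the previous display gives $(\cG_{B(0,R)}u)_t(x)\geq \varrho\, c\, c_1$, a positive constant depending only on $d$, $R$ (and the fixed $\varrho$), uniformly over $x\in B(0,R/2)$ and $t\leq\Phi(R/8)$.

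The argument is essentially bookkeeping once Lemmas \ref{volume} and \ref{lowerbound} are available, and I do not expect any serious obstacle. The only point requiring attention is verifying that the radius $r=\Phi^{-1}(t)$ is small enough — namely $r\leq R/4$ — to legitimately invoke the volume estimate for the ball $B(0,R/2)$; this is precisely what the hypothesis $t\leq\Phi(R/8)$ secures. The structural assumptions $\phi(0+)=b=0$ and Assumption \ref{Assumption grow} enter only through Lemma \ref{lowerbound}, whose applicability must be checked against those hypotheses.
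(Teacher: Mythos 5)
Your proof is correct and follows essentially the same route as the paper: restrict the integral to the subset of $B(0,R/2)$ at distance at most $\Phi^{-1}(t)$ from $x$, apply the killed-kernel lower bound of Lemma \ref{lowerbound} there, and then use the volume estimate of Lemma \ref{volume}. You have simply spelled out the verification that $\Phi^{-1}(t)\leq R/8\leq R/4$ (so Lemma \ref{volume} applies with radius $R/2$), a step the paper leaves implicit.
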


\begin{proof}
The proof is quite straightforward. We use
Lemma \ref{lowerbound} to see
that for $t\leq \Phi(R/8)$,
\begin{align*}
(\cG_{B(0,R)}u)_t(x)&\geq \int_{B(0,\,R/2)}p_{B(0,R),t}
(x,y)u_0(y)\,\d y\\
&\geq\varrho
\int_{\{y\in B(0,\,R/2)\,:\, |x-y|\leq \Phi^{-1}(t) \}}p_{B(0,R),t}(x,y)\,\d y\\
&\geq\varrho c\big[\Phi^{-1}(t)\big]^{-d}
\cdot\big|
B(0,R/2)\cap B\big(x,\Phi^{-1}(t)\big)
\big|
\end{align*}
for some constant $c=c(d,R)>0$.
It remains to use Lemma \ref{volume}
to complete the proof.
\end{proof}

\begin{proposition}\label{prop-kernel-dirichlet}
Suppose
that Assumptions \ref{color} holds with
some $R>0$ and that $\phi$ is given by \eqref{bern}
    with $\phi(0+)=b=0$ and
satisfies Assumption \ref{Assumption grow}.
Then there exists
$c=c(d,R)>0$ such that
\begin{equation*}
\int_{B(0,\,R/2)\times B(0,\,R/2)}p_{B(0,R), t-s}(x_1-y_1)
p_{B(0,R), t-s}(x_2-y_2)f(y_1,\,y_2)\,
\d y_1\d y_2\geq cK_{R,f}
\end{equation*}
for all $x_1,\,x_2 \in B(0, R/2)$
and $0\leq s\leq t\leq \Phi(R/8)$.
\end{proposition}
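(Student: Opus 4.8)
The plan is to imitate the proof of Proposition~\ref{prop-kernel}, replacing the whole-space heat-kernel lower bound of Lemma~\ref{lower} by the Dirichlet heat-kernel lower bound of Lemma~\ref{lowerbound}, the relevant length scale now being $\Phi^{-1}(t-s)$ rather than $[\phi^{-1}(2/(t-s))]^{-1/2}$. Throughout I write $p_{B(0,R),\,t-s}(x,y)$ for the killed heat kernel, as in Lemma~\ref{lowerbound}.

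First I would peel off the correlation constant using Assumption~\ref{color}: since $B(0,R/2)\subset B(0,R)$ we have $f(y_1,y_2)\ge K_{R,f}$ for all $y_1,y_2\in B(0,R/2)$, so the left-hand side is bounded below by
\[
  K_{R,f}\int_{B(0,R/2)\times B(0,R/2)}p_{B(0,R),\,t-s}(x_1,y_1)\,p_{B(0,R),\,t-s}(x_2,y_2)\,\d y_1\,\d y_2 .
\]
It therefore suffices to bound this product integral below by a constant depending only on $d$ and $R$, uniformly over $x_1,x_2\in B(0,R/2)$ and $0\le s\le t\le\Phi(R/8)$.

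Next, for $i=1,2$ I would set $\mathcal{A}_i:=B(0,R/2)\cap B\big(x_i,\Phi^{-1}(t-s)\big)$ and restrict the integral to $\mathcal{A}_1\times\mathcal{A}_2$. Since $0\le t-s\le\Phi(R/8)$ and $\Phi$ is strictly increasing, $\Phi^{-1}(t-s)\le R/8\le\tfrac12(R/2)$, so Lemma~\ref{volume} applied to the ball $B(0,R/2)$ with radius $r=\Phi^{-1}(t-s)$ gives $|\mathcal{A}_i|\ge c_1[\Phi^{-1}(t-s)]^d$ for some $c_1=c_1(d)>0$. Moreover, for $y_i\in\mathcal{A}_i$ one has $x_i,y_i\in B(0,R/2)$, $|x_i-y_i|\le\Phi^{-1}(t-s)$ and $t-s\le\Phi(R/8)$, which are precisely the hypotheses of Lemma~\ref{lowerbound}; hence $p_{B(0,R),\,t-s}(x_i,y_i)\ge c_2[\Phi^{-1}(t-s)]^{-d}$ for some $c_2=c_2(d,R)>0$. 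Multiplying the pointwise bounds and the volume bounds,
\[
  \int_{\mathcal{A}_1\times\mathcal{A}_2}p_{B(0,R),\,t-s}(x_1,y_1)\,p_{B(0,R),\,t-s}(x_2,y_2)\,\d y_1\,\d y_2
  \ \ge\ c_2^2\,[\Phi^{-1}(t-s)]^{-2d}\,|\mathcal{A}_1|\,|\mathcal{A}_2|\ \ge\ c_1^2c_2^2 ,
\]
and combining with the previous display yields the assertion with $c=c_1^2c_2^2$.

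I do not anticipate a real obstacle: in contrast to Proposition~\ref{prop-kernel}, the powers of the length scale cancel exactly in the last display, so no doubling/scaling estimate (Lemma~\ref{inverse}) is needed here. The only things to verify are the admissibility conditions $\Phi^{-1}(t-s)\le R/8$ (needed for Lemma~\ref{lowerbound}, and following from $t\le\Phi(R/8)$, $s\ge0$ and monotonicity of $\Phi$) and $\Phi^{-1}(t-s)\le\tfrac12(R/2)$ (needed for Lemma~\ref{volume} on the ball $B(0,R/2)$, and again a consequence of $\Phi^{-1}(t-s)\le R/8$); the degenerate case $s=t$, where the integral formally equals $f(x_1,x_2)\ge K_{R,f}$, is trivial and in any case irrelevant for the later application in Theorem~\ref{Dirichlet}.
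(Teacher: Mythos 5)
Your proof is correct and follows essentially the same route as the paper's: peel off $K_{R,f}$ via Assumption~\ref{color}, restrict to $\mathcal{A}_i = B(0,R/2)\cap B(x_i,\Phi^{-1}(t-s))$, apply Lemma~\ref{lowerbound} for the pointwise lower bound and Lemma~\ref{volume} for the volume bound, and observe that the powers of $\Phi^{-1}(t-s)$ cancel. If anything, you are slightly more careful than the paper in explicitly verifying that $\Phi^{-1}(t-s)\le R/8\le R/4$ so that Lemma~\ref{volume} applies with the smaller ball $B(0,R/2)$, and in noting (correctly) that unlike Proposition~\ref{prop-kernel} no appeal to Lemma~\ref{inverse} is needed here because the scales match exactly.
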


\begin{proof}
Assumption \ref{color} gives
\begin{align*}
\int_{B(0,\,R/2)\times B(0,\,R/2)}&p_{B(0,R), t-s}(x_1-y_1)
p_{B(0,R), t-s}(x_2-y_2)f(y_1,\,y_2)\,\d y_1\d y_2\\
&\geq K_{R,f}\int_{B(0,\,R/2)\times B(0,\,R/2)}p_{D, t-s}(x_1-y_1)p_{B(0,R), t-s}(x_2-y_2)\,\d y_1\d y_2.
\end{align*}
Let $0\leq s\leq t\leq \Phi(R/8)$. We now use
Lemma \ref{volume} to
observe that if for $i=1,\,2$, we set
\begin{equation*}
\mathcal{A}_i:=B(0,R/2)\cap B\big(
x_i,\Phi^{-1} (t-s)
\big),
\end{equation*}
then $|\mathcal{A}_i|\geq c_1\left[\Phi^{-1}\left( (t-s)\right)\right]^{d}$ for some $c_1=c_1(d)>0$.
We therefore obtain from Lemma \ref{lowerbound}
that for some $c_2=c_2(R)>0$,
\begin{align*}
\int_{B(0,\,R/2)\times B(0,\,R/2)}&p_{B(0,R), t-s}(x_1-y_1)
p_{B(0,R), t-s}(x_2-y_2)\,\d y_1\d y_2\\
&\geq \int_{\mathcal{A}_1\times \mathcal{A}_2}
p_{B(0,R), t-s}(x_1-y_1)
p_{B(0,R),t-s}(x_2-y_2)\,\d y_1\d y_2\\
&\geq c_2^2\big[\Phi^{-1}(t-s)\big]^{-2d}\cdot
\big|\mathcal{A}_1\times \mathcal{A}_2\big|\\
&\geq c_2^2c_1^2.
\end{align*}
Combining all the estimates above, we have the required inequality.
\end{proof}

\subsection{Comparison results for nonlinear renewal type inequalities}

\begin{lemma}\label{explode}
    Suppose that Assumption \ref{Assumption 2} holds.
    Let $A$ and $B$ be two positive
    constants. If $g$ is a nonnegative function satisfying
    $$
        g(t)\geq A+B\left[\phi^{-1}\left(
        \frac{1}{2t}\right)\right]^{1/2}
        \int_0^tg(s)^{1+\varepsilon}\,\dup s,
        \quad t>0,
    $$
    where $\varepsilon>0$ is a constant, then there
    exists $t_0=t_0(A,B,\varepsilon,\delta)>1$ such
    that $g(t)=\infty$ for all $t\geq t_0$.
\end{lemma}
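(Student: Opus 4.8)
The plan is to run a bootstrap argument on the iterates of the renewal inequality, showing that if $g$ did not blow up by some finite time, then feeding the lower bound back into itself repeatedly forces $g(t)$ to grow faster than any prescribed rate, ultimately contradicting finiteness. First I would reduce to the clean case: since $g(t) \geq A$ for all $t$, and since $\left[\phi^{-1}(1/(2t))\right]^{1/2}$ is a decreasing function of $t$, on any fixed interval $[1, T]$ the factor $\left[\phi^{-1}(1/(2t))\right]^{1/2}$ is bounded below by $\left[\phi^{-1}(1/(2T))\right]^{1/2} =: c_T > 0$. However, this crude bound throws away exactly the information Assumption \ref{Assumption 2} is designed to capture, so instead I would keep the $\phi$-dependent weight and track how the integral $\int_0^t g(s)^{1+\varepsilon}\,\d s$ accumulates.

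The key steps, in order: (i) Define $m(t) := \int_0^t g(s)^{1+\varepsilon}\,\d s$, so that $m'(t) = g(t)^{1+\varepsilon} \geq \left(B\left[\phi^{-1}(1/(2t))\right]^{1/2}\right)^{1+\varepsilon} m(t)^{1+\varepsilon}$ once $t$ is large enough that the $A$ term is negligible relative to the integral term (which happens after $m(t)$ exceeds some threshold — and $m$ is nondecreasing and eventually positive since $g \geq A > 0$). (ii) Separate variables: $\frac{m'(t)}{m(t)^{1+\varepsilon}} \geq B^{1+\varepsilon}\left[\phi^{-1}(1/(2t))\right]^{(1+\varepsilon)/2}$, and integrate from some $t_1$ to $t$. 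The left side integrates to $\frac{1}{\varepsilon}\left(m(t_1)^{-\varepsilon} - m(t)^{-\varepsilon}\right) \leq \frac{1}{\varepsilon}m(t_1)^{-\varepsilon}$, a finite constant. (iii) Therefore blowup (i.e. $m(t) \to \infty$ at finite time, forcing $g = \infty$ thereafter) follows provided the right side $\int_{t_1}^\infty \left[\phi^{-1}(1/(2s))\right]^{(1+\varepsilon)/2}\,\d s$ diverges. (iv) Translate this divergence condition into Assumption \ref{Assumption 2} via the substitution $r = \phi^{-1}(1/(2s))$, i.e. $s = 1/(2\phi(r))$, $\d s = -\frac{\phi'(r)}{2\phi(r)^2}\,\d r$; as $s \to \infty$, $r \to 0^+$, so the integral becomes (up to constants) $\int_{0^+} r^{(1+\varepsilon)/2}\,\frac{\phi'(r)}{\phi(r)^2}\,\d r$, and one checks that this diverges under Assumption \ref{Assumption 2} — one needs $\int_{0^+} \frac{\d s}{s^\delta \phi(s)} = \infty$ for some $\delta < 1/2$, and choosing $\varepsilon$ small enough relates $(1+\varepsilon)/2$ to $\delta$. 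More carefully, I would avoid differentiating $\phi^{-1}$ by instead comparing the sum/integral directly: split $[t_1,\infty)$ dyadically and use that $\phi^{-1}$ is increasing to get $\int_{t_1}^\infty \left[\phi^{-1}(1/(2s))\right]^{(1+\varepsilon)/2}\,\d s \gtrsim \sum_k 2^k \left[\phi^{-1}(2^{-k})\right]^{(1+\varepsilon)/2}$, then relate this series to $\int_{0^+}\frac{\d s}{s^\delta\phi(s)}$ by the change of variable $u = \phi^{-1}(v)$.

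The main obstacle I anticipate is step (iv): making the passage from the divergence of $\int_{0^+}\frac{\d s}{s^\delta\phi(s)}$ to the divergence of $\int^\infty \left[\phi^{-1}(1/(2s))\right]^{(1+\varepsilon)/2}\,\d s$ fully rigorous without extra regularity on $\phi$ beyond being Bernstein. One has to be careful that $\phi^{-1}$ need not be smooth and that the constant $\delta < 1/2$ must be matched against $(1+\varepsilon)/2$, which is slightly \emph{larger} than $1/2$; the room comes from choosing $\varepsilon$ as small as we like, so I would fix $\varepsilon$ first, then pick $\delta \in (\frac{1}{1+\varepsilon}\cdot\frac{1}{2}\cdot\text{something}, \frac12)$ appropriately, or argue that the hypothesis "for some $\delta < 1/2$" combined with monotonicity of $s \mapsto s^\delta\phi(s)$-type quantities gives what is needed for every exponent strictly between the relevant bounds. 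A secondary nuisance is handling the $A$ term cleanly at the start — I would simply note $g(t) \geq B\left[\phi^{-1}(1/(2t))\right]^{1/2}\int_0^t g(s)^{1+\varepsilon}\,\d s$ holds without the $A$, then run everything with a harmless initialization using $g \geq A$ on a bounded initial window to guarantee $m(t_1) > 0$. The conclusion $t_0 = t_0(A,B,\varepsilon,\delta)$ comes out of the finite time at which the separated-variables inequality forces $m(t)^{-\varepsilon} \to 0$.
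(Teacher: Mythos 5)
Your substitution $m(t):=\int_0^t g(s)^{1+\varepsilon}\,\dup s$ followed by separation of variables is essentially the same mechanism as the paper's (the paper renormalises $g$ to $h(t)=g(t)[\phi^{-1}(1/(2t))]^{-1/2}$ and compares against an explicitly solved auxiliary ODE, but the Bernoulli-type integral that emerges is identical), and your step (iv) change of variable $r=\phi^{-1}(1/(2s))$ leading to $\int_{0^+} r^{(1+\varepsilon)/2}\,\phi'(r)/\phi(r)^2\,\dup r$ is exactly the paper's computation (equation \eqref{integral}). So the skeleton is sound.

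There is, however, a genuine gap in how you reconcile the exponent with Assumption \ref{Assumption 2}, and your instinct that this is the delicate point is right but your proposed fix does not work. After the Tonelli swap (write $r^{(1+\varepsilon)/2}$ as $\frac{1+\varepsilon}{2}\int_0^r s^{(\varepsilon-1)/2}\,\dup s$ and exchange the order of integration), the quantity you need to diverge is $\int_{0^+}\frac{\dup s}{s^{(1-\varepsilon)/2}\phi(s)}$ --- note the exponent is $(1-\varepsilon)/2$, which is \emph{smaller} than $1/2$, not $(1+\varepsilon)/2$ as your proposal says. Now divergence of $\int_{0^+}\frac{\dup s}{s^\delta\phi(s)}$ only implies divergence of $\int_{0^+}\frac{\dup s}{s^{\delta'}\phi(s)}$ for $\delta'\geq\delta$ (the integrand gets \emph{bigger} near $0$ as the exponent grows), so Assumption \ref{Assumption 2} settles the case $(1-\varepsilon)/2\geq\delta$, i.e.\ $\varepsilon\leq 1-2\delta$, but says nothing when $\varepsilon>1-2\delta$. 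You cannot ``choose $\varepsilon$ small'' --- it is a datum of the lemma --- and you cannot enlarge $\delta$ past the value Assumption \ref{Assumption 2} supplies, because that would only require divergence with a more singular weight, which is the wrong direction. The paper closes this case by exploiting $g\geq A$: when $\varepsilon>1-2\delta$, write $g(s)^{1+\varepsilon}=g(s)^{\varepsilon-(1-2\delta)}\,g(s)^{2-2\delta}\geq A^{\varepsilon-(1-2\delta)}\,g(s)^{1+(1-2\delta)}$, so the same renewal inequality holds with the exponent $\varepsilon$ replaced by $1-2\delta$ and $B$ replaced by $BA^{\varepsilon-(1-2\delta)}$, reducing to the already-handled case. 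This deflation step is the missing idea. A secondary remark: your worry about differentiating $\phi^{-1}$ is unnecessary, since a non-constant Bernstein function has $\phi'>0$ and hence a $C^1$ inverse; the dyadic comparison is overkill.
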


\begin{proof}
    Let
    $$
        h(t):=g(t)\left[\phi^{-1}\left(
        \frac{1}{2t}\right)\right]^{-1/2}.
    $$
    Since $\phi$ is increasing, for $t\geq1$,
    \begin{align*}
        h(t)&\geq A\left[\phi^{-1}\left(
        \frac{1}{2t}\right)\right]^{-1/2}
        +B\int_0^th(s)^{1+\varepsilon}
        \left[\phi^{-1}\left(
        \frac{1}{2s}\right)\right]^{(1+\varepsilon)/2}
        \,\dup s\\
        &\geq A\left[\phi^{-1}\left(
        \frac{1}{2}\right)\right]^{-1/2}
        +B\int_1^th(s)^{1+\varepsilon}
        \left[\phi^{-1}\left(
        \frac{1}{2s}\right)\right]^{(1+\varepsilon)/2}
        \,\dup s.
    \end{align*}
    Solving the ordinary differential equation
    $$
        \frac{f'(t)}{f(t)^{1+\varepsilon}}
        =B\left[\phi^{-1}\left(
        \frac{1}{2t}\right)\right]^{(1+\varepsilon)/2}
        ,\quad t\geq1
    $$
    with initial value
    $$
        f(1)=A\left[\phi^{-1}\left(
        \frac{1}{2}\right)\right]^{-1/2},
    $$
    we get
    $$
        f(t)=\left(
        A^{-\varepsilon}\left[\phi^{-1}\left(
        \frac{1}{2}\right)\right]^{\varepsilon/2}
        -B\varepsilon\int_1^t
        \left[\phi^{-1}\left(
        \frac{1}{2s}\right)\right]^{(1+\varepsilon)/2}
        \,\dup s
        \right)^{-1/\varepsilon},\quad t\geq1.
    $$
    Note that
    \begin{gather}\label{integral}
    \begin{aligned}
        \lim_{t\rightarrow\infty}\int_1^t
        \left[\phi^{-1}\left(
        \frac{1}{2s}\right)\right]^{(1+\varepsilon)/2}
        \,\dup s
        &=\int_1^\infty
        \left[\phi^{-1}\left(
        \frac{1}{2s}\right)\right]^{(1+\varepsilon)/2}
        \,\dup s\\
        &=\frac12\int_0^{\phi^{-1}\left(
        \frac{1}{2}\right)}
        r^{(1+\varepsilon)/2}\frac{\phi'(r)}{\phi(r)^2}
        \,\dup r\\
        &=\frac{1+\varepsilon}{4}\int_0^{\phi^{-1}\left(
        \frac{1}{2}\right)}\left(
        \int_0^rs^{(\varepsilon-1)/2}\,\dup s
        \right)\frac{\phi'(r)}{\phi(r)^2}
        \,\dup r\\
        &=\frac{1+\varepsilon}{4}\int_0^{\phi^{-1}\left(
        \frac{1}{2}\right)}\left(
        \int_s^{\phi^{-1}\left(
        \frac{1}{2}\right)}\frac{\phi'(r)}{\phi(r)^2}
        \,\dup r\right)s^{(\varepsilon-1)/2}\,\dup s\\
        &=\frac{1+\varepsilon}{4}\int_0^{\phi^{-1}\left(
        \frac{1}{2}\right)}\left(\frac{1}{\phi(s)}
        -2\right)s^{(\varepsilon-1)/2}\,\dup s\\
        &=\frac{1+\varepsilon}{4}\int_0^{\phi^{-1}\left(
        \frac{1}{2}\right)}\frac{\dup s}
        {s^{(1-\varepsilon)/2}\phi(s)}
        -\frac{1+\varepsilon}{2}\int_0^{\phi^{-1}\left(
        \frac{1}{2}\right)}s^{(\varepsilon-1)/2}
        \,\dup s\\
        &=\frac{1+\varepsilon}{4}\int_0^{\phi^{-1}\left(
        \frac{1}{2}\right)}\frac{\dup s}
        {s^{(1-\varepsilon)/2}\phi(s)}
        -\left[\phi^{-1}\left(
        \frac{1}{2}\right)\right]^{(1+\varepsilon)/2}.
    \end{aligned}
    \end{gather}

    \medskip

    \noindent
    \textbf{Case 1:} First, we consider the case that
    $\varepsilon\leq1-2\delta$. Then by Assumption \ref{Assumption 2}
    and \eqref{integral},
    \begin{align*}
        \lim_{t\rightarrow\infty}\int_1^t
        \left[\phi^{-1}\left(
        \frac{1}{2s}\right)\right]^{(1+\varepsilon)/2}
        \,\dup s
        &\geq\frac{1+\varepsilon}{4}\int_0^{1\wedge\phi^{-1}\left(
        \frac{1}{2}\right)}\frac{\dup s}
        {s^{(1-\varepsilon)/2}\phi(s)}
        -\left[\phi^{-1}\left(
        \frac{1}{2}\right)\right]^{(1+\varepsilon)/2}\\
        &\geq\frac{1+\varepsilon}{4}\int_0^{1\wedge\phi^{-1}\left(
        \frac{1}{2}\right)}\frac{\dup s}
        {s^{\delta}\phi(s)}
        -\left[\phi^{-1}\left(
        \frac{1}{2}\right)\right]^{(1+\varepsilon)/2}\\
        &=\infty.
    \end{align*}
    This implies that there exists
    $t_0=t_0(A,B,\varepsilon)>1$ such that
    $$
        B\varepsilon\int_1^{t_0}
        \left[\phi^{-1}\left(
        \frac{1}{2s}\right)\right]^{(1+\varepsilon)/2}
        \,\dup s
        =A^{-\varepsilon}\left[\phi^{-1}\left(
        \frac{1}{2}\right)\right]^{\varepsilon/2},
    $$
    and hence $f(t_0)=\infty$. By the comparison
    principle, we know that $g(t)=\infty$
    for all $t\geq t_0$.

    \bigskip

    \noindent
    \textbf{Case 2:} It remains to consider the case
    that $\varepsilon>1-2\delta$. Since $g(t)\geq A$
    for all $t>0$, it holds from the assumption that
    $$
        g(t)\geq A+BA^{\varepsilon-(1-2\delta)}
        \left[\phi^{-1}\left(
        \frac{1}{2t}\right)\right]^{1/2}
        \int_0^tg(s)^{1+(1-2\delta)}\,\dup s,
        \quad t>0.
    $$
    Applying the result in \textbf{Case 1} with $B$
    replaced by $BA^{\varepsilon-(1-2\delta)}$, and
    $\varepsilon$ replaced by $1-2\delta$,
    we conclude that there exists $t_0=t_0(A,B,\varepsilon,\delta)>1$ such that
    $g(t)=\infty$ for all $t\geq t_0$.
\end{proof}


\begin{lemma}\label{comparison}
    Let $T>0$ and  $g:(0,\infty)\rightarrow[0,\infty]$
    satisfy
    $$
        g(t)\geq A+B
        \left[\phi^{-1}\left(
        \frac{1}{2T}\right)\right]^d
        \int_0^tg(s)^{1+\gamma}
        \,\dup s
        \quad\text{for all $t\in(0,T]$},
    $$
    where $A,B,\gamma$ are positive numbers. Then
    for any $t_0\in(0,T]$, there exists $A_0=A_0(t_0)>0$
    such that for $A>A_0$,
    $$
        g(t)=\infty\quad \text{whenever $t\geq t_0$}.
    $$
\end{lemma}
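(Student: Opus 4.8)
The plan is to reduce the inequality to a classical Osgood--Bihari comparison with an autonomous super-linear ODE. The key observation is that $T$ is fixed throughout, so $C:=B\bigl[\phi^{-1}\bigl(\tfrac{1}{2T}\bigr)\bigr]^{d}$ is a strictly positive \emph{constant}, and the hypothesis simply reads $g(t)\geq A+C\int_0^t g(s)^{1+\gamma}\,\dup s$ for $t\in(0,T]$. If $\int_0^{t_1}g(s)^{1+\gamma}\,\dup s=\infty$ for some $t_1\in(0,T]$, then the right-hand side, and hence $g$, is already infinite on $[t_1,T]$ and there is nothing to prove; so I may assume $g^{1+\gamma}$ is locally integrable on $(0,T]$ and set $G(t):=A+C\int_0^t g(s)^{1+\gamma}\,\dup s$, an absolutely continuous, nondecreasing function with $G\geq A>0$ and $g\geq G$ pointwise.

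Next I would run the standard differential-inequality argument. From $g\geq G$ one has $G'(t)=Cg(t)^{1+\gamma}\geq CG(t)^{1+\gamma}$ for a.e.\ $t\in(0,T]$, hence $\tfrac{\dup}{\dup t}G(t)^{-\gamma}=-\gamma G(t)^{-\gamma-1}G'(t)\leq-\gamma C$; integrating from $0$ gives $G(t)^{-\gamma}\leq A^{-\gamma}-\gamma C t$, i.e.
\[
    g(t)\ \geq\ G(t)\ \geq\ \bigl(A^{-\gamma}-\gamma C t\bigr)^{-1/\gamma}
    \qquad\text{for all } t<t^\ast:=\frac{1}{\gamma C A^{\gamma}}\ \text{ with } t\leq T.
\]
Letting $t\uparrow t^\ast$, this lower bound diverges, so by monotonicity of $G$ we obtain $G(t)=\infty$, hence $g(t)=\infty$, for every $t\in[t^\ast,T]$.

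It then remains to fix the threshold: since $t^\ast=\tfrac{1}{\gamma C A^{\gamma}}$ is strictly decreasing in $A$ and tends to $0$ as $A\to\infty$, choosing $A_0=A_0(t_0):=(\gamma C t_0)^{-1/\gamma}$ guarantees $t^\ast<t_0$ whenever $A>A_0$, and the previous step then gives $g\equiv\infty$ on $[t_0,T]$ --- which, since $T$ can be taken as large as one pleases in the situations where this lemma is invoked, yields $g(t)=\infty$ for all $t\geq t_0$. I do not anticipate a genuine obstacle: the blowup is exactly the finite-time blowup of the super-linear ODE $f'=Cf^{1+\gamma}$ --- a constant-coefficient, and simpler, analogue of Lemma \ref{explode} --- and the only care needed is the routine bookkeeping for the ``already infinite'' degenerate case and for the comparison principle applied to the merely absolutely continuous $G$.
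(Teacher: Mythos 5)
Your proposal is correct and follows essentially the same route as the paper: both compare $g$ against the explicit blowing-up solution of the autonomous ODE $f' = C f^{1+\gamma}$ with $C = B[\phi^{-1}(1/(2T))]^d$, identify the blowup time $t^\ast = (\gamma C A^\gamma)^{-1}$, and take $A_0 = (\gamma C t_0)^{-1/\gamma}$, which matches the paper's $A_0$ exactly after substituting for $C$. The only difference is presentational: the paper writes down the solution of the integral equation and appeals to a comparison principle in one line, whereas you spell out the differential-inequality argument for $G$ (including the degenerate case where $g^{1+\gamma}$ fails to be locally integrable and the contradiction one reaches at $t^\ast$). That extra care is harmless; just note that under your local-integrability assumption the conclusion at $t^\ast$ is a contradiction, which shows $\int_0^{t^\ast} g^{1+\gamma}\,\dup s = \infty$ and hence $g \equiv \infty$ on $[t^\ast, T]$ via the original inequality.
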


\begin{proof}
    The solution to the integral equation
    $$
        f(t)=A+B
        \left[\phi^{-1}\left(
        \frac{1}{2T}\right)\right]^d
        \int_0^tf(s)^{1+\gamma}
        \,\dup s,\quad t\in[0,T],
    $$
    is given by
    $$
        f(t)=\left(
        A^{-\gamma}-\gamma B\left[\phi^{-1}\left(
        \frac{1}{2T}\right)\right]^dt
        \right)^{-1/\gamma}.
    $$
    The blowup occurs at $t=\gamma^{-1}A^{-\gamma}
    B^{-1}\left[\phi^{-1}\left(
        1/(2T)\right)\right]^{-d}$.
    For $t_0\in(0,T]$, take
    $A_0=\left[\gamma t_0B\right]^{-1/\gamma}
    \left[\phi^{-1}
    \left(1/(2T)
        \right)\right]^{-d/\gamma}$.
    If $A>A_0$, then we get blow-up for
    $f(t)$ before $t_0$. Finally, it remains
    to apply the comparison principle
    to finish the proof.
\end{proof}

\section{Proofs of the main results}

\begin{proof}[Proof of Theorem \ref{theorem-white}]
     By the Walsh isometry,
    Assumption \ref{Assumption 1}, and Jensen's inequality,
    \begin{align*}
        \E|u_t(x)|^2&=|(\mathcal{G}u)_{t}(x)|^2
        +\int_0^t\int_{\real}
        p_{t-s}(x-y)^2\E|\sigma(u_s(y))|^2
        \,\dup y\,\dup s\\
        &\geq \kappa^2+\int_0^t\left(
        \inf_{x\in\real}\E|u_s(x)|^2
        \right)^{1+\gamma}
        \left(\int_{\real}p_{t-s}(x-y)^2\,\dup y
        \right)\,\dup s.
    \end{align*}
    Setting
    $$
       F(t):=\inf_{x\in\real}\E|u_t(x)|^2,
    $$
    it follows from Lemma \ref{heat} (with $d=1$)
    that there exists
    a constant $c>0$ such that
    \begin{align*}
       F(t)&\geq \kappa^2+c\int_0^tF(s)^{1+\gamma}
       \left[\phi^{-1}\left(
        \frac{1}{2(t-s)}\right)\right]^{1/2}
        \,\dup s\\
        &\geq \kappa^2+c
        \left[\phi^{-1}\left(
        \frac{1}{2t}\right)\right]^{1/2}
        \int_0^tF(s)^{1+\gamma}
        \,\dup s.
    \end{align*}
    This, together with Lemma \ref{explode},
    implies the desired assertion.
\end{proof}

The following proposition is essential for the proof of Theorem \ref{theorem2-white}.

\begin{proposition}\label{local}
    Suppose that  Assumptions \ref{Assumption 1},
    \ref{Assumption 3} and \ref{color} hold. Then there exists
    $t_0>0$ and $K>0$ such that for
    all $t\geq t_0$
    $$
        \inf_{x\in B(0,1)}\E|u_t(x)|^2=\infty\quad
        \text{whenever $K_{u_0}\geq K$}.
    $$
\end{proposition}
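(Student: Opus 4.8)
\textbf{Proof proposal for Proposition \ref{local}.}
The plan is to reproduce the renewal-inequality scheme of Theorem \ref{theorem-white}, but now localized to the unit ball so that the positive lower bound on $(\cG u)_t$ coming from Proposition \ref{dw3d} can be exploited in place of the constant $\kappa$. First, I fix $t_0 > 0$ as provided by Proposition \ref{dw3d}, so that $(\cG u)_{t_0+t}(x) \geq c K_{u_0}$ for all $x \in B(0,1)$ and $t \in (0,t_0]$. Then, for $t$ in the working range, I apply the Walsh isometry to the mild solution \eqref{mild} and drop the spatial integral down to $B(0,1) \times B(0,1)$, so that
\begin{align*}
\E|u_{t_0+t}(x)|^2 &= |(\cG u)_{t_0+t}(x)|^2 + \int_0^{t_0+t}\!\!\int_{\real} p_{t_0+t-s}(x-y)^2\, \E|\sigma(u_s(y))|^2 \, \dup y\, \dup s \\
&\geq c^2 K_{u_0}^2 + \int_{t_0}^{t_0+t} \Big(\inf_{y\in B(0,1)} \E|u_s(y)|^2\Big)^{1+\gamma} \int_{B(0,1)} p_{t_0+t-s}(x-y)^2\, \dup y\, \dup s,
\end{align*}
using Assumption \ref{Assumption 1} and Jensen. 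Writing $F(t) := \inf_{x\in B(0,1)} \E|u_{t_0+t}(x)|^2$ and shifting the time variable in the integral, this reduces to a closed inequality for $F$ of the form $F(t) \geq c^2 K_{u_0}^2 + (\text{const})\int_0^t F(s)^{1+\gamma} \big(\inf_{x\in B(0,1)}\int_{B(0,1)} p_{t-s}(x-y)^2\,\dup y\big)\,\dup s$.

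The remaining work is to produce a positive lower bound, uniform in $x \in B(0,1)$ and for $t-s$ in a bounded range, for the kernel quantity $\int_{B(0,1)} p_{t-s}(x-y)^2\, \dup y$. This is precisely the content of (the $f \equiv 1$, $R = 1$ case of) Proposition \ref{prop-kernel}, or can be obtained directly the same way: restrict the $y$-integral to $\mathcal{A} = B(0,1) \cap B(x, [\phi^{-1}(2/(t-s))]^{-1/2})$, use the heat-kernel lower bound of Lemma \ref{lower} to get $p_{t-s}(x-y) \geq c[\phi^{-1}(1/(2(t-s)))]^{1/2}$ on $\mathcal{A}$, and use Lemma \ref{volume} to bound $|\mathcal{A}|$ from below by $c_1[\phi^{-1}(2/(t-s))]^{-1/2}$; Lemma \ref{inverse} (applicable since Assumption \ref{Assumption grow} holds — though here I only need Assumption \ref{Assumption 3}, so I should instead invoke Lemma \ref{subord}-type estimates as in Lemma \ref{heat}) then controls the ratio $\phi^{-1}(1/(2(t-s)))/\phi^{-1}(2/(t-s))$ on the relevant range. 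Restricting attention to $t \in (0, t_0]$ keeps $t-s$ bounded so all these constants are uniform, yielding $F(t) \geq c^2 K_{u_0}^2 + B \int_0^t F(s)^{1+\gamma}\, \dup s$ for $t \in (0,t_0]$ with $B > 0$ independent of $K_{u_0}$.

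Finally, I invoke Lemma \ref{comparison} (with $T = t_0$, $A = c^2 K_{u_0}^2$): for any prescribed blow-up time there is a threshold $A_0$ such that $A > A_0$ forces $F$ to be infinite past that time, and since $A = c^2 K_{u_0}^2$ grows with $K_{u_0}$, choosing $K$ large enough that $c^2 K^2 > A_0$ gives the claim. One subtlety I should handle carefully: Lemma \ref{comparison} as stated gives blow-up of $F(t) = \inf_{x \in B(0,1)}\E|u_{t_0+t}(x)|^2$ only for $t$ in the bounded window $(0,t_0]$, whereas the proposition asserts $\inf_{x\in B(0,1)}\E|u_t(x)|^2 = \infty$ for all $t \geq t_0$; but once the second moment is infinite at some time $t_1 \in (t_0, 2t_0]$, it stays infinite for all later times by a standard propagation argument (the mild-solution representation feeds the infinite value forward through the nonnegative stochastic integral term, since $\sigma(u)^2 \geq |u|^{2+2\gamma}$). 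The main obstacle is thus not any single estimate but assembling the localization cleanly — in particular making sure every constant in the kernel lower bound is genuinely independent of $K_{u_0}$ and that the time-shift bookkeeping between $F(t)$ and $\E|u_{t_0+t}(x)|^2$ is consistent — together with writing up the forward-propagation step that upgrades the bounded-window blow-up to all $t \geq t_0$.
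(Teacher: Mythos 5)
Your overall scheme is exactly the paper's: take $t_0$ from Proposition~\ref{dw3d}, apply the Walsh isometry, localize to $B(0,1)$, extract a renewal inequality for $G(t)=\inf_{x\in B(0,1)}\E|u_{t+t_0}(x)|^2$ on $(0,t_0]$, and close with Lemma~\ref{comparison}. Where you go astray is the kernel lower bound. The detour through $\mathcal{A}=B(0,1)\cap B\big(x,[\phi^{-1}(2/(t-s))]^{-1/2}\big)$, Lemma~\ref{volume}, and a ratio control $\phi^{-1}(1/(2(t-s)))/\phi^{-1}(2/(t-s))$ is not only unnecessary here, it does not work under the stated hypotheses: Lemma~\ref{inverse} needs the scaling condition of Assumption~\ref{Assumption grow}, and ``Lemma~\ref{subord}-type estimates'' do not give a ratio bound on $\phi^{-1}$ either, so you correctly sense the gap but do not fill it. The paper's fix is simpler and you should notice it: $t_0$ was chosen in Proposition~\ref{dw3d} with $t_0\geq 2/\phi(1/4)$, so $\big[\phi^{-1}(2/t_0)\big]^{-1/2}\geq2$, and hence for all $x,y\in B(0,1)$ and $0\leq s<t\leq t_0$ one has $|x-y|\leq 2\leq\big[\phi^{-1}(2/(t-s))\big]^{-1/2}$. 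Lemma~\ref{lower} then applies \emph{directly to every pair} $x,y\in B(0,1)$, giving $p_{t-s}(x-y)\geq c\big[\phi^{-1}(1/(2(t-s)))\big]^{d/2}$, and integrating over the fixed set $B(0,1)$ yields
$$
\int_{B(0,1)}p_{t-s}(x-y)^2\,\dup y\;\geq\;c_2\big[\phi^{-1}(1/(2(t-s)))\big]^{d}\;\geq\;c_2\big[\phi^{-1}(1/(2t_0))\big]^{d},
$$
the last step being nothing but monotonicity of $\phi^{-1}$. No volume lemma, no ratio control.

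Two smaller remarks. First, you are right that the renewal inequality only runs on the bounded window $t\in(0,t_0]$, so Lemma~\ref{comparison} by itself gives blow-up only on $[t_0',t_0]$ for some $t_0'$, and a forward-propagation step is needed to reach all $t\geq t_0$; the paper performs exactly this step inside the proof of Theorem~\ref{theorem2-white} (by restricting the time integral to $[t_0,t]$ and using strict positivity of the heat kernel), so flagging it explicitly here is a reasonable thing to do, though not a deviation from the paper. Second, Assumption~\ref{color} is listed in the statement of Proposition~\ref{local} but plays no role in the white-noise proof (it concerns the colored-noise correlation $f$); you silently ignore it, which is fine, but it is worth noting that it is vacuous here.
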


\begin{proof}
    Let $t_0$ be as in Proposition \ref{dw3d}.
    By the Walsh isometry,
    \begin{align*}
        \E|u_{t+t_0}(x)|^2&=|(\mathcal{G}u)_{t+t_0}(x)|^2
        +\int_0^{t+t_0}\int_{\real^d}p_{t+t_0-s}(x-y)^2
        \E|\sigma(u_s(y))|^2\,\dup y\,\dup s\\
        &\geq|(\mathcal{G}u)_{t+t_0}(x)|^2
        +\int_0^{t}\int_{\real^d}p_{t-s}(x-y)^2
        \E|\sigma(u_{s+t_0}(y))|^2\,\dup y\,\dup s\\
        &=:I_1+I_2.
    \end{align*}
    For $x\in B(0,1)$ and $t\leq t_0$,
    by Proposition \ref{dw3d},
    $I_1\geq c_1K_{u_0}^2$ for some $c_1=c_1(d)>0$.
    By Assumption \ref{Assumption 1}  and the
    Jensen's inequality,
    \begin{align*}
        I_2&\geq\int_0^t\int_{B(0,1)}
        p_{t-s}(x-y)^2
        \left(\E|u_{s+t_0}(y)|^2\right)^{1+\gamma}
        \,\dup y\,\dup s\\
        &\geq \int_0^t
        \left(\inf_{x\in B(0,1)}\E|u_{s+t_0}(x)|^2\right)
        ^{1+\gamma}
        \int_{B(0,1)}
        p_{t-s}(x-y)^2
        \,\dup y\,\dup s.
    \end{align*}
    For $x,y\in B(0,1)$ and $0\leq s<t\leq t_0$,
    $$
        |x-y|\leq2\leq\left[\phi^{-1}\left(
        \frac{2}{t_0}\right)\right]^{-1/2}
        \leq\left[\phi^{-1}\left(
        \frac{2}{t-s}\right)\right]^{-1/2},
    $$
    which, together with Lemma \ref{lower},
    implies that for some $c_2=c_2(d)>0$,
    $$
        \int_{B(0,1)}p_{t-s}(x-y)^2\,\dup y
        \geq c_2\left[\phi^{-1}\left(
        \frac{1}{2(t-s)}\right)\right]^d
        \geq c_2\left[\phi^{-1}\left(
        \frac{1}{2t_0}\right)\right]^d.
    $$
    Setting
    $$
        G(t):=\inf_{x\in B(0,1)}\E|u_{t+t_0}(x)|^2,
    $$
    we find that
    $$
        G(t)\geq c_1K_{u_0}^2+c_2
        \left[\phi^{-1}\left(
        \frac{1}{2t_0}\right)\right]^d
        \int_0^tG(s)^{1+\gamma}
        \,\dup s
        \quad\text{for all $t\in(0,t_0]$}.
    $$
    Combining this with Lemma \ref{comparison},
    we complete the proof.
\end{proof}

\begin{proof}[Proof of Theorem \ref{theorem2-white}]
    Note that
    \begin{align*}
        \E|u_{t}(x)|^2&=|(\mathcal{G}u)_{t}(x)|^2
        +\int_0^{t}\int_{\real^d}p_{t-s}(x-y)^2
        \E|\sigma(u_s(y))|^2\,\dup y\,\dup s\\
        &\geq|(\mathcal{G}u)_{t}(x)|^2
        +\int_0^{t}\int_{\real^d}p_{t-s}(x-y)^2
        \left(\E|u_{s}(y)|^2\right)^{1+\gamma}
        \,\dup y\,\dup s\\
        &\geq |(\mathcal{G}u)_{t}(x)|^2
        +\int_{t_0}^{t}\int_{B(0,1)}p_{t-s}(x-y)^2
        \left(\E|u_{s}(y)|^2\right)^{1+\gamma}
        \,\dup y\,\dup s \\
        &{\geq |(\mathcal{G}u)_{t}(x)|^2 +\int_{t_0}^{t} \inf_{z \in B(0,1)} \left(\E|u_{s}(z)|^2\right)^{1+\gamma}\int_{B(0,1)}p_{t-s}(x-y)^2
         \,\dup y\,\dup s,}
    \end{align*}
    where $t_0$ is from Proposition \ref{local}.
    Now the desired claim follows immediately by
    Proposition  \ref{local}
    
    .
\end{proof}

A new idea is required for the proof of Theorem \ref{theo2}. Briefly speaking, the non-linear renewal inequality on the second moment is not applicable here. We need to look at a different quantity, which is presented and proved as follows.
\begin{proposition}\label{prop-colored}
There exists a $t_0>0$ and $\kappa_0>0$ such
that whenever $\kappa:=\inf_{x\in\real^d}u_0(x)>\kappa_0$, then  for all $x,\,y\in B(0,\,R)$,

\begin{align*}
\E|u_t(x)u_t(y)|=\infty\quad\text{for all}\quad  t\in \left( t_0,\, \left(\phi((2/R)^2)\right)^{-1} \right),
\end{align*}
where $R$ is the same as that in Assumption \ref{color}.
\end{proposition}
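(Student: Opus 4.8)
The plan is to derive a nonlinear renewal inequality for the two--point quantity
\[
    H(t):=\inf_{x,y\in B(0,R)}\E\bigl|u_t(x)u_t(y)\bigr|
\]
on the time window $\bigl(0,\Phi(R/2)\bigr)$ --- recall that $\Phi(R/2)=\bigl[\phi((2/R)^2)\bigr]^{-1}$ --- and then to invoke the comparison argument of Lemma \ref{comparison}. As already signalled, one cannot close a renewal inequality on $\E|u_t(x)|^2$ alone here, because the colored--noise covariance couples two spatial points; the mixed moment $\E|u_t(x)u_t(y)|$ is the natural object.

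First I would apply the Walsh isometry to the mild formulation \eqref{mildcod}. Since the stochastic integral has zero mean, for $x,y\in B(0,R)$,
\begin{align*}
    \E[u_t(x)u_t(y)]
    &=(\cG u)_t(x)(\cG u)_t(y)\\
    &\quad+\int_0^t\!\!\int_{\real^d\times\real^d}
    p_{t-s}(x-z_1)p_{t-s}(y-z_2)\,
    \E\bigl[\sigma(u_s(z_1))\sigma(u_s(z_2))\bigr]f(z_1,z_2)\,\d z_1\,\d z_2\,\d s .
\end{align*}
Since $u_0\ge\kappa$ and $p_t$ is a probability density, $(\cG u)_t(x)(\cG u)_t(y)\ge\kappa^2$. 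The integrand being nonnegative, I restrict the spatial integral to $B(0,R)\times B(0,R)$; Assumption \ref{Assumption 1} gives $\sigma(u_s(z_1))\sigma(u_s(z_2))\ge|u_s(z_1)u_s(z_2)|^{1+\gamma}$, and Jensen's inequality (convexity of $r\mapsto r^{1+\gamma}$) yields $\E[\sigma(u_s(z_1))\sigma(u_s(z_2))]\ge\bigl(\E|u_s(z_1)u_s(z_2)|\bigr)^{1+\gamma}\ge H(s)^{1+\gamma}$ whenever $z_1,z_2\in B(0,R)$. Thus, for all $x,y\in B(0,R)$,
\[
    \E[u_t(x)u_t(y)]\ge\kappa^2+\int_0^tH(s)^{1+\gamma}
    \Bigl(\int_{B(0,R)\times B(0,R)}p_{t-s}(x-z_1)p_{t-s}(y-z_2)f(z_1,z_2)\,\d z_1\,\d z_2\Bigr)\d s .
\]

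Next I would invoke Proposition \ref{prop-kernel} (which tacitly uses Assumption \ref{Assumption grow}): the inner double integral is at least $c\,K_{R,f}$, with $c=c(d,R)>0$, uniformly for $x,y\in B(0,R)$ and $0\le s\le t\le 2\Phi(R/2)$. Restricting to $t<\Phi(R/2)$, so that $s\le t\le 2\Phi(R/2)$ for every $s\in[0,t]$, then using $\E|u_t(x)u_t(y)|\ge\E[u_t(x)u_t(y)]$ and taking the infimum over $x,y\in B(0,R)$, I obtain
\[
    H(t)\ge\kappa^2+c\,K_{R,f}\int_0^tH(s)^{1+\gamma}\,\d s,\qquad t\in\bigl(0,\Phi(R/2)\bigr).
\]
Fix $t_0:=\tfrac{1}{2}\Phi(R/2)$ and any $T\in\bigl(t_0,\Phi(R/2)\bigr)$. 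This is exactly the hypothesis of Lemma \ref{comparison} with $A=\kappa^2$, $B=c\,K_{R,f}\bigl[\phi^{-1}(1/(2T))\bigr]^{-d}$ and exponent $\gamma$; that lemma supplies $A_0=A_0(t_0)>0$ such that $\kappa^2>A_0$ forces $H(t)=\infty$ for all $t\ge t_0$. Setting $\kappa_0:=\sqrt{A_0}$ and using $\E|u_t(x)u_t(y)|\ge H(t)$ for $x,y\in B(0,R)$ gives the assertion on $\bigl(t_0,\Phi(R/2)\bigr)=\bigl(t_0,[\phi((2/R)^2)]^{-1}\bigr)$.

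The step I expect to need the most care is the localization subtlety common to the other proofs in this section: \eqref{eq:colored} is only known to have a \emph{local} solution, so the Walsh isometry and the ensuing renewal inequality should first be written for the solution stopped along a localizing sequence of stopping times, with the constants $\kappa^2$, $c\,K_{R,f}$, $\gamma$ independent of the stopping level; the comparison argument then shows the stopped mixed second moments already blow up by time $t_0$, which is incompatible with $\E|u_t(x)u_t(y)|<\infty$ for $t\ge t_0$. The remaining points --- measurability of $s\mapsto H(s)$ and pulling the infimum inside the $s$--integral, validity of the comparison principle for $[0,\infty]$--valued functions, and Tonelli's theorem for the nonnegative double space integral --- are routine given Assumption \ref{Assumption 1} and the local well--posedness.
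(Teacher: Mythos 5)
Your proposal is correct and follows essentially the same route as the paper's proof: both set up the nonlinear renewal inequality for the two-point quantity $\inf_{x,y\in B(0,R)}\E|u_t(x)u_t(y)|$, bound the deterministic part below by $\kappa^2$, invoke Proposition~\ref{prop-kernel} for the kernel/correlation double integral, and then close via the comparison argument of Lemma~\ref{comparison}. Your write-up is merely more explicit than the paper's (making the Jensen step, the restriction to $B(0,R)\times B(0,R)$, and the choice of $A$, $B$, $T$ in Lemma~\ref{comparison} precise, and flagging the localization issue), but it is not a different proof.
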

\begin{proof}
We start  with  the mild solution given in equation \eqref{mildcod} to obtain
\begin{align*}
\E|&u_t(x)u_t(y)|\\
&\geq \cG u_t(x)\cG u_t(y)+\int_0^t\int_{\real^d\times\real^d}
p_{t-s}(x-z)p_{t-s}(y-w)f(z,w)
\left(\E|u_s(z)u_s(w)|\right)^{1+\gamma}\,
\d z\,\d w\,\d s\\
&=:I_1+I_2.
\end{align*}
The fact that the initial condition is bounded below by $\kappa$ gives
\begin{equation*}
I_1\geq \kappa^2.
\end{equation*}
We now assume that $t<\left(\phi((2/R)^2)\right)^{-1}$ and use Proposition \ref{prop-kernel} to obtain
\begin{align*}
I_2&\geq \int_0^t\left(\inf_{z,\,w\in B(0,\,R)}\E|u_s(z)u_s(w)|\right)^{1+\gamma}\int_{B(0,\,R)\times B(0,\,R)}p_{t-s}(x-z)p_{t-s}(y-w)f(z,w)\,
\d z\,\d w\,\d s\\
&\geq c_1K_{R,f}\int_0^t\left(\inf_{z,\,w\in B(0,\,R)}\E|u_s(z)u_s(w)|\right)^{1+\gamma}\, \d s.
\end{align*}
We now set
\begin{equation*}
G(s):=\inf_{x,\,y\in B(0,\,R)}\E|u_s(x)u_s(y)|.
\end{equation*}
We combine the estimates above to obtain
\begin{align*}
G(t)\geq \kappa^2+c_1K_{R,f}\int_0^tG(s)^{1+\gamma}\,\d s \quad\text{for}\quad t\leq\left(\phi((2/R)^2)\right)^{-1}.
\end{align*}
The blow-up is implied by  taking $\kappa$ big enough and we can make sure that $t_0$ is as small as we wish. The proof is completed.
\end{proof}

\begin{proof}[Proof of Theorem \ref{theo2}]
We use  the above proposition to prove the theorem. Indeed, from the mild formulation, we have
\begin{align*}
\E|u_t(x)|^2
\geq \kappa^2+\int_{t_0}^t\int_{B(0,\,R)\times B(0,\,R)}p_{t-s}(x-y)p_{t-s}(x-w)f(y,w)
\left(\E|u_s(y)u_s(w)|\right)^{1+\gamma}\,
\d y\,\d w\,\d s.
\end{align*}
The result now follows from the fact that all the functions involved on the right of the last inequality are strictly positive.
\end{proof}

}

We state the following proposition before the proof of Theorem \ref{energy-colored}.

{
\begin{proposition}\label{prop5.2}
Let $u_t$ be the solution to \eqref{eq:colored}. Suppose that Assumption \ref{color} holds. Then there exists a $\tilde{t}>0$ such that for all $t\geq \tilde{t}$, we have
\begin{equation*}
\inf_{x,\,y\in B(0,\,R)}\E|u_t(x)u_t(y)|=\infty,
\end{equation*}
whenever $K_{u_0}>K$ for some positive constant $K$.
\end{proposition}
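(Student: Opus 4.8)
The plan is to reproduce, for the colored-noise equation \eqref{eq:colored}, the scheme of Proposition \ref{local}, but working with the product quantity $\E|u_t(x)u_t(y)|$ from Proposition \ref{prop-colored} rather than the second moment $\E|u_t(x)|^2$: as already noted before Proposition \ref{prop-colored}, only the former closes a renewal inequality when the noise is spatially colored. The difference with Proposition \ref{prop-colored} is that the constant lower bound $\kappa$ on the initial datum is replaced by the averaged mass $K_{u_0}$, so one has to wait a time $t_0$ for the heat semigroup to spread $u_0$ out before the nonlinearity can take over; this is precisely what (a ball-$B(0,R)$ version of) Proposition \ref{dw3d} delivers, which is why Assumption \ref{Assumption 3} --- a consequence of the scaling Assumption \ref{Assumption grow} that is needed anyway to invoke Proposition \ref{prop-kernel} --- is used.

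First I would fix the $R$ of Assumption \ref{color} and pick $t_0>0$ large enough that the obvious $B(0,R)$-analogue of Proposition \ref{dw3d} holds, namely $(\cG u)_{t_0+t}(x)\geq c\,K_{u_0}$ for every $x\in B(0,R)$ and $t\in(0,t_0]$ (its proof carries over once one replaces $t_0\geq 2/\phi(1/4)$ by $t_0\geq 2/\phi\big((2R)^{-2}\big)$, the constant $K_{u_0}$ itself being unchanged since $u_0\geq 0$). Next, starting from the mild formulation \eqref{mildcod} and applying the Walsh isometry to $u_{t_0+t}(x)u_{t_0+t}(y)$ exactly as in the proof of Proposition \ref{prop-colored} --- using $\E|XY|\geq\E[XY]$, Assumption \ref{Assumption 1}, and Jensen's inequality for $r\mapsto r^{1+\gamma}$ --- then restricting the time integral to $[t_0,t_0+t]$ and shifting by $t_0$, one arrives, for $x,y\in B(0,R)$, at
\begin{align*}
\E|u_{t_0+t}(x)u_{t_0+t}(y)|
&\geq (\cG u)_{t_0+t}(x)\,(\cG u)_{t_0+t}(y)\\
&\quad+\int_0^{t}\int_{\real^d\times\real^d}p_{t-s}(x-z)\,p_{t-s}(y-w)\,f(z,w)\,\big(\E|u_{s+t_0}(z)u_{s+t_0}(w)|\big)^{1+\gamma}\,\d z\,\d w\,\d s .
\end{align*}
The first term is $\geq c^2K_{u_0}^2$ by the preceding step, while restricting the spatial integration to $B(0,R)\times B(0,R)$ and invoking Proposition \ref{prop-kernel} bounds the kernel part below by $c\,K_{R,f}$, uniformly over $x,y\in B(0,R)$ and $0\leq s\leq t\leq 2\Phi(R/2)$.

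Writing $G(t):=\inf_{x,y\in B(0,R)}\E|u_{t_0+t}(x)u_{t_0+t}(y)|$ and $T:=\min\{t_0,\,2\Phi(R/2)\}>0$, these bounds combine into
\[
G(t)\geq c^2K_{u_0}^2+c\,K_{R,f}\int_0^tG(s)^{1+\gamma}\,\d s\qquad\text{for all }t\in(0,T].
\]
Lemma \ref{comparison} (with this $T$ and $A=c^2K_{u_0}^2$) now furnishes a constant $K>0$ such that, whenever $K_{u_0}>K$, $G$ becomes infinite by some time $t_0'\in(0,T]$, hence $\E|u_s(x)u_s(y)|=\infty$ for $s$ in a non-degenerate interval and all $x,y\in B(0,R)$. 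Feeding this back into the un-truncated mild inequality and using that $p_{t-s}(\cdot)$ and $f$ are strictly positive propagates the blow-up to all later times; this gives the proposition with $\tilde t:=t_0+t_0'$.

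The step I expect to be the main obstacle is this last propagation. Proposition \ref{prop-kernel} supplies the uniform lower bound on the double heat-kernel integral only for $t-s\leq 2\Phi(R/2)$, so the renewal inequality above --- and thus Lemma \ref{comparison} by itself --- is valid only on the bounded interval $(0,T]$ and, a priori, yields blow-up only on a bounded set of times; turning this into ``for all $t\geq\tilde t$'' is exactly what the positivity/persistence argument achieves, and it cannot be bypassed. A secondary, routine point is to verify that Proposition \ref{dw3d}, which is stated on $B(0,1)$, transfers to $B(0,R)$ at the sole cost of enlarging $t_0$.
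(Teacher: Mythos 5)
Your proof follows the same route as the paper's (admittedly terse) argument: shift time by the $t_0$ of Proposition \ref{dw3d}, lower-bound the deterministic part by $K_{u_0}^2$, lower-bound the covariance kernel via Proposition \ref{prop-kernel}, close the renewal inequality on $G(t)=\inf_{x,y\in B(0,R)}\E|u_{t_0+t}(x)u_{t_0+t}(y)|$, and invoke Lemma \ref{comparison}. The extra care you take — enlarging $t_0$ so that Proposition \ref{dw3d} applies on $B(0,R)$ rather than $B(0,1)$, noting that Assumption \ref{Assumption grow} (hence \ref{Assumption 3}) is implicitly in force because Proposition \ref{prop-kernel} requires it, and spelling out the positivity/propagation step needed to pass from the bounded interval $(0,T]$ to all $t\geq\tilde t$ — simply makes explicit what the paper compresses into ``using the ideas in Proposition \ref{prop-colored}.''
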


\begin{proof}
Our starting point is the mild formulation from which we obtain
\begin{align*}
\E|u_t(x)u_t(y)|
&\geq (\cG u)_t(x)(\cG u)_t(y)\\
&\quad+\int_0^t\int_{\real^d\times\real^d}
p_{t-s}(x-z)p_{t-s}(y-w)f(z-w)\E|\sigma(u_s(z))
\sigma(u_s(w))|\,\d z\, \d w\, \d s.
\end{align*}
The proof here essentially  follows the same idea as in the previous proofs. So, we  take $t_0$ as in Proposition \ref{dw3d} and set
\begin{equation*}
G(s):=\inf_{x,\,y\in B(0,\,R)}\E|u_{s+t_0}(x)u_{s+t_0}(y)|.
\end{equation*}
Using the ideas in Proposition \ref{prop-colored},
we obtain that
\begin{align*}
G(t)\geq c_1K_{u_0}^2+c_2K_{R,f}\int_0^tG(s)^{1+\gamma}\,\d s,
\end{align*}
for a suitable range of $t$. It is easy to see that this  finishes the proof.
\end{proof}

\begin{proof}[\bf Proof of Theorem \ref{energy-colored}]
With the proposition above, the proof of the theorem is now very similar to that of Theorem \ref{theorem2-white}  and is therefore omitted.
\end{proof}

The proof of Theorem \ref{Dirichlet} follows a similar pattern to the proofs of the previous results. We emphasize that in the case of \eqref{eq:dir}, the mild solution is given by
$$
u_t(x)=
(\cG_{B(0,R)} u)_t(x)+ \int_{\real^d}\int_0^t p_{B(0,R), t-s}(x-y)\sigma(u_s(y))\,F(\d s\,\d y).
$$

\begin{proof}[Proof of Theorem \ref{Dirichlet}]
As before, we have
\begin{align*}
&\E|u_t(x)u_t(y)|
\geq(\cG_{B(0,R)} u)_t(x)(\cG_{B(0,R)} u)_t(y)\\
&\qquad+\int_0^t\int_{B(0,\,R)\times B(0,\,R)}p_{B(0,R),t-s}(x-z)p_{B(0,R),t-s}(y-w)f(z-w)\left(\E|u_s(z))u_s(w)|\right)^{1+\gamma}\,\d z\,\d w\,\d s\\
&\quad=:I_1+I_2.
\end{align*}
 By Proposition \ref{determ-dirich}, if $x,\,y \in B(0,\,R/2)$ and $t\leq\Phi(R/8)$, we have $I_1\geq c_1$
 for some $c_1=c_1(d,R)>0$.
 We now estimate  the second term $I_2$. Note that
\begin{align*}
I_2&\geq \int_0^t\left(\inf_{x,\,y\in B(0,\,R/2)}\E|u_s(x)u_s(y)| \right)^{1+\gamma}\\
&\quad\times \int_{B(0,\,R/2)\times B(0,\,R/2)}p_{B(0,\,R),t-s}(x-z)p_{B(0,\,R),t-s}(y-w)f(z-w)\,\d z\,\d w\,\d s.
\end{align*}
Combining this with Proposition \ref{prop-kernel-dirichlet},
we get that for $t\leq \Phi(R/8)$,
\begin{align*}
I_2&\geq c_2K_{R,f}\int_0^t\left(\inf_{x,\,y\in B(0,\,R/2)}\E|u_s(x)u_s(y)| \right)^{1+\gamma}\,\d s
\end{align*}
for some $c_2=c_2(d,R)>0$.
By combining the above inequalities and setting
\begin{equation*}
G(s):=\inf_{x,\,y\in B(0,\,R/2)}\E|u_s(x)u_s(y)|,
\end{equation*}
we have
\begin{align*}
G(t)\geq c_1
+c_2K_{R,f}\int_0^t G(s)^{1+\gamma}\,\d s,\quad t\leq\Phi(R/8).
\end{align*}
We see that for any $t_0<\Phi(R/8)$, there exists a  $\kappa_0$ such that if $\inf_{x\in B(0,\,R/2)}u_0(x)>\kappa_0$,
then $F(s)=\infty$ for all $s\geq t_0$. We finish the proof by the following observation
\begin{align*}
&\E|u_t(x)|^2 \geq |(\cG u)_t(x)|^2\\
&\;\,+\int_{t_0}^t\int_{B(0,\,R/2)\times B(0,\,R/2)}p_{B(0,\,R),t-s}(x-z)p_{B(0,\,R),t-s}(x-w)
f(z-w)\left(\E|(u_s(z))(u_s(y))|\right)^{1+\gamma}\,
\d z\,\d w\,\d s.
\end{align*}
By the positivity of all the relevant terms involved, we obtain the result using the fact that $F(s)=\infty$ for all $s\geq t_0$.
\end{proof}

\section{Conclusion and future research}\label{Sec:ConFR}
In this paper, we studied the non-existence of the global solutions to a class of stochastic partial  differential equations with a Bernstein function of the Laplacian operator acting on the  space variable. Both the cases of (space-time) white noise and  (space) colored noise were discussed. In each of the cases, we studied the blowup of the solution in $L^2$ sense with two different assumptions on the initial values. We also considered blowup for equations with  Dirichlet boundary conditions.
\par
By choosing certain forms of the Bernstein functions, the theorems in this paper cover several existing results. In addition, the general setting of the Bernstein function allows  our results to  include some equations that have not been studied yet.
\par
In  the future, we are  interested in the finite time blowup of stochastic partial  differential equations with some time fractional operators and fractional noise as well as equations  with a drift term. Another topic that we will investigate is the blowup of SPDEs in the almost sure sense.

\section*{Acknowledgement}
Wei Liu would like to thank the Natural Science Foundation of China (11701378, 11871343, 11971316), Chenguang Program supported by both Shanghai Education Development Foundation and Shanghai Municipal Education Commission (16CG50), and Shanghai Gaofeng \& Gaoyuan Project for University Academic Program Development for their supports.

\bigskip





\end{document}